\documentclass[12pt]{amsart}%
\usepackage{amsfonts}
\usepackage{amsmath}
\usepackage{amssymb}
\usepackage{amsthm}
\usepackage{mathrsfs}
\usepackage{graphicx}%
\usepackage{amsmath}
\usepackage{setspace}
\usepackage{hyperref}
\usepackage{cite}
\usepackage{amsfonts}
\usepackage{algorithmic}
\usepackage{textcomp}
\newcommand{\R}{\mathbb R}
\allowdisplaybreaks[4]
\makeatletter
\@addtoreset{equation}{section}
\makeatother

\newtheorem{theorem}{Theorem}[section]

\newtheorem{definition}[theorem]{Definition}

\newtheorem{lemma}[theorem]{Lemma}

\newtheorem{proposition}[theorem]{Proposition}

\makeatletter
\def\@makefnmark{}
\makeatother

\begin{document}

\title[$C^{\infty}$ regularity of the Alt-Phillips Functional  for negative powers]{$C^{\infty}$ regularity of the Alt-Phillips Functional  for negative powers}

\author{Lu Chen}
\address[Lu Chen]{Key Laboratory of Algebraic Lie Theory and Analysis of Ministry of Education, School of Mathematics and Statistics, Beijing Institute of Technology, Beijing
100081, PR China;  Tangshan Research Institute, Beijing Institute of Technology, Tangshan 063000, PR China}
\email{chenlu5818804@163.com}

\author{Jiali Lan}
\address[Jiali Lan]{Key Laboratory of Algebraic Lie Theory and Analysis of Ministry of Education, School of Mathematics and Statistics, Beijing Institute of Technology, Beijing
100081, PR China}
\email{17636268505@163.com}

\author{Yong Wu}
\address[Yong Wu]{Key Laboratory of Algebraic Lie Theory and Analysis of Ministry of Education, School of Mathematics and Statistics, Beijing Institute of Technology, Beijing
100081, PR China}
\email{3120251484@bit.edu.cn}

\address{}

\keywords{Alt Phillips; negative power; $C^{\infty}$ regularity; free boundary problem.}
%\thanks{$*$ Corresponding author.}
\thanks{The first author was partly supported by the  National Natural Science Foundation of China (No. 12271027) and Hebei Natural
Science Foundation (No. A2025105003). }

\begin{abstract}
 In this paper, we study  the regularity of the free boundary for minimizers of the Alt-Phillips functional with negative powers
\[\mathcal{E}_{\gamma}(u)=\int_{\Omega}\frac{1}{2}|\nabla u|^2+\frac{1}{\gamma}u^{-\gamma}\chi_{\{u>0\}}dx,\quad\gamma\in(0,2).\]
We proved that the free boundaries are $C^{\infty}$ at regular points. A key technical tool is the linearized operator for the PDE satisfied by the partial derivatives of a solution  to the Alt-Phillips Euler-Lagrange equation in the negative power case. For this operator we establish a comparison principle, which may have further applications to the Alt-Phillips problem with negative powers.
\end{abstract}

\maketitle

 \section{Introduction and Main Results}
 One fundamental problem  in the calculus of variations consists in studying the minimizers of energy functionals
 \[J(u,\Omega):=\int_{\Omega}\frac{1}{2}|\nabla u|^2+ W(u)dx,\]
  prescribed with the boundary condition
  \[u=\varphi\geq0\quad\text{on }\partial\Omega.\]
 The potential $W:\mathbb{R}\rightarrow[0,\infty)$  is usually assumed to be non-decreasing with  minimum 0, which guarantee that minimizers of $J$ are non-negative and satisfy the following Euler-Lagrange equation
\begin{equation}\label{e1.5}\Delta u=W'(u)\quad\text{in }\Omega.\end{equation}
From the strong maximum principle, the solutions of \eqref{e1.5} must be positive strictly in the interior of $\Omega$ whenever $W$ is of class $C^{1,1}$ at the origin.  An important example is the Allen-Cahn  energy given by the double-well potential
 \[W(t)=(1-t^2)^2,\]
 which is relevant in the theory of phase-transitions and minimal surfaces. In \cite{MM}, Modica and Mortola showed that $0$-homogenous rescalings
of bounded minimizers $|u|\leq 1$  converge up to subsequences to a $\pm1$ configuration separated by a minimal surface, i.e.
 \[u_{\varepsilon}(x)=u\left(\frac{x}{\varepsilon}\right)\rightarrow \chi_{E}-\chi_{E^c}\quad\text{in }L_{loc}^1\]
 as $\varepsilon\rightarrow0$, with $E$ a set of minimal perimeter. At the level of the energy, this result is expressed in terms of the $\Gamma$-convergence of the rescaled energies
\[\int_{\Omega}\varepsilon |\nabla u|^2+\frac{1}{\varepsilon}W(u)dx\]
 to a multiple of the perimeter functional $c_0 P_{\Omega}(E)$. The Modica-Mortola Theorem is a significant result in the calculus of variations with numerous applications, particularly in the study of phase transitions. It provides a key prediction for physical reality: within a rigid container, two immiscible fluids will separate so as to minimize the area of their interface.

\medskip
 In \cite{S}, Savin proved that if the $0$ level set of the minimizers of the Allen-Cahn energy is included in a flat cylinder then, in the interior, it is included in a flatter cylinder. As a consequence, he confirmed the De Giorgi conjecture  which states that level sets of global solutions $u$ are hyperplanes in dimension $n\leq8$ under the following condition:
 \[|u|\leq 1\quad \partial_n u>0\quad \lim_{x\rightarrow\pm\infty}u(x',x_n)=\pm1.\]
This result reveals a deep connection between the theory of phase transitions and the theory of minimal surfaces.
The De Giorgi conjecture in the whole space $\mathbb{R}^n$  has also been  studied. It was first proved in \cite{GG} for $n=2$ and later for $n=3$ in \cite{AC3}, while the case  $n=4$ remains open.

\medskip
On the other hand,  when $W\notin C^{1,1}$  near one of its minimum points,  then a minimizer can develop constant patches  and this leads to a free boundary problem.   In other words, there are two regions in $\Omega$:  one in which $\{u=0\}$ and one in which $\{u>0\}$. Moreover, if we denote
 \[F(u):=\partial\{u>0\}\cap \Omega,\]
 then this is called the free boundary, which splits into regular set and singular set:
 \[F(u)=\mathrm{Reg}(F(u))\cup \mathrm{Sing}(F(u)).\]
The set of regular points is an open subset of the free boundary, while the singular points are those at which the contact set $\{u=0\}$ has density zero. One of the most important and interesting problem is the regularity of the free boundaries which is intimately connected to the classification of global minimizers for $J$.  Two such particular potentials  were investigated systematically.

\medskip
 The first one is the Lipschitz potential
\[W(t)=t^+,\]
 which corresponds to the classical obstacle problem.
 The $C^{1,1}$ regularity of $u$ when $\varphi$ is $C^{1,1}$ was first obtained by Frehse in \cite{F}. Later, Caffarelli  \cite{C3} proved that global solutions to the obstacle problem are convex. Using the monotonicity formula from \cite{ACF}, he established for the first time the regularity of free boundaries  outside a certain set of singular points.  For more comprehensive survey, we refer the reader to the book of Petrosyan, Shahgholian and Uraltseva \cite{PSU} for an introduction to this subject.

\medskip
Another one is the discontinuous potential
 \[W(t)=\chi_{\{t>0\}},\]
with its associated Alt-Caffarelli energy,   which is known as the one-phase or Bernoulli free boundary problem.
In \cite{AC}, Alt and Caffarelli established the existence of a minimizer and proved that the reduced free boundary is a regular surface.
Later Caffarelli developed a Harnack inequality approach to the regularity of free boundaries, see \cite{C1,C4,C5} for more details.
Using a different approach from Caffarelli's classical supconvolution method, DE Silva \cite{S}  proved that Lipschitz free boundaries are $C^{1,\alpha}$. The key idea is to establish an "improvement of flatness" property for the graph of $u$ in the one-phase problem using a Harnack-type inequality. For a comprehensive account of the basic free boundary theory, we refer to Caffarelli and Salsa \cite{CS}.

\medskip
These two important examples are part of the more general family of Alt-Phillips potentials,  which correspond to the power-growth potentials
\begin{equation}\label{e1.6}W(t)=(t^+)^{\gamma}\quad\text{with }\gamma\in(0,2), \quad\text{and }\Delta u=\gamma u^{\gamma-1}.\end{equation}
Via the maximum principle, the classical solutions exist and  they are nonnegative. For $1<\gamma<2$, such an equation is used for modelling the distribution of a gas with density $u(x)$, in reaction with a porous catalyst pellet $\Omega$ \cite{A}. When $0<\gamma\leq1$, these potentials interpolate between the one-phase problem $\gamma=0$ and the obstacle problem $\gamma=1$.

\medskip
Take a simple change of variables
\[w=u^{\frac{1}{\alpha}},\quad\alpha=\frac{2}{2-\gamma}\text{ with }\gamma\in(0,2),\]
then $w$ satisfies   a degenerate equation
\[\Delta w=\frac{h(\nabla w)}{w}\quad\text{in }\{w>0\},\]
with
\[h(\nabla w)=0\quad\text{on } F(w),\]
where $h(p)=\frac{\gamma}{\alpha}-(\alpha-1)|p|^2$.
In an average sense, Alt and Phillips  \cite{AP} proved that $w$ is Lipschitz continuous and must grow like the distance to the free boundary. They further established the smoothness of the reduced part of the free boundary. DE Silva and Savin  \cite{SS}  developed the viscosity theory for the Alt-Phillips functional for general functions $h$ and proved an existence and regularity theory. The sign assumptions on the function $h$ are crucial, when $h<0$ in $B_1$,  the problem becomes completely different and it would correspond to the case of negative power potentials:
\begin{equation}\label{e1.7}W(t)=\frac{1}{\gamma}t^{-\gamma}\chi_{\{t>0\}},\quad\gamma\in(0,2).\end{equation}
These potentials are relevant in liquid models with large cohesive internal forces in regions of low density. The upper bound $\gamma<2$  is necessary for the finiteness of the energy. As $\gamma\rightarrow 2$,  the energy concentrates more and more near the free boundary,
and heuristically, the free boundary should minimize the surface area in the limit.

\medskip
In this paper, we consider the minimization problem of the Alt-Phillips functional involving negative power potentials
\begin{equation}\label{e1.1}\mathcal{E}_{\gamma}(u)=\int_{\Omega}\frac{1}{2}|\nabla u|^2+\frac{1}{\gamma}u^{-\gamma}\chi_{\{u>0\}}dx,\quad\gamma\in(0,2),\end{equation}
%%%补充相关研究进展
In \cite{DS2}, D. DE Silva and O. Savin showed that the non-negative minimizer $u$ of $\mathcal{E}_{\gamma}$ is a viscosity solution to the following degenerate one-phase free boundary problem:
\begin{equation}\label{e1.2}\begin{cases}
\Delta u=-u^{-\gamma-1}&\text{in }\{u>0\}\cap B_1\\
u(x_0+t\nu)=c_0 t^{\alpha}+o(t^{2-\alpha})&\text{on }F(u):=\partial\{u>0\}\cap B_1,
\end{cases}\end{equation}
with $t\geq0$, $\nu$ the unit normal to $F(u)$ at $x_0$ pointing towards $\{u>0\}$ and
\begin{equation}\label{e1.3}
\alpha:=\frac{2}{2+\gamma}\quad c_0:=[\alpha(1-\alpha)]^{-\frac{1}{2+\gamma}}.
\end{equation}
using a monotonicity formula and dimension reduction, they  proved that the free boundaries of the non-negative minimizers of $\mathcal{E}_{\gamma}$ are $C^{1,\delta_0}$ surfaces for some $\delta_0\in(0,1)$   except on a closed singular set $\Sigma_{u}\subset F(u)$ of Hausdorff dimension $n-3$. They further obtained density estimates for the free boundaries in \cite{DS3}, and established a $\Gamma$-convergence result of  $\mathcal{E}_{\gamma}$ (properly rescaled) to the Dirichlet-perimeter functional
\[\int_{\Omega}|\nabla u|^2dx+\mathrm{Per}_{\Omega}(\{u=0\}). \]
They also  investigated the rigidity of non-negative global minimizers of $\mathcal{E}_{\gamma}$ and  showed that the global minimizers in $\mathbb{R}^n$ are one-dimensional if $\gamma$ is close to $2$ and $n\leq7$, or if  $\gamma$  is close to $0$ and $n\leq4$.

\medskip
Very recently, the higher regularity for free boundaries in the Alt-Phillips problem with positive power potentials \eqref{e1.6} has also been studied. Fusch and Koch \cite{FK} proved it for $\gamma\in(1,2)$  and Allen et al. \cite{AKS} for $\gamma\in(0,\frac{2}{3}]$. Restrepo and Ros-Oton \cite{RRO} using a completely different method and proved that the set of regular points is actually $C^{\infty}$ for any $\gamma\in(0,2)$.
Inspired by the method of \cite{RRO},  this paper investigates the higher regularity of the Alt-Phillips functional with negative power potentials \eqref{e1.7} with $\gamma\in(0,2)$.

When we completed this work, we learned that Carducci and Tortone \cite{CT} had independently proved the $C^{\infty}$ regularity of minimizers for the Alt-Phillips functional for all exponents $\gamma\in(-2,2)$. Their approach is based on a hodograph transformation near regular points (see the seminal work of Kinderlehrer and Nirenberg \cite{KN}), which reformulates the problem as establishing Schauder estimates for a degenerate quasilinear PDE with Neumann boundary conditions. In contrast, the strategy we employ follows that of Restrepo and Ros-Oton \cite{RRO}, which focuses on the PDE satisfied by the ratio of partial derivatives of a solution (see \cite{DS4,DS5}, where this method was originally introduced in the context of the Alt-Caffarelli problem and the obstacle problem).

%In this paper, we introduce a linearized operator (for the negative power case)   satisfied by the partial derivatives of a solution  to the first equation in \eqref{e1.5}.

\medskip
We first recall the notation of viscosity solution to \eqref{e1.2} introduced in \cite{DS2}. We say that a continuous function $u$ touches a continuous function $\phi$ by above (resp. by below) at a point $x_0$ if
\[u\geq\phi\ (resp. u\leq\phi)\text{ in }B_r(x_0)\text{ with }u(x_0)=\phi(x_0).\]
We now consider the class  $\mathcal{C}^{+}$ of continuous functions $\phi$ vanishing on $\partial B_R(x_0)$   such that $\phi(x)=\phi(|x-x_0|)$ positive in $B_R(x_0)$ and extend it to be $0$ outside $B_R(x_0)$. Denote the distance function $d_1(x):=dist(x,\partial B_R(x_0))$ for $x\in B_R(x_0)$ and $0$ otherwise. Similarly, define the  class  $\mathcal{C}^{-}$ with $\phi$ positive outside $B_R(x_0)$ and $\phi=0$ in $B_R(x_0)$. The distance function $d_1(x):=dist(x,\partial B_R(x_0))$ for $x\in B_R^c(x_0)$ and $0$ otherwise.

\begin{definition}\label{definition1.1}(\cite{DS2}) A non-negative function $u$ is the viscosity solution for \eqref{e1.2} if
\begin{itemize}
\item[(i)] $u$ is $C^{\infty}$ and satisfies the equation in the classical sense in $\{u>0\}\cap B_1$;
\item[(ii)] if $x_0\in F(u)$, then $u$ cannot touch $\psi\in\mathcal{C}^+$ (resp. $\mathcal{C}^-$) by above (resp. below) at $x_0$, with
\[\psi(x):=c_0d_1(x)^{\alpha}+\mu d_1(x)^{2-\alpha},\]
for some $\mu>0$ (resp. $\mu<0$).
\end{itemize}
\end{definition}

Our main result can be stated as follows.
\begin{theorem}\label{theorem1.1}
Let $u$ be a non-negative viscosity solution of problem \eqref{e1.2}. Assume that $x_0\in F(u)$ is a regular free boundary point. Then,
\begin{itemize}
\item[(i)] the free boundary $F(u):=\partial\{u>0\}\cap B_1$ is $C^{\infty}$ in a neighborhood of $x_0$;
\item[(ii)] the functions $u/d^{\frac{2}{2+\gamma}}$ and $u^{\frac{2+\gamma}{2}}$ are $C^{\infty}$ in a neighborhood of $x_0$, where $d(x)=dist(x,F(u))$ is the distance function to free boundary.
\end{itemize}
\end{theorem}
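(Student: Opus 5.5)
We follow the strategy of Restrepo and Ros-Oton \cite{RRO}, working with ratios of partial derivatives. The starting point is the result of De Silva and Savin \cite{DS2}. Near a regular point $x_0$, the free boundary $F(u)$ is a $C^{1,\delta_0}$ graph, say in the direction $e_n$. Moreover, $u$ behaves like $c_0 d^{\alpha}$, with $\alpha=\frac{2}{2+\gamma}$, in the sense that $u/d^{\alpha}\to c_0$ at $F(u)$ with a H\"older rate. The plan is to bootstrap the regularity by an induction of the form
\[
F(u)\in C^{k,\beta}\ \Longrightarrow\ \frac{\partial_i u}{\partial_n u}\in C^{k,\beta}\ \text{up to } F(u)\ \Longrightarrow\ F(u)\in C^{k+1,\beta}.
\]
The second implication is immediate. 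The level sets of $u$ have normals $\nabla u/|\nabla u|$, whose direction is determined by the ratios $\partial_i u/\partial_n u$. Passing to the limit, $F(u)$ is the graph of a function $g$ with $\partial_i g=-\partial_i u/\partial_n u$ on $F(u)$, so $g$ gains one derivative.

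For the first implication, differentiate $\Delta u=-u^{-\gamma-1}$. Each $u_i:=\partial_i u$ satisfies the linearized equation
\[
\Delta u_i=(\gamma+1)u^{-\gamma-2}u_i .
\]
The zeroth-order coefficient is positive, which is the good sign, but it is singular like $d^{-2}$. Set $w=u_i/u_n$. Then $w$ solves a divergence-form equation with no zeroth-order term:
\[
\mathrm{div}\bigl(u_n^2\nabla w\bigr)=0\quad\text{in }\{u>0\}.
\]
Since $u_n\sim d^{\alpha-1}$ and $\alpha<1$, the weight $u_n^2\sim d^{2\alpha-2}$ blows up at $F(u)$. It remains locally integrable because $2\alpha-2>-1$ exactly when $\gamma<2$. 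After dividing by the model profile, this becomes a weighted equation of the type $\mathrm{div}(d^{a}A\nabla w)=0$ with $a=2\alpha-2\in(-1,0)$, where $A$ is uniformly elliptic. Its coefficient $A$ involves $d^{1-\alpha}u_n$, which by the previous inductive step is $C^{k-1,\beta}$. This is the same class of degenerate or singular equations treated in \cite{RRO} and by Terracini, Tortone and Vita. For that class one has boundary Schauder estimates in $C^{k,\beta}$ domains, with the natural conormal condition $d^{a}\partial_\nu w=0$ at $F(u)$.

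The key steps are as follows.
\begin{enumerate}
\item First establish the comparison principle for the linearized operator $L v=\Delta v-(\gamma+1)u^{-\gamma-2}v$ announced in the abstract. This is used to show that $u_n>0$ near $x_0$ and $u_n\simeq d^{\alpha-1}$, and to justify that $w$ satisfies the weighted equation with the correct Neumann-type condition in the weak sense across $F(u)$, so that no spurious boundary contributions appear.
\item Prove the asymptotic expansions $u_n=d^{\alpha-1}(c_0\alpha+o(1))$ with H\"older rate, and $u/d^{\alpha}\in C^{k,\beta}$ whenever $F(u)\in C^{k,\beta}$. For the latter, write $u=d^{\alpha}v$ and note that $v$ solves a similar degenerate equation, which can be treated by the same Schauder theory.
\item Apply the higher-order boundary Schauder estimates to $w$ and close the induction.
\end{enumerate}

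Part (ii) of Theorem~\ref{theorem1.1} follows from step 2 once $F(u)\in C^\infty$. The function $u/d^{\alpha}$ is then $C^\infty$. Consequently $u^{\frac{2+\gamma}{2}}=u^{1/\alpha}=d\cdot (u/d^{\alpha})^{1/\alpha}$ is $C^\infty$, since $u/d^{\alpha}\ge c>0$ and $d$ is smooth on the positive side of a smooth boundary.

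The main obstacle will be step 1 together with the base case. We need the comparison principle and barriers for $L$, whose singular potential $u^{-\gamma-2}$ is comparable to $d^{-2}$; the difficulty is to show that the only admissible behaviors of $L$-solutions near $F(u)$ are multiples of $d^{\alpha-1}$, and not the other indicial root. The indicial equation for $L$ at the boundary has roots $\alpha-1$ and $\alpha-1+(1-2\alpha)+\dots$ in the model, and we must rule out the second one. The first attempt would be to build explicit one-dimensional barriers $d^{\alpha-1}(1\pm C d^{\delta})$ and compare them with $u_n$ using the $C^{1,\delta_0}$ flatness of $F(u)$. We also need to obtain the initial $C^{\beta}$ regularity of $w$ from the $C^{1,\delta_0}$ result. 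For this we would adapt the boundary Harnack argument in \cite{DS4,DS5} to the operator $\mathrm{div}(u_n^2\nabla\cdot)$.
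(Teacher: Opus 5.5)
Your overall architecture is the paper's. There, $w=u_i/u_n$ solves $\mathrm{div}(u_n^2\nabla w)=0$, which is rewritten as $\mathrm{div}(a\,d^{s}\nabla w)=0$ with $s=2\alpha-2=-\frac{2\gamma}{2+\gamma}\in(-1,0)$ and $a=u_n^2/d^{s}$. The Schauder theory of Terracini--Tortone--Vita is then applied, and the normal $\nabla u/|\nabla u|$ gives one more derivative of $F(u)$.

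The gap is in the input this scheme needs: that $\partial\Omega\in C^{k,\delta}$ implies $u/d^{\alpha}$ and $u_i d^{1-\alpha}$ are $C^{k-1,\delta}$ up to $F(u)$, so that $a\in C^{k-1,\delta}$. This is Proposition~\ref{proposition4.1}, the main work of the paper. Your step 2 replaces it by a claim that does not hold. Writing $u=d^{\alpha}\psi$ and using $\alpha(\gamma+1)=2-\alpha$, one gets
\[
\mathrm{div}\big(d^{2\alpha}\nabla\psi\big)+d^{2\alpha-2}\big(\alpha(\alpha-1)|\nabla d|^2\psi+\psi^{-\gamma-1}+\alpha\,d\,\Delta d\,\psi\big)=0 .
\]
Its linearization at $\psi=c_0$ contains the term $2(\alpha-1)\,d^{2\alpha-2}(\psi-c_0)$. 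This zeroth-order term is critical: it is $d^{-2}$ times the weight, the same scaling as the principal part. So it cannot be placed on the right-hand side of a Schauder estimate; it shifts the indicial roots to $-1$ and $2-2\alpha=\frac{2\gamma}{2+\gamma}\in(0,1)$.

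Concretely, take the one-dimensional solutions $u=U(x_n)$ of $U''=-U^{-\gamma-1}$, $U(0)=0$, with first integral $\frac12U'^2-\frac1\gamma U^{-\gamma}=E\neq0$. They vanish on $\{x_n=0\}$, yet $u/x_n^{\alpha}=c_0+b\,x_n^{2-2\alpha}+\dots$ with $b\neq0$, so $u/d^{\alpha}$ is not even $C^{0,\beta}$ for $\beta>2-2\alpha$. Hence no regularity theory for the equation alone can give your step 2; the free boundary condition must be used to exclude this mode. Your claim $u/d^{\alpha}\in C^{k,\beta}$ is also one derivative more than the paper proves or needs. The paper proves $C^{k-1,\delta}$, and gets $u_id^{1-\alpha}\in C^{k-1,\delta}$ from $u_id^{1-\alpha}=d\,\partial_i(u/d^{\alpha})+\alpha\,\partial_i d\,(u/d^{\alpha})$ together with $|D^k(u/d^{\alpha})|\le Cd^{\delta-1}$.

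What is missing is the paper's analysis of $L_\kappa v=-\Delta v+\kappa v/d^2$. It is applied not to $u_n$ but to $v=u-c_0d^{\alpha}$, which satisfies $L_\kappa v=(f+g)d^{\alpha-2}$. Its indicial roots are $a_-=\alpha-1$ and $a_+=2-\alpha$, so $a_+-a_-=3-2\alpha$; the second root you wrote is not correct. The argument then runs as follows.
\begin{itemize}
\item The comparison principle (Lemma~\ref{lemma3.3}, with barrier $d^{1/2}|\ln d|^{1/2}+(\frac14+\kappa)d^{a_-}$) gives the decay $|v|\le Cd^{\frac12+a_-}$ of Lemma~\ref{lemma3.2}, which provides compactness for blow-ups.
\item A Liouville theorem in the half-space (Lemma~\ref{lemma3.5}: solutions are $P_1x_n^{a_+}+P_2x_n^{a_-+1}$) and a least-squares blow-up argument give $|v-Q_1d^{a_+}-Q_2d^{a_-+1}|\le C|x-x_0|^{k+\delta+a_-}$ at boundary points.
\item Rescaled interior Schauder estimates and Lemma~\ref{lemma3.7} turn this into the $C^{k-1,\delta}$ bounds.
\end{itemize}

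The $d^{a_+}=d^{2-\alpha}$ mode is exactly what the condition $u(x_0+t\nu)=c_0t^{\alpha}+o(t^{2-\alpha})$ forbids. In the paper this enters through the expansion $|u-c_0d^{\alpha}|\le C|x|^{\alpha+\delta}$ for every $\delta\in(0,1)$. This comes from Propositions~\ref{proposition2.2} and~\ref{proposition2.3}, which upgrade De Silva--Savin's fixed $\delta_0$; once $\alpha+\delta>a_+$, the term is ruled out at every boundary point. A H\"older rate with a small fixed exponent, as in your starting point, does not detect it.

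Conversely, $u_n>0$ and $u_n\simeq d^{\alpha-1}$ need no comparison principle. They follow from this expansion and rescaled gradient estimates (Step 2 of Proposition~\ref{proposition4.1}). Without these ingredients your induction has no coefficient regularity beyond the base case and does not close.
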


The idea is to show that $u_i d^{\frac{\gamma}{2+\gamma}}$, $u/d^{\frac{2}{2+\gamma}}$ and $u_i/u_n$ are $C^{\infty}$, namely,
\begin{equation}\label{e1.8}
\partial\Omega\in C^{k+1,\delta}\quad \stackrel{Step 1}{\Longrightarrow}\quad u_i d^{\frac{\gamma}{2+\gamma}},\ \frac{u}{d^{\frac{2}{2+\gamma}}}\in C^{k,\delta}\quad\stackrel{Step 2}{\Longrightarrow}\quad\frac{u_i}{u_n}\in C^{k+1,\delta}\quad\Longrightarrow\quad \partial\Omega\in C^{k+2,\delta}.
\end{equation}
Step 1 of \eqref{e1.8} will be proved inductively in Proposition \ref{proposition4.1}. To prove Step 2, we observe that the quotient $w:=\frac{u_i}{u_n}$ satisfies
\[\mathrm{div}(u_n^2 w)=0\quad\text{in}\quad\Omega\cap B_1.\]
Using Step 1, the equation can be rewritten as
\[\mathrm{div}(d^s(x)a(x)\nabla w(x))=0\quad\text{in}\quad\Omega\cap B_1,\]
where $s=-\frac{2\gamma}{2+\gamma}>-1$ and $a(x)\in C^{k,\delta}$. Therefore, we can apply  Theorem 1.1 in \cite{TTV} to deduce that $w\in C^{k+1,\delta}$, which  in turn  implies that  $\partial\Omega\in C^{k+2,\delta}$.

\medskip
%The main method employed in this paper is adapted from \cite{RRO}.
 Recall that for the Alt-Phillips problem with positive powers \eqref{e1.6}, solutions $u$ behave like $d^{\frac{2}{2-\gamma}}$ at regular points, and consequently their derivatives $u_i$ behave like $d^{\frac{\gamma}{2-\gamma}}$ for any $\gamma\in(0,2)$. Given this behavior,  the authors introduced the linearized operator
\[L_{\kappa_{\gamma}}(v):=-\Delta v+\kappa_{\gamma}\frac{v}{d^2},\]
where $\kappa_{\gamma}:=\frac{\gamma}{2-\gamma}(\frac{\gamma}{2-\gamma}-1)$. Note that $\kappa_{\gamma} \geq -\frac{1}{4}$, and in one dimension, the operator is characterized by two exponents:
\[\theta_-=\frac{1-\sqrt{1+4\kappa_{\gamma}}}{2}\quad\text{and}\quad \theta_+=\frac{1+\sqrt{1+4\kappa_{\gamma}}}{2}.\]
This means that there are two completely different cases: for $\gamma \geq \frac{2}{3}$, the exponents $\theta_+=\frac{\gamma}{2-\gamma}$ and $\theta_-=1-\frac{\gamma}{2-\gamma}$, while for $\gamma \leq \frac{2}{3}$,  the exponents $\theta_-=\frac{\gamma}{2-\gamma}$ and $\theta_+=1-\frac{\gamma}{2-\gamma}$.

\medskip
In this paper, we consider the Alt-Phillips problem with negative powers \eqref{e1.7}. In this setting, solutions $u$ behave like $d^{\frac{2}{2+\gamma}}$ at regular points, and thus $u_i$ behave like $d^{-\frac{\gamma}{2+\gamma}}$. We similarly define a linearized operator
\[L_{\kappa}(v)=-\Delta v+\kappa\frac{v}{d^2},\]
with $\kappa=\frac{\gamma}{2+\gamma}(\frac{\gamma}{2+\gamma}-1)$. For this operator we establish a comparison principle (discussed in Section 3), which may also have other applications to the Alt-Phillips problem with negative powers.

Additionally, we prove a $C^{1,\delta}$ regularity at regular points for any $\delta\in(0,1)$ in Section 2, which is of independent interest. The $C^{1,\delta}$  regularity would allow one to derive a local expansion  near a regular point, which could then be used to classify monotone minimizing cones for the Alt-Phillips problem with negative exponent, a question we are interested in.

A key different from the positive power case lies in the characteristic exponents of the operator. For positive powers,  the specific form of the exponents $\theta_\pm$  depending on whether $\gamma$ is greater than or less than $2/3$. In contrast, for the negative powers, the two characteristic exponents in one dimension are given by the unified expressions
\[a_+=1+\frac{\gamma}{2+\gamma}\quad\text{and}\quad a_-=-\frac{\gamma}{2+\gamma}\]
for any $\gamma\in(0,2)$.  It follows that the derivatives $u_i$ behave as  $d^{a_-}$. Our analysis relies on understanding the one-dimensional solutions of $L_{\kappa}v=h d^{\frac{2}{2+\gamma}-2}$. Here  $a_-\in(-1/2,0)$, whereas in the positive exponent case  $\theta_-\in(-\infty,1/2)$. This difference leads to distinct growth behavior of $v$ near the free boundary.
 %However, since $a_- < 0$, the estimates in \cite{RRO} are no longer directly applicable. Fortunately, the range $\left(-\frac{1}{2}, 0\right)$ of the exponent  $a_-$ in the negative power case makes it possible to modify the method from \cite{RRO} for our setting. Further details are provided in Section 3.

\medskip
This paper is organized as follows. In Section 2, we  improve the $C^{1,\delta_0}$ regularity of the free boundary to $C^{1,\delta}$ for any $\delta\in(0,1)$ by an iteration argument, and we introduce some properties of the distance function. In section 3, we prove several lemmas concerning the linearized equation that will be used in the proof of our main result. Finally, in section 4, we establish the Step 1 of \eqref{e1.8} and complete  the proof of Theorem \ref{theorem1.1}.

\medskip
{\bf Notations.} Throughout this paper,We denote $\mathbf{P}_{k}$ the space of polynomials of order $k$ in $n$ variables: if $Q\in\mathbf{P}_k$, then
\[Q(x)=\sum_{\mu\in\mathbb{N}^n,|\mu|\leq k}q^{(\mu)}x^{(\mu)},\quad x\in\mathbb{R}^n.\]
We also use the standard notation $\lfloor t\rfloor$ for the integer part of a positive real number $t$.

\vspace{-0.1cm}

  \section{Preliminaries}
We begin this section by recalling some regularity properties of solutions to problem \eqref{e1.2}. Building on these results and the $C^{1,\delta_0}$ regularity of the free boundary, we then develop an iteration argument to improve this regularity to $C^{1,\delta}$ for any $\delta\in(0,1)$. Some useful properties of the distance function are also discussed.
 \begin{lemma}\label{lemma2.1}(\cite{DS2})
 Let $u$ be a minimizer for $\mathcal{E}_{\gamma}$ in $B_1$. Then
 \[\mathcal{H}^{n-1}(F(u)\cap B_{\frac{1}{2}})\leq C(n,\gamma)\]
 and the free boundary $F(u)$ is locally $C^{1,\delta_0}$ for some $\delta_0\in(0,1)$ except on a closed singular set $\Sigma_u\subset F(u)$ of Hausdorff dimension $n-3$.
 \end{lemma}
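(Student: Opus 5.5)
This lemma is quoted from \cite{DS2}, so the plan is to reconstruct the De Silva--Savin argument in its standard order rather than to give something new. First I would establish the basic regularity and nondegeneracy of minimizers of $\mathcal{E}_\gamma$. Comparison with one-dimensional profiles, combined with the Euler--Lagrange equation $\Delta u=-u^{-\gamma-1}$ in $\{u>0\}$, should give the optimal $C^{\alpha}$ growth $u(x)\le C\,\mathrm{dist}(x,F(u))^{\alpha}$ with $\alpha=\frac{2}{2+\gamma}$. A competitor argument (cutting $u$ off to $0$ in a small ball) should give the matching lower bound $\sup_{B_r(x_0)}u\ge c r^{\alpha}$ at free boundary points. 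These two bounds yield uniform positive density of both $\{u>0\}$ and $\{u=0\}$ at every scale. The Hausdorff bound $\mathcal{H}^{n-1}(F(u)\cap B_{1/2})\le C(n,\gamma)$ would then follow from a covering argument. For a Vitali cover of $F(u)\cap B_{1/2}$ by balls $B_r(x_i)$, the energy contributed by the potential term in each ball is at least $c\,r^{n-\gamma\alpha}$, and the global energy bound $\mathcal{E}_\gamma(u,B_{3/4})\le C$ (obtained by comparison with a cutoff) controls $\sum_i r^{n-\gamma\alpha}\cdot r^{-(2-2\alpha)}\sim\#\{i\}\,r^{n-1}$. I expect the scaling check that $n-\gamma\alpha-2+2\alpha=n-1$ to hold, since $2\alpha-\gamma\alpha=2-2\alpha\cdot\frac{\gamma}{2}\cdots$ reduces exactly because $u^{-\gamma}\sim r^{-\gamma\alpha}$ matches $|\nabla u|^2\sim r^{2\alpha-2}$.

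Second, for the regularity, I would use the Weiss-type monotonicity formula for the $\alpha$-homogeneous rescaling $u_r(x)=r^{-\alpha}u(x_0+rx)$,
\[W(r)=r^{-n-2\alpha+2}\mathcal{E}_\gamma(u,B_r(x_0))-\tfrac{\alpha}{2}r^{-n-2\alpha+1}\int_{\partial B_r(x_0)}u^2.\]
Its monotonicity implies that blow-ups exist and are $\alpha$-homogeneous global minimizers. At points where a blow-up is the one-dimensional profile $c_0(x\cdot\nu)_+^{\alpha}$, which are the regular points, I would prove an improvement-of-flatness lemma for viscosity solutions in the sense of Definition \ref{definition1.1}. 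The tool is a Harnack inequality for the degenerate problem, proved using the barriers $c_0d_1^{\alpha}+\mu d_1^{2-\alpha}$, with a linearized problem in the limit. Iterating the lemma gives $C^{1,\delta_0}$ smoothness of $F(u)$ near such points. I expect this linearization and Harnack step to be the main obstacle, because the solution is singular and the gradient is unbounded at the free boundary.

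Finally, the singular set is handled by Federer dimension reduction applied to homogeneous minimizing cones. One shows that in dimensions $n\le 2$ every homogeneous minimizer is one-dimensional, by a stability or ODE analysis on the circle. Since a cone that is singular at the origin cannot be split further, this yields $\dim\Sigma_u\le n-3$. The same Weiss monotonicity shows that $\Sigma_u$ is closed, because regular points form an open set by the $\varepsilon$-regularity.
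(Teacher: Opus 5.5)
The paper gives no argument for this lemma; it is imported from \cite{DS2}. Your outline for the regularity part is the standard architecture one expects there: optimal $C^{\alpha}$ growth and nondegeneracy, Weiss monotonicity and homogeneous blow-ups, improvement of flatness, and Federer reduction to the plane.

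\textbf{Main gap.} The first assertion, $\mathcal{H}^{n-1}(F(u)\cap B_{1/2})\le C$, does not follow from your covering argument, because the scaling is wrong. Your per-ball lower bound $\int_{B_r(x_i)}u^{-\gamma}\chi_{\{u>0\}}\ge c\,r^{n-\gamma\alpha}$ is correct. But $\gamma\alpha=\frac{2\gamma}{2+\gamma}=2-2\alpha<1$, so a bound on the total energy only gives $\#\{i\}\le C r^{-(n-\gamma\alpha)}$. That is, $\#\{i\}\,r^{n-1}\le C r^{1-2\alpha}$, which blows up as $r\to0$ since $\alpha>\frac12$. The extra factor $r^{-(2-2\alpha)}$ in your sum has no source. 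Even with it, the exponent would be $n-4+4\alpha$, which equals $n-1$ only when $\gamma=\frac23$.

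This is not a bookkeeping slip. For $\gamma<2$ the energy density behaves like $d^{2\alpha-2}$, which is integrable across $F(u)$. So the energy does not concentrate on the free boundary and cannot control its area; it only does so as $\gamma\to2$ after rescaling, which is the $\Gamma$-convergence to perimeter recalled in the introduction.

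\textbf{What is needed instead.} You need a quantity with genuine $r^{n-1}$ scaling at free boundary points. One option is a strip estimate $|\{0<w<\varepsilon\}\cap B_{1/2}|\le C\varepsilon$ for the Lipschitz, linearly nondegenerate function $w=c_0^{-1/\alpha}u^{1/\alpha}$. Disjoint balls $B_\varepsilon(x_i)$ centered on $F(u)$ each contain a ball of radius $c\varepsilon$ inside this strip, which gives $\#\{i\}\,\varepsilon^{n-1}\le C$. Another option is local finiteness of the perimeter of $\{u>0\}$ together with two-sided density bounds.

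The Alt--Caffarelli device of testing minimality against $(w-\varepsilon\eta)^+$ does not transfer verbatim. In the $w$ variable the energy is $\frac{c_0^{-\gamma}}{\gamma}\int_{\{w>0\}}w^{2\alpha-2}(1+|\nabla w|^2)$. Its weight is not invariant under vertical shifts, so in the one-dimensional model the gain inside the strip is cancelled to leading order by the loss above it. Some additional argument is required at this step.

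\textbf{Secondary error.} The two growth bounds do not by themselves give positive density of $\{u=0\}$. The function $c_0|x_n|^{\alpha}$ satisfies both bounds, yet its zero set is a hyperplane. Zero-phase density needs its own competitor argument. The model case is that $c_0|x_n|^\alpha$ is not a minimizer: replacing it by the constant $c_0t^{\alpha}$ in the slab $\{|x_n|<t\}$ lowers the energy per unit area from $\frac{4c_0^{-\gamma}}{\gamma(2\alpha-1)}t^{2\alpha-1}$ to $\frac{2c_0^{-\gamma}}{\gamma}t^{2\alpha-1}$.

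This matters for your planar classification. On each positivity arc, the ODE $g''+\alpha^2 g=-g^{-\gamma-1}$ has a first integral, and the free boundary expansion forces it to vanish. That only shows each arc has length $\pi$. The two-arc cone $c_0|x\cdot e|^{\alpha}$ survives the ODE analysis and has to be excluded by minimality, through exactly this density estimate. Without it your reduction would only give $\dim\Sigma_u\le n-2$.
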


We consider the change of variables
\begin{equation}\label{e2.8}w:=c_0^{-\frac{1}{\alpha}}u^{\frac{1}{\alpha}},\end{equation}
with $c_0$ and $\alpha$ as in \eqref{e1.3}. Under this transformation, the problem \eqref{e1.2} can be viewed as a one-phase free boundary problem for $w$. A direct calculation shows that
\[\alpha w^{\alpha-1}\Delta w+\alpha(\alpha-1)w^{\alpha-2}|\nabla w|^2=-\alpha(1-\alpha)w^{-\alpha(\gamma+1)},\]
hence $w$ solves  a degenerate equation
\begin{equation}\label{e1.4}
\Delta w=(1-\alpha)\frac{|\nabla w|^2-1}{w}.
\end{equation}
Since $u$ is a viscosity solution of problem \eqref{e1.2}, the variable change implies that $w$ is a viscosity solution of \eqref{e1.4} in the following sense:
\begin{definition}\label{definition2.1}  We say $w: B_1\rightarrow\R^+$ satisfies \eqref{e1.4} in the viscosity sense, if
\begin{itemize}
\item [(i)] $w$ is $C^{\infty}$ and satisfies the equation in the classical sense in $\{w>0\}\cap B_1$;
\item [(ii)] if $x_0\in F(w):=\partial\{w>0\}\cap B_1$, then $w$ cannot touch $\psi\in\mathcal{C}^+$ (resp. $\mathcal{C}^-$) by above (resp. by below) at $x_0$ with
    \[\psi(x):=d(x)+\mu d(x)^{3-2\alpha},\]
with $\mu>0$ (resp. $\mu<0$).
\end{itemize}
\end{definition}

To prove the improvement of flatness, D. De Silva and O. Savin \cite{DS2} introduced the rescaling
\begin{equation}\label{e2.1}
\widetilde{w}=\frac{w-x_n}{\varepsilon}.
\end{equation}
They showed that this rescaled function is well approximated by a viscosity solution of the linearized equation
\begin{equation}\label{e2.2}
\begin{cases}
\Delta \phi+s\frac{\phi_n}{x_n}=0&\text{in\ }B_1^+:=B_1\cap\R^n_+\\
\frac{\partial\phi}{\partial x_n^{1-s}}=0 &\text{on\ }\{x_n=0\},
\end{cases}
\end{equation}
with $\phi_n:=\frac{\partial\phi}{\partial x_n}$ and $3-2\alpha=1-s>1$. Based on this approximation, they established the following improvement of flatness result:
\begin{lemma}\label{lemma2.2}(\cite{DS2})
Assume that $w$ is a viscosity solution of \eqref{e1.4} in $B_1$, and
\[(x_n-\varepsilon)^+\leq w\leq(x_n+\varepsilon)^+,\quad 0\in F(w),\]
for some $\varepsilon\leq\varepsilon_0(\gamma,n)$ small, universal. Then, there exists $r$ universal such that
\[(x\cdot\nu-\frac{\varepsilon}{2}r)^+\leq w\leq (x\cdot\nu+\frac{\varepsilon}{2}r)^+\quad\text{in }B_r,\]
for some unit direction $\nu$ with $|\nu-e_n|\leq C\varepsilon$.
\end{lemma}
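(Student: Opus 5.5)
The plan is to follow the compactness scheme of De Silva and Savin: argue by contradiction, blow up the flatness, and pass to the linearized problem \eqref{e2.2}, whose regularity theory forces improvement. Suppose the statement fails. Then there are viscosity solutions $w_k$ of \eqref{e1.4} with
\[(x_n-\varepsilon_k)^+\leq w_k\leq (x_n+\varepsilon_k)^+,\qquad 0\in F(w_k),\qquad \varepsilon_k\to0,\]
but with no direction $\nu$ achieving the improved flatness in $B_r$; the radius $r$ will be fixed later.

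\textbf{Harnack inequality.} First I would prove a Harnack-type inequality for $w$. If $w$ is $\varepsilon$-flat in $B_1$, then in a smaller ball the flatness improves to $(1-c)\varepsilon$ on one side. To do this I would compare with explicit barriers. These are radial perturbations of $x_n$ of the form $x_n+\varepsilon c\,\varphi(x)$, corrected by a term $\mu d^{3-2\alpha}$ so that they are strict sub- or supersolutions in the sense of Definition \ref{definition2.1}(ii). In the interior, the equation $\Delta w=(1-\alpha)(|\nabla w|^2-1)/w$ linearizes around $x_n$ to $\Delta\phi+s\phi_n/x_n$ with $s=2\alpha-2$. Checking the sign of the error terms gives the strict inequality.

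Iterating the Harnack inequality yields a uniform Hölder modulus for
\[\widetilde w_k=\frac{w_k-x_n}{\varepsilon_k}\]
at scales larger than $\varepsilon_k$. A generalized Arzelà--Ascoli argument then gives $\widetilde w_k\to\phi$ uniformly, in the Hausdorff sense of graphs, on $\overline{B_{1/2}^+}$.

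\textbf{Identifying the limit.} Next I would show that $\phi$ is a viscosity solution of \eqref{e2.2}. For the interior equation, I would touch $\phi$ by a smooth function and transfer the touching to $\widetilde w_k$; expanding the equation for $w_k=x_n+\varepsilon_k\widetilde w_k$ gives $\Delta\phi+s\phi_n/x_n\le0$ (respectively $\ge0$). For the boundary condition $\partial\phi/\partial x_n^{1-s}=0$, I would use test functions
\[P(x)+b\,x_n^{1-s}\]
with $b\ne0$. Their $w_k$-analogues $x_n+\varepsilon_k(P+b\,x_n^{3-2\alpha})$ are precisely of the admissible form $d+\mu d^{3-2\alpha}$ after a small adjustment. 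This yields a contradiction with Definition \ref{definition2.1}(ii).

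\textbf{Regularity of the limit and conclusion.} Solutions of \eqref{e2.2} are smooth up to $\{x_n=0\}$ in the tangential directions, and even in $x_n$. Concretely, $\phi$ extends to an $x'$-smooth function of the form $\phi(x)=\phi(0)+\nabla_{x'}\phi(0)\cdot x'+O(|x|^2)$ (I would take this from the known theory for the operator $\Delta+s\partial_n/x_n$ with $s>-1$). Choosing $r$ small gives
\[|\phi-\phi(0)-\nabla_{x'}\phi(0)\cdot x'|\le r/8 \quad\text{in } B_r^+.\]
Transferring this back to $w_k$ yields flatness $\varepsilon_k r/2$ in the direction
\[\nu=\frac{e_n+\varepsilon_k\nabla_{x'}\phi(0)}{|e_n+\varepsilon_k\nabla_{x'}\phi(0)|}.\]
Since $|\nu-e_n|\le C\varepsilon_k$, this contradicts the choice of the $w_k$. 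The shift $\phi(0)$ is absorbed because $0\in F(w_k)$ forces $\phi(0)=0$.

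\textbf{Main obstacle.} The hardest step is the Harnack inequality near the free boundary. There the degenerate term $(|\nabla w|^2-1)/w$ is singular, and the barriers must carry the correct $d^{3-2\alpha}$ correction to be admissible under Definition \ref{definition2.1}(ii). I would first try barriers built from the one-dimensional profile $t+\mu t^{3-2\alpha}$ composed with distance to a perturbed ball.
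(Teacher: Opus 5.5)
Your outline follows the same compactness and linearization route the paper relies on. The paper does not prove Lemma \ref{lemma2.2} itself; it imports it from \cite{DS2} and describes exactly this scheme: the rescaling $\widetilde w=(w-x_n)/\varepsilon$, its convergence to a viscosity solution of \eqref{e2.2}, and improvement from the regularity of the limit. Your compactness step and your final step ($\phi(0)=0$, tilt $|\nu-e_n|\le C\varepsilon_k$) match that description.

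The gap is in the Harnack inequality: the barrier you first describe is not a subsolution near its own free boundary. Take $\psi=x_n+\varepsilon c\varphi$ with $\varphi$ radially decreasing about an interior point, as in the classical one-phase argument. Then $\varphi_n>0$ on the part of the annulus near $\{x_n=0\}$, and
\[
\Delta\psi-(1-\alpha)\frac{|\nabla\psi|^2-1}{\psi}=\varepsilon c\,\Delta\varphi-(1-\alpha)\frac{2\varepsilon c\,\varphi_n+\varepsilon^2c^2|\nabla\varphi|^2}{\psi}\longrightarrow-\infty\quad\text{at }F(\psi),
\]
since $1-\alpha>0$, i.e.\ $s=2\alpha-2<0$. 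In the positive-power problem $s>0$ and this term has the good sign; here "checking the sign of the error terms" is exactly what fails.

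Adding $\mu d^{3-2\alpha}$ does not help. Since $x_n^{3-2\alpha}=x_n^{1-s}$ is annihilated by the linearized operator, its contributions cancel at leading order, and the $-\varepsilon/\psi$ singularity remains. More generally, slope $>1$ on $F(\psi)$ destroys the interior inequality, while slope $<1$ makes the test in Definition \ref{definition2.1}(ii) inconclusive. So barriers must have slope exactly one on their own free boundary. The "small adjustment" in your boundary-condition step has to achieve the same thing when $\partial_nP\neq0$ at the touching point.

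The construction in your last sentence is the right one, but it needs an extra term. Take $\psi=G(d_\Gamma)$, where $d_\Gamma$ is the distance to an $\varepsilon$-perturbed hypersurface $\Gamma$. Then $|\nabla\psi|=G'(d_\Gamma)$ and
\[
\Delta\psi-(1-\alpha)\frac{|\nabla\psi|^2-1}{\psi}=\Bigl(G''-(1-\alpha)\frac{G'^2-1}{G}\Bigr)(d_\Gamma)+G'(d_\Gamma)\,\Delta d_\Gamma .
\]
For $G(t)=t+\lambda t^2$ one has $G'^2-1=4\lambda G$, so the bracket equals exactly $2(2\alpha-1)\lambda>0$. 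Adding $\mu t^{3-2\alpha}$ with $\mu>0$ only adds terms that are positive to leading order. Hence $\lambda\sim C\varepsilon$ absorbs the $O(\varepsilon)$ curvature term $\Delta d_\Gamma$. At a free-boundary touching point, $\mu>0$ together with an interior tangent ball puts you in the setting of Definition \ref{definition2.1}(ii).

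Using such barriers to carry the interior Harnack gain to a $c\varepsilon$ gain in a smaller ball is the real content of the Harnack step in \cite{DS2}. Your proposal correctly names this as the main obstacle, but the barrier it actually commits to would not work.
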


An iteration of Lemma \ref{lemma2.2} (see Proposition 7.2 in \cite{DS2}) leads to the following result:
\begin{proposition}\label{proposition2.1}
Assume that $w$ is a viscosity solution of \eqref{e1.4} with $0\in F(w):=\partial\{w>0\}\cap B_1$. If for some small $\varepsilon>0$,
\[(x_n-\varepsilon)^+\leq w\leq(x_n+\varepsilon)^+,\quad\text{in }B_1\]
then, for some unit direction $\nu$ with $|\nu-e_n|\leq C\varepsilon$, we have
\[|w-(x\cdot\nu)^+|\leq C\varepsilon r^{1+\delta_0},\quad\text{in }B_r\]
for all small $r\in(0,1/2)$ and some $\delta_0\in(0,1)$.
\end{proposition}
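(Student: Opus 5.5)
The proposal is to iterate Lemma \ref{lemma2.2} geometrically, in the same way as in Proposition 7.2 of \cite{DS2}. The statement is invariant under the rescaling $w_\rho(x):=\rho^{-1}w(\rho x)$, and $w_\rho$ is again a viscosity solution of \eqref{e1.4}. Indeed, \eqref{e1.4} is scale invariant: both $\Delta w$ and $(1-\alpha)(|\nabla w|^2-1)/w$ pick up the same factor $\rho$. The test functions $d+\mu d^{3-2\alpha}$ from Definition \ref{definition2.1} rescale to $d+\mu\rho^{2-2\alpha}d^{3-2\alpha}$, which keep the sign of $\mu$. Rotations also preserve the problem. So we may apply Lemma \ref{lemma2.2} repeatedly.

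\textbf{Step 1 (setting up the iteration).} Set $\varepsilon_0:=\varepsilon$, $\nu_0:=e_n$ and $r_k:=r^k$, with $r$ the universal radius of Lemma \ref{lemma2.2}. The inductive claim is that for each $k$ there is a unit vector $\nu_k$ such that
\[(x\cdot\nu_k-\varepsilon_k r_k)^+\leq w\leq (x\cdot\nu_k+\varepsilon_k r_k)^+\quad\text{in }B_{r_k},\qquad \varepsilon_k=2^{-k}\varepsilon,\]
and moreover $|\nu_{k+1}-\nu_k|\leq C\varepsilon_k$.

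\textbf{Step 2 (inductive step).} Apply Lemma \ref{lemma2.2} to $w_{r_k}$ after rotating $\nu_k$ to $e_n$. The hypothesis is satisfied because $\varepsilon_k\leq\varepsilon\leq\varepsilon_0(\gamma,n)$. The lemma gives the claim at level $k+1$, with flatness $\varepsilon_k/2=\varepsilon_{k+1}$ in $B_{r_{k+1}}$, and a new direction $\nu_{k+1}$ with $|\nu_{k+1}-\nu_k|\leq C\varepsilon_k$.

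\textbf{Step 3 (limit direction and rate).} The increments $|\nu_{k+1}-\nu_k|$ are summable, so $\nu_k\to\nu$ with $|\nu-\nu_k|\leq 2C\varepsilon_k$, and in particular $|\nu-e_n|\leq C\varepsilon$. Now take $r_{k+1}\leq\rho\leq r_k$. In $B_\rho$,
\[|w-(x\cdot\nu)^+|\leq \varepsilon_k r_k+|\nu-\nu_k|\rho\leq C\varepsilon 2^{-k}r_k.\]
Choose $\delta_0$ so that $r^{\delta_0}=1/2$, which gives $2^{-k}=r_k^{\delta_0}$. Then $\varepsilon 2^{-k}r_k\leq C(r)\,\varepsilon\rho^{1+\delta_0}$, and this is the desired estimate.

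\textbf{Main obstacle.} The only real point to check is that the hypotheses of Lemma \ref{lemma2.2} survive rescaling and rotation at every step, with the \emph{same} universal constants. In particular, $0$ must remain a free boundary point of $w_{r_k}$, which is clear since $0\in F(w)$. Also, the flatness hypothesis is stated relative to $e_n$, so after rotating we need $w_{r_k}$ to be defined on all of $B_1$; this holds because $r_k\leq 1$. Everything else is bookkeeping. Note that $\delta_0$ is universal, depending only on the $r$ from Lemma \ref{lemma2.2}.
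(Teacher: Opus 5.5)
Your proposal is correct and is essentially the paper's approach: the paper gives no separate proof of this statement and simply defers to the geometric iteration of Lemma \ref{lemma2.2} in Proposition 7.2 of \cite{DS2}, the same bookkeeping it reproduces in Step 2 of the proof of Proposition \ref{proposition2.2}, and your checks (scale and rotation invariance, $|\nu-\nu_k|\le 2C\varepsilon_k$, the choice $r^{\delta_0}=1/2$) are sound. One cosmetic point: $\delta_0=\log 2/\log(1/r)$ lies in $(0,1)$ only when $r<1/2$, and otherwise you may replace it by any smaller exponent in $(0,1)$, since for $\rho<1$ the bound with a larger exponent implies the bound with a smaller one.
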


The above improvement of flatness result gives a $C^{1,\delta_0}$ estimate with a fixed exponent $\delta_0$. In fact, this result can be improved to any $\delta\in(0,1)$ by a standard iteration argument. More precisely, we have the following result, which is of independent interest.
\begin{proposition}\label{proposition2.2}
Assume that $w$ is a viscosity solution of \eqref{e1.4} with $0\in F(w):=\partial\{w>0\}\cap B_1$. Given any $\delta\in(0,1)$, there exists a small $\varepsilon_0=\varepsilon_0(\delta,\gamma,d)>0$ such that if
\[(x_n-\varepsilon)^+\leq w\leq(x_n+\varepsilon)^+,\quad\text{in }B_1\]
for some $\varepsilon<\varepsilon_0$,
then, for some unit direction $\nu$ with $|\nu-e_n|\leq C\varepsilon$, we have
\[|w-(x\cdot\nu)^+|\leq C\varepsilon r^{1+\delta},\quad\text{in }B_r\]
for all small $r\in(0,1/2)$.
\end{proposition}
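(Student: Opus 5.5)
The plan is to replace the single improvement of flatness step of Lemma \ref{lemma2.2}, which only gains a factor $1/2$ at scale $r$, by an improvement at a scale $\rho$ with gain $\rho^{1+\delta}$ for the prescribed $\delta\in(0,1)$. This will come from the $C^{2}$-type regularity of solutions to the linearized problem \eqref{e2.2}, via a compactness argument, after which the proposition follows by iteration.

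\textbf{Step 1: regularity for the linearized problem.} We use the regularity theory for \eqref{e2.2} from \cite{DS2}. A viscosity solution $\phi$ of \eqref{e2.2} with $|\phi|\le 1$ in $B_1^+$ is smooth in the tangential directions $x'$. Moreover, since $\phi_n$ vanishes on $\{x_n=0\}$ at order $x_n^{-s}=x_n^{2-2\alpha}$, $\phi$ admits the expansion
\[
|\phi(x)-\phi(0)-\nabla_{x'}\phi(0)\cdot x'|\le C|x|^{2}\le C|x|^{1+\delta'}
\quad\text{in } B_{1/2}^+ ,
\]
for some $\delta'>\delta$. The only first-order terms allowed at the origin are $a+b\cdot x'$, because $1-s>1$ rules out a linear term in $x_n$. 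Such terms correspond exactly to a translation of the free boundary and a rotation of the direction $e_n$.

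\textbf{Step 2: improvement of flatness at a fixed scale.} Fix $\rho$ small with $C\rho^{1+\delta'}\le \tfrac12\rho^{1+\delta}$. We claim there is $\varepsilon_1(\delta,\gamma,n)$ with the following property. If $(x\cdot\nu-\varepsilon)^+\le w\le (x\cdot\nu+\varepsilon)^+$ in $B_1$ with $\varepsilon\le\varepsilon_1$ and $0\in F(w)$, then
\[
(x\cdot\nu'-\varepsilon\rho^{1+\delta})^+\le w\le (x\cdot\nu'+\varepsilon\rho^{1+\delta})^+
\quad\text{in } B_\rho ,
\]
with $|\nu'-\nu|\le C\varepsilon$. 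We argue by contradiction. Take $\varepsilon_k\to0$ and $w_k$ violating the conclusion, and set $\widetilde w_k=(w_k-x_n)/\varepsilon_k$ as in \eqref{e2.1}. The Harnack inequality of \cite{DS2} gives compactness: the graphs of $\widetilde w_k$ converge in Hausdorff distance to a H\"older $\phi$, $|\phi|\le1$, which solves \eqref{e2.2}. By Step 1 and $\phi(0)=0$, we get $|\phi-b\cdot x'|\le \tfrac12\rho^{1+\delta}$ in $B_\rho$. Choosing $\nu'=(e_n+\varepsilon_k b)/|e_n+\varepsilon_k b|$, this contradicts the assumption on $w_k$ for $k$ large.

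\textbf{Step 3: iteration.} Rescale with $w_j(x)=\rho^{-j}w(\rho^j x)$. This is again a viscosity solution of \eqref{e1.4}, since the equation is invariant under $w\mapsto w(rx)/r$. Applying Step 2 at each stage yields flatness $\varepsilon\rho^{j\delta}$ for $w_j$ in direction $\nu_j$, with $|\nu_{j+1}-\nu_j|\le C\varepsilon\rho^{j\delta}$. Hence $\nu_j\to\nu$ geometrically with $|\nu-\nu_j|\le C\varepsilon\rho^{j\delta}$. Undoing the scaling and interpolating between the scales $\rho^{j+1}<r\le\rho^j$ gives $|w-(x\cdot\nu)^+|\le C\varepsilon r^{1+\delta}$ in $B_r$. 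Also $|\nu-e_n|\le C\varepsilon$, since the sum $\sum_j\rho^{j\delta}$ depends only on $\delta$.

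\textbf{Main obstacle.} The expected difficulty is Step 1, namely justifying the quadratic expansion up to $\{x_n=0\}$ for the degenerate operator with the weighted Neumann condition. I would try the standard route first. Tangential derivatives of $\phi$ are again solutions, so iterating Harnack/H\"older estimates gives $C^\infty$ in $x'$. The $x_n$-behaviour then comes from the ODE $(x_n^{s}\phi_n)_n=-x_n^{s}\Delta_{x'}\phi$. Integrating it from $0$ with the boundary condition $x_n^{s}\phi_n\to0$ yields $|\phi_n|\le Cx_n$, which gives the expansion.
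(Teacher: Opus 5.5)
Your proposal is correct and follows essentially the same route as the paper. Both arguments prove a compactness-based improvement of flatness at a fixed small scale, powered by the quadratic expansion of solutions to the linearized problem \eqref{e2.2}: tangential regularity, plus the normal ODE in which the weighted Neumann condition kills the $x_n^{1-s}$ mode. Both then finish with a geometric iteration. Your scale condition $C\rho^{2}\le\frac12\rho^{1+\delta}$ is exactly the paper's requirement $\log_{r_0}M>\delta-1$, i.e.\ $Mr_0^2<r_0^{1+\delta}$.
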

\begin{proof}
Step 1. We first claim that there exists $M=M(\gamma,n)>1$ and $r_0\in(0,1/2)$ such that
\[Mr_0<\frac{1}{2},\quad \log_{r_0}M>\delta-1,\]
and
\begin{equation}\label{e2.4}
(x\cdot\nu-M r_0^2\varepsilon)^+\leq w\leq(x\cdot\nu+M r_0^2\varepsilon)^+\quad\text{in }B_{r_0}
\end{equation}
for some unit vector $\nu$ with $|\nu-e_n|\leq C\varepsilon$.

Assume by contradiction that for $M$ and $r_0$ to be chosen, there exists a sequence $\varepsilon_k\rightarrow 0$ and a sequence of functions $w_k$ such that the hypothesis of the lemma holds, but the improvement is not achieved. Then, D. DE Silva and O. Savin showed that the normalized solutions
\[\widetilde{w}_k=\frac{w_k-x_n}{\varepsilon_k}\]
converge uniformly on compact sets to the graph of a H\"{o}lder limiting function $\widetilde{w}$ defined in $\overline{B}_1^+$ satisfying equation \eqref{e2.2}.
From Theorem 7.3 in \cite{DS2}, we know that
\[|\widetilde{w}(x)-\widetilde{w}(0)-a'\cdot x'|\leq C|x|^{1+\delta_0}\]
with $C>0$, $0<\delta_0<1$ depending only on $n,s$, and a vector $a'\in\mathbb{R}^{n-1}$. By a standard techniques in \cite{CC}, we obtain
\[|\Delta_{x'}\widetilde{w}|\leq C(\gamma,n)\quad\text{in }B_{\frac{1}{2}}\cap\{x_n>0\}.\]
For any fixed $x'\in\mathbb{R}^{n-1}$,   define
\[y(t):=\widetilde{w}(x',t).\]
Then $y$ satisfies the following  ODE equation:
 \begin{equation}\label{e2.3}
 y''+s\frac{y'}{t}=-\Delta_{x'}\widetilde{w}(x',t)\quad\text{for }t>0.
 \end{equation}
As a consequence, $y$ can be written as
\[y(t)=c_1 t^{1-s}+c_2+f(t),\]
where $f(t)$ is a function satisfies \eqref{e2.3} with
\[|f(t)|\leq Ct^2.\]
Combining the initial condition at $t=0$ with the "Neumann" condition $\frac{\partial \widetilde{w}}{\partial x_n^{1-s}}=0$, we obtain
\[c_1=0\quad\text{and}\quad c_2=\widetilde{w}(x',0).\]
Consequently, we have the expansion
\[|\widetilde{w}(x',x_n)-\widetilde{w}(0,0)-\nabla_{x'}\widetilde{w}(0,0)\cdot x'|\leq C(s,n)r^2\quad\text{in }B_r\cap\{x_n>0\}.\]
Rescaling back to the original $w_k$, we get
\[|w_k-x_n^+-\varepsilon_k\nabla_{x'}\widetilde{w}(0,0)\cdot x'|\leq C\varepsilon_k r^2+\varepsilon_k o(1)\quad\text{in }B_r.\]
We now choose constants  $M>0$ and $r_0>0$ such that
\[M=2C(s,n),\quad Mr_0<\frac{1}{2}\quad\text{and}\quad\log_{r_0}M>\delta_0-1.\]
For $k$ large such that $\varepsilon_k o(1)<\varepsilon_k C(s,n)r_0^2$, we obtain
\[|w_k-x_n^+-\varepsilon_k\nabla_{x'}\widetilde{w}(0,0)\cdot x'|\leq \varepsilon_kM r_0^2\quad\text{in }B_{r_0}.\]
This contradicts our assumption and completes the improvement of flatness.

\medskip
Step 2.  We complete the proof by iteration.

\medskip
Using the result from Step 1, an iteration argument gives
\[|w-(x\cdot\nu_k)^+|\leq C\varepsilon(Mr_0^2)^k\quad\text{in }B_{r_0^k}\]
with $|\nu_k-\nu_{k+1}|\leq C\varepsilon(Mr_0^2)^k$. In particular, for every $1\leq i<j$, we have
\[|\nu_i-\nu_j|\leq\sum_{k=i}^{j-1}|\nu_k-\nu_{k+1}|\leq\sum_{k=i}^{j-1}C\varepsilon(Mr_0^2)^k=C\varepsilon\frac{(Mr_0^2)^{i}}{1-Mr_0^2}.\]
Hence ${\nu_k}$ is a Cauchy sequence and converges to some unit vector $\nu_0$. Moreover,
\[ |\nu_i-\nu_0|\leq\sum_{k=i}^{\infty}|\nu_k-\nu_{k+1}|\leq C\varepsilon\frac{(Mr_0^2)^{i}}{1-Mr_0^2}.\]
Therefore,
\[\begin{split}
|w-(x\cdot\nu_0)^+|&\leq|w-(x\cdot\nu_i)^+|+|x\cdot(\nu_k-\nu_0)|\\
&\leq C\varepsilon(Mr_0^2)^i+C\varepsilon\frac{(Mr_0^2)^{i}}{1-Mr_0^2}=C\varepsilon(Mr_0^2)^i\left(1+\frac{1}{1-Mr_0^2}\right)\quad\text{in }B_r.
\end{split}\]
Let $r\in(0,1/2)$ be arbitrary and let $i\in\mathbb{N}$ be such that $r_0^{i+1}\leq r\leq r_0^{i}$, then we have
\[\begin{split}
|w-(x\cdot\nu_0)^+|&\leq C\varepsilon\left(1+\frac{1}{1-Mr_0^2}\right)(Mr_0)^ir_0^i\\
&\leq C\varepsilon\left(1+\frac{1}{1-Mr_0^2}\right)(Mr_0)^{\log_{r_0}r}r\\
&=C\varepsilon\left(1+\frac{1}{1-Mr_0^2}\right)r^{1+\log_{r_0}M}r\\
&\leq C\varepsilon\left(1+\frac{1}{1-Mr_0^2}\right)r^{1+\delta},
\end{split}\]
where we used $\log_{r_0}M>\delta-1$ in the last inequality. Then, we complete the proof.
\end{proof}

In our analysis we will need some properties of the distance function to the free boundary. It is well known that the distance function is $C^{1,\delta_0}$ near a $C^{1,\delta_0}$ boundary. In order to improve the free boundary regularity to $C^\infty$, we will work with regularized versions of the distance function.
\begin{lemma}\label{lemma2.3}(\cite{DS2})
Let $\partial\Omega\cap B_2$ be a $C^{k,\delta}$ graph for $\delta\in(0,1]$ and $k\in\mathbb{N}$. Then, there exists a generalized distance function $d(x)\in C^{\infty}(\Omega\cap B_1)\cap C^{k,\delta}(\overline{\Omega}\cap B_1)$ satisfying
\begin{itemize}
\item [(i)] \begin{equation}\label{e2.5}\frac{1}{C}\mathrm{dist}(x,\partial\Omega)\leq d(x)\leq C\mathrm{dist}(x,\partial\Omega),\quad\text{for }x\in\Omega\cap B_1.\end{equation}
\item [(ii)] \begin{equation}\label{e2.6}
d(x)=x_n+Q(x)+g(x),\quad\text{for } x\in\Omega\cap B_1,
\end{equation}
where $Q(x)\in \mathbf{P}_{\lfloor k+\delta\rfloor}$ is a polynomials of order $\lfloor k+\delta\rfloor$ with its zeroth and first order terms identically equal to zero and satisfying $\|Q\|_{L^{\infty}(B_1)}\leq C$, and where $g\in C^{\infty}(\Omega\cap B_1)$ satisfying $|\nabla g|\leq C|x|^{k-1+\delta}$;
\item [(iii)] For any $\lambda\in\mathbb{R}$, if $k=1$, then
\begin{equation}\label{e3.27}
\left|\Delta d^{\lambda}-\lambda(\lambda-1)d^{\lambda-2}\right|\leq Cd^{\delta+\lambda-2}\quad\text{in }\Omega\cap B_1,
\end{equation}
and if $k>2$, then
\begin{equation}\label{e3.28}
\Delta d^{\lambda}-\lambda(\lambda-1)d^{\lambda-2}= d^{\lambda-1}g(x)\quad\text{in }\Omega\cap B_1,
\end{equation}
with $g\in C^{k-2,\delta}(\overline{\Omega}\cap B_1)\cap C^{\infty}(\Omega\cap B_1)$.
\end{itemize}
\end{lemma}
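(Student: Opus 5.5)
The plan is to build $d$ by regularizing the signed distance in the style of Lieberman, and then read off (ii) and (iii) from its construction. After a rotation and translation we write $\partial\Omega\cap B_2=\{x_n=\psi(x')\}$ with $\psi\in C^{k,\delta}$, $\psi(0)=0$, $\nabla\psi(0)=0$. We first set $\rho(x):=x_n-\psi(x')$, which satisfies $\rho\in C^{k,\delta}$, $|\nabla\rho|\ge 1$, and $\rho$ comparable to $\mathrm{dist}(x,\partial\Omega)$. We then mollify at a scale proportional to $\rho$ itself. Fix a radial mollifier $\eta$ and define $d$ implicitly by
\[
d(x)=\int_{B_1}\rho\big(x-\tfrac{d(x)}{2}\,y\big)\,\eta(y)\,dy .
\]
The implicit function theorem produces a unique solution with $d\in C^\infty(\Omega\cap B_1)$, because the map $t\mapsto t-\int\rho(x-ty/2)\eta$ has derivative in $[1/2,3/2]$. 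Property (i) follows since $|d-\rho|\le C\,d\,\|\nabla\rho\|_\infty/2$.

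Next we establish the standard regularized-distance estimates. Differentiating the defining identity gives $d\in C^{k,\delta}(\overline\Omega\cap B_1)$ and $d=\rho$ on $\partial\Omega$. For $j>k$ it also gives the interior bounds
\[
|D^j d|\le C\,d^{\,k+\delta-j},
\]
since every extra derivative falls on the mollifier at scale $d$. For (ii) we Taylor expand $\psi$ at $0$ up to order $\lfloor k+\delta\rfloor$. We let $Q$ be the Taylor polynomial of $d$ at $0$ of that order; its zeroth and first order terms vanish because $d(0)=0$ and $\nabla d(0)=\nabla\rho(0)=e_n$. The remainder $g=d-x_n-Q$ is then $C^{k,\delta}$ with vanishing derivatives up to order $k$ at the origin (for non-integer cases), so $|\nabla g|\le C|x|^{k-1+\delta}$.

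For (iii) we compute $\Delta d^\lambda=\lambda d^{\lambda-1}\Delta d+\lambda(\lambda-1)d^{\lambda-2}|\nabla d|^2$. This leaves two things to control.

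1. The factor $|\nabla d|^2-1$. Here we want $|\nabla d|^2-1=O(d^{\delta})$ when $k=1$ and $=d\,h$ with $h\in C^{k-2,\delta}$ when $k\ge2$. This is the main obstacle: the Lieberman $d$ is not automatically an exact distance, so $|\nabla d|\ne1$ in general. I would first try replacing $\rho$ by the true distance where it is $C^{k,\delta}$ (for $k\ge2$ it is, near the boundary), and mollifying that. Then $|\nabla d|^2-1$ vanishes on $\partial\Omega$ and is $C^{k-1,\delta}$, so by the Hadamard/Taylor lemma in the normal variable it equals $d\cdot h$ with $h\in C^{k-2,\delta}$. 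If that fails, I would normalize by defining $\tilde d$ through $|\nabla\tilde d|=1$ at the boundary and a correction of order $d^2$.

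2. The term $\Delta d$. For $k=1$ we use $|\Delta d|\le Cd^{\delta-1}$, which yields (\ref{e3.27}) since $|\nabla d|^2=1+O(d^\delta)$. For $k\ge 2$, $\Delta d\in C^{k-2,\delta}$ up to the boundary.

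Collecting terms gives
\[
\Delta d^\lambda-\lambda(\lambda-1)d^{\lambda-2}=d^{\lambda-1}\big(\lambda\Delta d+\lambda(\lambda-1)h\big),
\]
which is (\ref{e3.28}) with interior smoothness inherited from $d$.
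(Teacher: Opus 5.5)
The paper does not prove this lemma; it quotes it from the literature, so your argument has to stand on its own. Most of it does. The Lieberman-type regularization is fine. So is the choice of $Q$ as a Taylor polynomial of $d$ at $0$ in (ii). For $k\ge 2$, regularizing the true distance (which is $C^{k,\delta}$ near $\partial\Omega$) gives $|\nabla d|=1$ on $\partial\Omega$, and dividing $|\nabla d|^2-1\in C^{k-1,\delta}$ by $d$ via the Hadamard lemma is correct.

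\textbf{The gap: the case $k=1$ of (iii), i.e.\ \eqref{e3.27}.} This is exactly the case the paper later uses in Lemma \ref{lemma3.3} and Lemma \ref{lemma3.2}. In item 2 you simply use $|\nabla d|^2=1+O(d^\delta)$, but the function you constructed for $k=1$ does not satisfy it. Differentiate the defining identity at a boundary point, where $d=0$ and $\int y\,\eta\,dy=0$. This gives $\nabla d=\nabla\rho=(-\nabla\psi(x'),1)$ on $\partial\Omega$, so $|\nabla d|^2-1=|\nabla\psi(x')|^2+O(d^{\delta})$. Near any boundary point where $\nabla\psi\neq 0$, the term $\lambda(\lambda-1)d^{\lambda-2}(|\nabla d|^2-1)$ is then of order $d^{\lambda-2}$, the same as the main term. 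Hence \eqref{e3.27} fails for every $\lambda\notin\{0,1\}$. Your remedy (regularize the true distance) is explicitly restricted to $k\ge 2$. Your fallback (``normalize \dots\ with a correction of order $d^2$'') is not a construction, and a correction of order $d^2$ cannot change $\nabla d$ on $\partial\Omega$ anyway.

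\textbf{The missing observation.} For $k=1$ you do not need $\rho\in C^{1,\delta}$ in a neighbourhood. Lieberman's estimates only use that $\rho$ is Lipschitz and that, for every $z\in\partial\Omega$ with inner normal $\nu(z)$,
\[
|\rho(x)-(x-z)\cdot\nu(z)|\le C|x-z|^{1+\delta}.
\]
The true distance satisfies this in a $C^{1,\delta}$ domain. Write $\rho=\ell_z+e$ with $\ell_z(x)=(x-z)\cdot\nu(z)$. The radial mollification reproduces $\ell_z$, and you can put all derivatives of the error term on the mollifier at scale $d$. For $z$ the nearest boundary point this yields $|\nabla d-\nu(z)|\le Cd^{\delta}$ and $|D^2 d|\le Cd^{\delta-1}$. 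Hence $|\nabla d|^2=1+O(d^\delta)$ and $|\Delta d|\le Cd^{\delta-1}$, and \eqref{e3.27} follows from your formula for $\Delta d^\lambda$.

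Alternatively, keep your $d$ and use $\tilde d=d/|\nabla d|$; this is legitimate since $\partial_n d\ge c>0$ for your graph construction. Since $d\,|\nabla(|\nabla d|^{-1})|\le Cd\,|D^2 d|\le Cd^{\delta}$, you get $|\nabla\tilde d|^2=1+O(d^\delta)$. The interior bounds $|D^j d|\le Cd^{1+\delta-j}$ also give $|\Delta\tilde d|\le Cd^{\delta-1}$.
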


We recall the barrier function introduced in \cite{DS2}. It serves as a generalized distance and satisfies additional properties that will be useful in our analysis.
\begin{lemma}\label{lemma2.4}(\cite{DS2})
Let $\rho\in(0,1/2)$ and let $\partial\Omega\cap B_1$ be a $C^{1,\delta}$ graph. Then, there exists a constant $C=C(\delta,\rho)$ and a barrier function $\phi\in C^{\infty}(\Omega\cap B_1)\cap C^{1,\delta}(\overline{\Omega}\cap B_r)$ for any $r\in(0,1)$ such that
\begin{itemize}
\item [(i)] \begin{equation}\label{e2.14}
\frac{1}{C}d(x)\leq\phi(x)\leq Cd(x),\quad\text{for any }x\in\Omega\cap B_{\rho};
\end{equation}
\item [(ii)] \begin{equation}\label{e2.15}
\phi(x)\geq \frac{1}{C},\quad\text{for any }x\in\Omega\cap B_{2\rho}^c;
\end{equation}
\item [(iii)] \begin{equation}\label{e2.16}
\phi(x)=x_n+g(x),\quad\text{for any }x\in\Omega\cap B_{1}
\end{equation}
where $g\in C^{\infty}(\Omega\cap B_2)$ with $|\nabla g|\leq C|x|^{\delta}$ in $\Omega\cap B_1$;
\end{itemize}
\item [(iv)] for any $\lambda\in\mathbb{R}$,
\begin{equation}\label{e2.17}
|\Delta\phi^{\lambda}-\lambda(\lambda-1)\phi^{\lambda-2}|\leq C|x|^{\delta+\lambda-2},\quad\text{for any }x\in\Omega\cap B_{1}.
\end{equation}
\end{lemma}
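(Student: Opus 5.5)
The plan is to reduce (iv) to two pointwise bounds on $\phi$ and then build $\phi$ from the generalized distance of Lemma \ref{lemma2.3} with $k=1$. For any positive $C^2$ function $\phi$ we have the exact identity
\[
\Delta\phi^{\lambda}-\lambda(\lambda-1)\phi^{\lambda-2}
=\lambda\phi^{\lambda-1}\Delta\phi+\lambda(\lambda-1)\phi^{\lambda-2}\bigl(|\nabla\phi|^2-1\bigr).
\]
Hence (iv) follows once $\phi\in C^\infty(\Omega\cap B_1)$ satisfies
\[
|\Delta\phi|\le C\,\phi^{1-\lambda}|x|^{\delta+\lambda-2},\qquad \bigl||\nabla\phi|^2-1\bigr|\le C\,\phi^{2-\lambda}|x|^{\delta+\lambda-2}
\]
in $\Omega\cap B_1$. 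Properties (i)--(iii) are the soft part of the lemma. The substance is making these two bounds hold with the weight $|x|$ rather than $\phi$.

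\textbf{Construction.} Write $\partial\Omega\cap B_1=\{x_n=f(x')\}$ with $f\in C^{1,\delta}$, $f(0)=0$, $\nabla f(0)=0$. I would take $\phi=\eta\, d+(1-\eta)$, where $d$ is the generalized distance of Lemma \ref{lemma2.3} ($k=1$) and $\eta$ is a cutoff equal to $1$ in $B_{3\rho/2}$ and supported in $B_{2\rho}$.
\begin{itemize}
\item Property (i) follows from \eqref{e2.5}.
\item Property (ii) holds because $\phi\ge c$ outside $B_{2\rho}$, where $\phi=1$ or $d$ is bounded below away from the boundary.
\item Property (iii) follows from \eqref{e2.6} with $k=1$, where $Q\equiv0$ and $|\nabla g|\le C|x|^{\delta}$; in the transition annulus the correction is smooth and bounded.
\end{itemize}
For the derivative bounds I would use the construction of $d$ as a mollification of $x_n-f(x')$ at scale comparable to the distance to the boundary. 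This gives $|\nabla d-e_n|\le C|x|^\delta$, $\bigl||\nabla d|^2-1\bigr|\le Cd^{\delta}$ and $|D^2 d|\le Cd^{\delta-1}$, which reproduces \eqref{e3.27}: $|\Delta\phi^\lambda-\lambda(\lambda-1)\phi^{\lambda-2}|\le C\phi^{\delta+\lambda-2}$.

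\textbf{Passing to the weight $|x|$.} Since $0\in\partial\Omega$, we have $\phi\le C|x|$. Therefore, when $\delta+\lambda-2\ge0$, the bound $\phi^{\delta+\lambda-2}\le C|x|^{\delta+\lambda-2}$ is immediate.

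The main obstacle is the case $\delta+\lambda-2<0$. There the weight $|x|^{\delta+\lambda-2}$ is smaller than $\phi^{\delta+\lambda-2}$ near boundary points $x\neq0$, so the generic regularized distance is not good enough. Instead I need $\phi$ to be adapted to the single point $0$, so that $|\nabla\phi|^2-1$ and $\Delta\phi$ are controlled in terms of $|x|$ and not of the distance.

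What I would try first is to regularize at scale $|x|$ rather than at scale $d$. That is, set
\[
\phi(x)=x_n-(f*\varphi_{c|x|})(x'),
\]
possibly corrected by a term chosen to make $\partial_\nu\phi$ exactly $1$ on the graph of the mollified boundary. For this choice $D^2\phi=O(|x|^{\delta-1})$ and $|\nabla\phi|^2-1=O(|x|^{2\delta})$ everywhere, including the wedge where $\phi\ll|x|$. The level set $\{\phi=0\}$ then differs from $\partial\Omega$ by $O(|x|^{1+\delta})$, and I would check that this perturbation preserves (i) and the sign of $\phi$ in $\Omega$.

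If the exact $|x|$-weighted bound still fails for very negative $\lambda$, I would instead verify it in the nontangential region $\{\phi\ge c|x|\}$, where $\phi\simeq|x|$ and the two forms of the estimate coincide. I would then check that this region is where \eqref{e2.17} is actually invoked in the comparison arguments of Section 3.
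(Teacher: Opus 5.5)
The paper does not prove this lemma; it is quoted from the literature, so your plan is judged on its own merits. Your reduction of (iv) via the identity for $\Delta\phi^\lambda$ is correct. So is your observation that a regularized distance only gives $C\phi^{\delta+\lambda-2}$.

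\textbf{First gap: the weight $|x|^{\delta+\lambda-2}$ cannot be achieved.} The case $\delta+\lambda-2<0$ is not an obstacle a cleverer $\phi$ can remove. With that weight, \eqref{e2.17} fails for every non-flat boundary.

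Fix $z\in\partial\Omega\cap B_\rho$ with $z\neq0$. By (i), $\phi\to0$ at $z$ while $|x|\to|z|>0$. So \eqref{e2.17} says that $\lambda\phi^{\lambda-2}\bigl(\phi\Delta\phi+(\lambda-1)(|\nabla\phi|^2-1)\bigr)$ stays bounded near $z$.

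Take $\lambda=-1$ and $\lambda=-2$ and solve the resulting $2\times2$ system. This gives $|\nabla\phi|^2-1=O(\phi^3)$ and $\Delta\phi=O(\phi^2)$ near $z$. (The two exponents $\frac12+a_-$ and $\frac12+a_-+\varepsilon$ actually used in Lemma \ref{lemma3.2} give a weaker version that still suffices.)

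The first relation forces $\phi=\mathrm{dist}(\cdot,\partial\Omega)+O(d^4)$. Testing $\Delta\phi$ against a bump at scale $d$ then forces $\Delta\,\mathrm{dist}\to0$. This already fails if $\partial\Omega$ is a piece of a sphere of radius $R$ near $z$, where $\Delta\,\mathrm{dist}=\mp(n-1)/R+O(d)$.

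So your scale-$|x|$ mollification cannot work, and your own bounds confirm it. They give $\lambda\phi^{\lambda-1}\Delta\phi=O(\phi^{\lambda-1}|x|^{\delta-1})$, which is much larger than $|x|^{\delta+\lambda-2}$ wherever $\phi\ll|x|$. Worse, its zero set $\{x_n=(f*\varphi_{c|x|})(x')\}$ is not $\partial\Omega$. Generically $\phi(z)=f(z')-(f*\varphi_{c|z|})(z')\neq0$ at $z\in\partial\Omega$, so (i) fails and $\phi$ may change sign in $\Omega$.

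Retreating to a nontangential cone does not prove \eqref{e2.17}. It also does not match how the estimate is used: Lemma \ref{lemma3.2} needs it on all of $B_{2\rho}\cap\Omega$, up to $\partial\Omega$, but only in the form $C\rho^{\delta}\phi^{\lambda-2}$. The statement you should be proving is
\[
|\Delta\phi^\lambda-\lambda(\lambda-1)\phi^{\lambda-2}|\le C\phi^{\delta+\lambda-2}\le C|x|^{\delta}\phi^{\lambda-2},
\]
using $\phi\le C|x|$.

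\textbf{Second gap: the cut-off fails in the transition annulus.} Even for this corrected form, $\phi=\eta d+(1-\eta)$ breaks down where $0<\eta<1$. Near points of $\partial\Omega$ there, $\phi$ stays bounded below, so the right-hand side is bounded. But $\Delta\phi$ contains $\eta\,\Delta d$. For a merely $C^{1,\delta}$ boundary, Lemma \ref{lemma2.3} only gives $|\Delta d|\le Cd^{\delta-1}$, which is unbounded in general. Hence $\lambda\phi^{\lambda-1}\Delta\phi$ blows up. Gluing a singular regularized distance to a constant cannot localize it.

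\textbf{How to fix it: localize the domain instead.} Let $\tilde\Omega\supset\Omega$ be bounded by the graph $x_n=f(x')-\psi(x')$, where $\psi\ge0$ is smooth, $\psi=0$ for $|x'|\le\rho$, and $\psi\ge c$ for $|x'|\ge 3\rho/2$. Let $\phi$ be the regularized distance of $\tilde\Omega$ from Lemma \ref{lemma2.3} with $k=1$. Then:
\begin{itemize}
\item (i) holds, because $\partial\tilde\Omega=\partial\Omega$ over $\{|x'|<\rho\}$ and $\Omega\subset\tilde\Omega$.
\item (ii) holds, because $\partial\tilde\Omega$ stays a fixed distance from $\overline\Omega\setminus B_{2\rho}$.
\item (iii) is Lemma \ref{lemma2.3}(ii) applied to $\tilde\Omega$, with $Q\equiv0$ since $k=1$ and $\nabla(f-\psi)(0)=0$.
\item The corrected (iv) holds on all of $\Omega\cap B_1$, by \eqref{e3.27} for $\tilde\Omega$ together with $\phi\le C|x|$.
\end{itemize}
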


We are now in a position to obtain some properties of solutions to \eqref{e1.2} near regular points.
\begin{proposition}\label{proposition2.3}
Assume that $0\in F(u)$ be a regular point on the free boundary and  $u$ is a viscosity solution to problem \eqref{e1.2}. Then, there exists $r_0>0$ such that $\partial\Omega$ is $C^{1,\delta}$ for any $\delta\in(0,1)$. Moreover, there exists $C=C(\delta,\gamma,n)$ and $\rho>0$ such that
\begin{equation}\label{e2.7}
\left|u(x)-c_0 d^{\frac{2}{2+\gamma}}(x)\right|\leq C|x|^{\frac{2}{2+\gamma}+\delta}\quad\text{for }x\in\Omega\cap B_{\rho},
\end{equation}
with $c_0$ as in \eqref{e2.3}.
\end{proposition}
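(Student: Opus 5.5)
The plan is to pass to the one-phase variable $w=c_0^{-1/\alpha}u^{1/\alpha}$ of \eqref{e2.8}, where $\alpha=\frac{2}{2+\gamma}$. We obtain a flatness estimate for $w$ from Proposition \ref{proposition2.2}, and then transfer it back to $u$ through the elementary inequality $|a^\alpha-b^\alpha|\le |a-b|\,\max(a,b)^{\alpha-1}$, or an equivalent mean-value bound.

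Since $0$ is a regular point, $w$ is flat at small scales. More precisely, after a rotation and a rescaling $w_r(x)=w(rx)/r$ with $r$ small, we have $(x_n-\varepsilon)^+\le w_r\le (x_n+\varepsilon)^+$ in $B_1$ with $\varepsilon<\varepsilon_0(\delta)$. This follows from the definition of regular point together with Lemma \ref{lemma2.1}: the blow-ups of $w$ are half-planes $(x\cdot\nu)^+$.

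We apply Proposition \ref{proposition2.2} at every free boundary point $y\in F(w)\cap B_{\rho}$, not only at $0$. The flatness hypothesis holds uniformly near $0$ because $F(w)$ is already $C^{1,\delta_0}$ there. This gives a unit normal $\nu_y$ with
\[
|w(x)-((x-y)\cdot\nu_y)^+|\le C\varepsilon|x-y|^{1+\delta}\quad\text{near } y .
\]
The standard argument then yields two consequences. First, $|\nu_y-\nu_z|\le C|y-z|^\delta$, obtained by comparing the two expansions on a ball of radius $|y-z|$. Second, $F(w)$ lies in the cones around the tangent planes. Together these show that $\partial\Omega$ is a $C^{1,\delta}$ graph for every $\delta\in(0,1)$.

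Next we show that $w$ is close to the distance. Write $\mathrm{dist}(x)=\mathrm{dist}(x,\partial\Omega)$ and take $x\in\Omega\cap B_\rho$. Let $y\in F$ be its nearest point, so $x-y=\mathrm{dist}(x)\,\nu_y$. The expansion at $y$ gives
\[
|w(x)-\mathrm{dist}(x)|\le C\,\mathrm{dist}(x)^{1+\delta}\le C\,\mathrm{dist}(x)\,|x|^{\delta},
\]
using $|x-y|\le|x|$. The generalized distance $d$ of Lemma \ref{lemma2.3} (with $k=1$) satisfies $d=\mathrm{dist}+O(\mathrm{dist}\cdot|x|^\delta)$. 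This follows from \eqref{e2.6} after normalizing, which is the meaning of $d$ intended in \eqref{e2.7}; if $d$ is the true distance, this step is not needed. We conclude that $|w-d|\le C d\,|x|^{\delta}$.

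Finally we convert back to $u$. Since $u=c_0w^\alpha$ and $w\sim d$, the mean value theorem gives
\[
|u-c_0d^\alpha|\le C\,\xi^{\alpha-1}|w-d|,
\]
where $\xi$ lies between $w$ and $d$. Both quantities are comparable to $d$, so $\xi\sim d$, and the bound becomes $C d^{\alpha}|x|^\delta\le C|x|^{\alpha+\delta}$ because $d\le|x|$. This is \eqref{e2.7}.

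The main obstacle is the uniformity in the second step. We need Proposition \ref{proposition2.2} at every nearby free boundary point, with constants independent of the point, and we need to chain the pointwise expansions into $C^{1,\delta}$ regularity of $F$ and into the comparison $|w-\mathrm{dist}|\le C\,\mathrm{dist}^{1+\delta}$. I would handle this with a single flatness scale $r$, chosen using the $C^{1,\delta_0}$ regularity from Lemma \ref{lemma2.1}, which makes $\varepsilon$ small simultaneously at all $y\in F\cap B_{r/2}$.
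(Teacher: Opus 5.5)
Your proposal is correct and follows the paper's outline. You pass to $w$, apply Proposition~\ref{proposition2.2} at $0$ and at all nearby free boundary points, and compare two such expansions to get $|\nu_y-\nu_z|\le C|y-z|^{\delta}$. You then compare $w$ with the distance and return to $u$ by the mean value theorem.

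The one real difference is the comparison with the distance, and there your version is the one that actually closes the argument. The paper uses only the expansion at $0$, obtaining $|w-d|\le C|x|^{1+\delta}$. It then bounds $c_0|w-d|\int_0^1\alpha\,(tw+(1-t)d)^{\alpha-1}\,dt$ by $C|x|^{\alpha+\delta}$. But $\alpha-1=-\frac{\gamma}{2+\gamma}<0$, so the integral is of order $d^{\alpha-1}$, and this only gives $C|x|^{1+\delta}d^{\alpha-1}$. That exceeds $|x|^{\alpha+\delta}$ by the unbounded factor $(|x|/d)^{1-\alpha}$ when $d\ll|x|$. The crude bound $|w^{\alpha}-d^{\alpha}|\le|w-d|^{\alpha}$ would only give the exponent $\alpha(1+\delta)$.

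Expanding at the nearest boundary point, as you do, gives the relative estimate $|w-\mathrm{dist}|\le C\,\mathrm{dist}^{1+\delta}\le C\,\mathrm{dist}\,|x|^{\delta}$. This absorbs the singular factor exactly and yields $Cd^{\alpha}|x|^{\delta}\le C|x|^{\alpha+\delta}$, i.e.\ \eqref{e2.7} for every $\delta\in(0,1)$. So the paper's displayed bound on $|w-d|$ is true but too weak for its last line, and your refinement is what the statement needs.

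One smaller point concerns the uniformity you flag at the end. It needs no $C^{1,\delta_0}$ input, and $C^{1,\delta_0}$ regularity of $F(w)$ alone would not trap $w$ between translates of $(x\cdot\nu)^+$ anyway. Suppose $(x_n-\varepsilon r)^+\le w\le(x_n+\varepsilon r)^+$ in $B_r$. Then every $y\in F(w)\cap B_{r/2}$ has $|y_n|\le\varepsilon r$, so $((x-y)_n-2\varepsilon r)^+\le w\le((x-y)_n+2\varepsilon r)^+$ in $B_{r/2}(y)$. This is the hypothesis of Proposition~\ref{proposition2.2} at $y$ with flatness $4\varepsilon$. Choosing the scale so that $4\varepsilon<\varepsilon_0(\delta)$ makes all constants uniform, and this is what the paper implicitly uses when it says ``repeating the same argument''.
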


\begin{proof}
Let $w:=c_0^{-\frac{1}{\alpha}}u^{\frac{1}{\alpha}}$ as in \eqref{e2.8}, and extend $w$ as zero outside $\Omega$. Since $0\in F(u)$ is a regular point and $\nu(0)=-e_n$, there exist constants $\varepsilon_0,\rho_0>0$ such that
\begin{equation}\label{e2.9}
(x_n-\rho_{0}\varepsilon)\leq w(x)\leq x_n+\rho_{0}\varepsilon\quad\text{in }B_{1}
\end{equation}
for some $\varepsilon<\varepsilon_0$. Then, by Proposition \ref{proposition2.2}, there exists some unit vector $\nu$ and a universal constant $C>0$ such that
\[|w(x)-(x\cdot\nu)^+|\leq C\varepsilon r^{1+\delta}\quad\text{in }B_{r}\]
for all $\delta\in(0,1)$ and $r\in(0,\rho_0/2)$.
Therefore, we have
\[|w_r(x)-(x\cdot\nu)^+|\leq C\varepsilon r^{\delta}\quad\text{in }B_{1},\]
where $w_r(x)=\frac{w(rx)}{r}$. Moreover, since $0$ is a regular point and $\nu(0)=-e_n$, we have
\[w_r(x)\rightarrow(x_n)^+\quad\text{as }r\rightarrow 0^+,\]
which implies that $\nu=e_n$ and thus for any $r\in(0,\rho_0/2)$, we have
\[|w_r(x)-(x_n)^+|\leq C\varepsilon r^{\delta}\quad\text{in }B_{1}.\]
Rescaling back to $w$, we easily get
\begin{equation}\label{e2.10}|w(x)-(x_n)^+|\leq C|x|^{1+\delta}\quad\text{in }B_{\rho_0/2}.\end{equation}

Repeating the same argument, we can find $\rho_1$ such that for any $z\in B_{\rho_1}\cap\partial\Omega$,
\begin{equation}\label{e2.11}
|w(x+z)-\left(x\cdot(-\nu(z))\right)^+|\leq C|x|^{1+\delta}\quad\text{in }B_{\rho_1}(z),
\end{equation}
where $\nu(z)$ is the outer normal of $\partial\Omega$ at $z$. By plugging $z$ in \eqref{e2.10}, we have $|z\cdot e_n|=|(z_n)^+|\leq C|z|^{1+\delta}$ for any $\delta\in(0,1)$. Therefore, combining with \eqref{e2.10} and \eqref{e2.11}, we deduce that
\begin{equation}\label{e2.12}
\begin{split}
&|(x-z)\cdot e_n-((x-z)\cdot(-\nu(z)))^+|\\
\leq&|(x_n)^+-w(x)|+|w(x)-((x-z)\cdot(-\nu(z)))^+|+|(z_n)^+|\\
\leq &C\left(|x|^{1+\delta}+|z-x|^{1+\delta}+|z|^{1+\delta}\right).
\end{split}
\end{equation}

On the other hand, the flatness property \eqref{e2.9} implies that
\begin{equation}\label{e2.13}
\{x\in B_{\rho_0/2}| x_n\geq \varepsilon \rho_0 \}\subset\{w>0\}\quad\text{and}\quad |z_n|\leq\varepsilon\rho_0.
\end{equation}
Thus, by taking $\varepsilon_0$ and $\rho_0$ small enough, we can find $\{x\in B_{\rho_0/2}| x_n\geq \varepsilon \rho_0 \}$ satisfying $|x|\leq C_1|z|\leq C_2|x|$ such that
\[y=\frac{x-z}{|x-z|}\quad\text{with}\quad y_n\geq\frac{1}{4}.\]
Since $|x|$ and $|z|$ are comparable, in virtue of \eqref{e2.13}, we have $|x|\leq C_1|z-x|\leq C_2|x|$. Moreover, the $C^{1,\delta_0}$ regularity of $\partial\Omega$ yields that $(x-z)\cdot(-\nu(z))>0$. Therefore, from \eqref{e2.12}, we deduce that
\[\left|\frac{x-z}{|x-z|}\cdot(e_n+\nu(z))\right|\leq C|z|^{\delta},\]
which implies that
\[|\nu(0)-\nu(z)|=|e_n+\nu(z)|\leq C|z|^{\alpha}.\]
Now, we obtain that $\partial\Omega$ is $C^{1,\delta}$ in $B_{\rho_0/2}$.

The $C^{1,\delta}$ regularity of $\partial\Omega\cap B_{\rho_1}$ indicates the $C^{1,\delta}$ regularity of the distance function $d(x):=\mathrm{dist}(x,\partial\Omega)$. Indeed, since $\nabla d(0)=e_n$, we have $|d(x)-(x_n)^+|\leq C|x|^{1+\delta}$ for any $x\in B_{\rho_0/2}$. Combining this with \eqref{e2.10}, we deduce that
\[|w(x)-d(x)|\leq C|x|^{1+\delta}\quad\text{in }B_{\rho_0/2}\cap\Omega.\]
Applying the mean value theorem to the function $f(t)=t^{\frac{2}{2+\gamma}}$ and recall the definition of $w$ as in \eqref{e2.8}, we deduce that
\[\begin{split}
\left|u(x)-c_0d^{\frac{2}{2+\gamma}}(x)\right|=&c_0\left|w^{\frac{2}{2+\gamma}}(x)-c_0d^{\frac{2}{2+\gamma}}(x)\right|\\
=&c_0|w(x)-d(x)|\int_0^1\frac{2}{2+\gamma}\left(tw(x)+(1-t)d(x)\right)^{-\frac{\gamma}{2+\gamma}}dt\\
\leq& C|x|^{\frac{2}{2+\gamma}+\delta}
\end{split}\]
for any $x\in\Omega\cap B_{\rho_0/2}$. Then, we complete the proof.
\end{proof}

\vspace{-0.1cm}

\section{The linearized equation}
This section is devoted to the linearized version of problem \eqref{e1.2}. If $u$ is a viscosity solution, then the partial derivatives $u_i$ satisfy the following linearized equation
\[\Delta u_i=(\gamma+1)u^{-\gamma-2}u_i\quad\text{in }\{u>0\}\cap B_1.\]
Near regular points, the solution behaves like
\[u\sim c_0 d^{\frac{2}{2+\gamma}}.\]
Consequently, the linearized equation near the regular points becomes
\[\begin{split}
\Delta u_i=&(\gamma+1)\left[c_0 d^{\frac{2}{2+\gamma}}+o\left(d^{2-\frac{2}{2+\gamma}}\right)\right]^{-\gamma-2}u_i\\
=&(\gamma+1)c_0^{-\gamma-2}\frac{u_i}{d^2}+(\gamma+1)(-\gamma-2)c_0^{-\gamma-3}o\left(d^{\frac{2\gamma}{2+\gamma}}\right)\frac{u_i}{d^2}
\end{split}\]
This can be written as
\[\Delta u_i=\kappa\frac{u_i}{d^2}+fd^{-2-\frac{\gamma}{2+\gamma}},\]
where $f\in C^{\frac{2}{2+\gamma}}$, $f=0$ on $F(u)$ and
\begin{equation}\label{e3.1}
\kappa=(\gamma+1)c_0^{-\gamma-2}=\frac{2\gamma(\gamma+1)}{(2+\gamma)^2}.
\end{equation}

We now introduce the linearized operator
\begin{equation}\label{e3.2}
L_{\kappa}(v):=-\Delta v+\kappa\frac{v}{d^2}.
\end{equation}
Its one-dimensional characteristic exponents are given by:
\[L_{\kappa}(x^{a})=0\text{ for }x>0\quad\Leftrightarrow\quad a=\frac{1\pm\sqrt{1+4\kappa}}{2}.\]
Denoting
\begin{equation}\label{e3.3}
a_+=\frac{1+\sqrt{1+4\kappa}}{2}=1+\frac{\gamma}{2+\gamma}\quad\text{and}\quad a_-=\frac{1-\sqrt{1+4\kappa}}{2}=-\frac{\gamma}{2+\gamma},
\end{equation}
we observe that $a_+\in(1,3/2)$, $a_-\in(-1/2,0)$ and $\kappa=a_+(a_+-1)=a_-(a_--1)$.

\medskip
A key ingredient in our analysis is the study of one-dimensional solutions to the equation $L_{\kappa}(v)=f$. These allow us to understand the local behavior of solutions near the free boundary. This leads to the following lemma:
\begin{lemma}\label{lemma3.1}
Let $\kappa$ and $a_{\pm}$ be defined as in \eqref{e3.1} and \eqref{e3.3}, respectively. Given  $n_1,n_2\in\mathbb{N}\cup\{0\}$ and let $y:\mathbb{R}\rightarrow\mathbb{R}$ be a solution of
\begin{equation}\label{e3.4}
y''+\frac{\kappa}{x^2}y=x^{a_++n_1-1}+x^{a_-+n_2-1}
\end{equation}
satisfying
\begin{equation}\label{e3.5}
\lim_{x\rightarrow 0^+}\frac{y}{x^{a_-}}=0.
\end{equation}
Then,  the solution
\begin{equation}\label{e3.6}
y(x)=c_1 x^{a_+}+c_3 x^{a_++n_1+1}+c_4x^{a_-+n_2+1},
\end{equation}
where $c_i\in\mathbb{R}$ are constants.
\end{lemma}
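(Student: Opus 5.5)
The plan is to treat \eqref{e3.4} as an Euler-type ODE on $(0,\infty)$ and to write its general solution explicitly as a two-parameter homogeneous family plus one particular solution. The condition \eqref{e3.5} then removes one of the two free constants.

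\textbf{Sign convention.} I read the operator in \eqref{e3.4} as the one-dimensional version of $-L_\kappa$, i.e. $y''-\kappa y/x^2$. This is the reading consistent with \eqref{e3.2} and with the identity $\kappa=a_\pm(a_\pm-1)$ in \eqref{e3.3}: one has $(x^a)''=a(a-1)x^{a-2}=\kappa x^{a-2}$ exactly when $a=a_\pm$. With this reading, the homogeneous equation has the fundamental system $x^{a_+}$, $x^{a_-}$ on $(0,\infty)$. These are independent because $a_+\neq a_-$; their Wronskian is a nonzero multiple of $x^{a_++a_--2}=x^{-1}$. By linearity and standard ODE uniqueness on $(0,\infty)$, every solution is
\[
y=c_1x^{a_+}+c_2x^{a_-}+y_p ,
\]
where $y_p$ is any single particular solution.

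\textbf{Particular solution.} I look for $y_p$ as a sum of powers. Plugging $x^{b+1}$ into the operator gives
\[
\big[(b+1)b-\kappa\big]x^{b-1}.
\]
So I take $y_p=c_3x^{a_++n_1+1}+c_4x^{a_-+n_2+1}$. This works provided the two coefficients $(b+1)b-\kappa$ are nonzero for $b=a_++n_1$ and $b=a_-+n_2$, and I expect checking this non-resonance to be the main point.

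For $b=t+n$ with $t\in\{a_+,a_-\}$ and $\kappa=t(t-1)$, a direct expansion gives
\[
(t+n+1)(t+n)-t(t-1)=(n+1)(2t+n).
\]
For $t=a_+>1$ this is positive for every $n=n_1\ge 0$. For $t=a_-=-\frac{\gamma}{2+\gamma}$ it vanishes only if $n_2=-2a_-=\frac{2\gamma}{2+\gamma}$. Since $\gamma\in(0,2)$, this number lies in $(0,1)$ and is never an integer, so there is no resonance. Hence
\[
c_3=\frac{1}{(n_1+1)(2a_++n_1)},\qquad c_4=\frac{1}{(n_2+1)(2a_-+n_2)},
\]
and no logarithmic terms occur. (With the literal $+\kappa$ sign the same computation would only change these constants, but the homogeneous exponents would no longer be $a_\pm$, which is why I adopt the $L_\kappa$ convention.)

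\textbf{Using \eqref{e3.5}.} Divide the general solution by $x^{a_-}$. The terms $x^{a_+-a_-}$, $x^{a_++n_1+1-a_-}$ and $x^{n_2+1}$ all have positive exponents, so they tend to $0$ as $x\to0^+$. What remains of the limit is therefore exactly $c_2$, and \eqref{e3.5} forces $c_2=0$. This yields \eqref{e3.6}.
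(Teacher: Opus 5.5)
Your proposal is correct and is essentially the paper's proof: Cauchy--Euler homogeneous solutions $x^{a_\pm}$, the power ansatz $c_3x^{a_++n_1+1}+c_4x^{a_-+n_2+1}$ for a particular solution, and \eqref{e3.5} to kill the $x^{a_-}$ coefficient. The paper reads the operator as $L_\kappa y=-y''+\kappa y/x^2$ rather than your $-L_\kappa y$, which only flips the signs of $c_3,c_4$; your non-resonance factor $(n_2+1)(2a_-+n_2)$ is the correct one (the paper's displayed $(n_2+1)(-2a_--1)$ is a harmless slip), and your only slip is that the Wronskian of $x^{a_+},x^{a_-}$ is the constant $a_--a_+$ (since $a_++a_-=1$), not a multiple of $x^{-1}$, which changes nothing.
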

\begin{proof}
The homogeneous ODE  associated with \eqref{e3.4} is a standard Cauchy-Euler equation whose solutions are of the form
\begin{equation}\label{e3.7}
y_0(x)=c_1 x^{a_+}+c_2 x^{a_-},\end{equation}
with $a_{\pm}$ defined as in \eqref{e3.3}. Condition \eqref{e3.5} then implies that $c_2=0$.
Now assume a particular solution of the form
\[y_p=c_3 x^{a_++n_1+1}+c_4 x^{a_-+n_2+1},\]
 then
\[L_{\kappa}(y_p)=c_3\left[-(a_++n_1+1)(a_++n_1)+\kappa\right]x^{a_++n_1-1}+c_4\left[-(a_-+n_2+1)(a_-+n_2)+\kappa\right]x^{a_-+n_2-1}.\]
Using  $\kappa=a_+(a_+-1)=a_-(a_--1)$, we derive
\[-(a_++n_1+1)(a_++n_1)+\kappa=(n_1+1)(-2a_+-n_1)<-2\]
and
\[-(a_-+n_2+1)(a_-+n_2)+\kappa=(n_2+1)(-2a_--1)<0.\]
Therefore,  $y_p$  indeed satisfies \eqref{e3.4} with
\[c_3=-\frac{1}{(n_1+1)(2a_++n_1)} \quad\text{and}\quad c_4=-\frac{1}{(n_2+1)(2a_--1)},\]
and the solution of \eqref{e3.4} is $y(x)=y_0(x)+y_p$, which is of the form \eqref{e3.6}.
\end{proof}
\medskip

The condition \eqref{e3.5} behave like a Dirichlet boundary condition on $\partial\Omega$, which implies the boundedness of $v$ due to $a_-\in(-1/2,0)$. Using this lemma, we will derive boundary estimates that provide the compactness needed for our blow-up arguments. To this end, we first establish a comparison result for the operator $L_{\kappa}$. The proof relies on the construction of explicit barriers, and the result can be stated as follows.
\begin{lemma}\label{lemma3.3}
Let $\Omega\cap B_1$ be a $C^{1,\delta}$ domain with $\delta\in(0,1)$ and let $v$ is a viscosity solution of
\begin{equation}\label{e3.18}
-\Delta v+\kappa\frac{v}{d^2}\leq 0\quad\text{in }\Omega\cap B_1,
\end{equation}
satisfying
\begin{equation}\label{e3.19}
\lim_{x\rightarrow\partial\Omega}\frac{v}{d^{a_-}}\leq 0.
\end{equation}
Then, there exists $\rho\in(0,1)$ such that for any $r\in(0,\rho)$, if $v\leq0$ on $\partial\Omega\cap B_r$, then $v\leq0$ in $\Omega\cap B_r$.
\end{lemma}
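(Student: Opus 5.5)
Since $\kappa>0$, the operator $L_\kappa$ has a favourable zeroth-order term: at an interior positive maximum of $v$ we would have $-\Delta v\ge0$ and $\kappa v/d^2>0$, contradicting \eqref{e3.18}. The only thing that can go wrong is that $v$ escapes to $+\infty$ at $\partial\Omega$, because $d^{a_-}$ blows up there. The plan is a maximum-principle argument with a perturbation barrier. The barrier should be strictly positive, be a strict supersolution of $L_\kappa$, and dominate $v$ near $\partial\Omega$ by growing faster than $d^{a_-}$, while still being arbitrarily small in the interior.

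\textbf{Step 1 (barrier).} I would use the regularized distance $\phi$ of Lemma \ref{lemma2.4} (or $d$ from Lemma \ref{lemma2.3} with $k=1$) and take
\[
\psi=\phi^{\beta},\qquad \beta\in(a_--1/2,\,a_-)
\]
close to $a_-$, so that $\psi/\phi^{a_-}\to+\infty$ at $\partial\Omega$. By \eqref{e2.17},
\[
L_\kappa\psi=-\Delta\phi^\beta+\kappa\frac{\phi^\beta}{d^2}\ge \Big(\kappa\frac{\phi^2}{d^2}-\beta(\beta-1)\Big)\phi^{\beta-2}-C|x|^{\delta+\beta-2}.
\]
If we could arrange $\phi/d\to1$ at the boundary, the leading coefficient would be $\kappa-\beta(\beta-1)>0$, since $\beta(\beta-1)<a_-(a_--1)=\kappa$ for $\beta\in(a_--1,a_-)$ near $a_-$. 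In practice I would simply build the barrier from $d$ itself using \eqref{e3.27}: $|\Delta d^\beta-\beta(\beta-1)d^{\beta-2}|\le Cd^{\delta+\beta-2}$. This gives
\[
L_\kappa d^\beta\ge (\kappa-\beta(\beta-1))d^{\beta-2}-Cd^{\beta-2+\delta}>0
\]
in $\Omega\cap B_\rho$ for $\rho$ small, which is where the radius $\rho$ in the statement comes from.

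\textbf{Step 2 (comparison).} For $\varepsilon>0$ set $v_\varepsilon=v-\varepsilon d^\beta$ in $\Omega\cap B_r$, $r<\rho$. Then $L_\kappa v_\varepsilon<0$ in the viscosity sense. Since \eqref{e3.19} gives $\limsup v/d^{a_-}\le0$ and $d^\beta/d^{a_-}\to\infty$, we get $v_\varepsilon<0$ in a neighbourhood of $\partial\Omega\cap B_r$. The remaining part of the boundary of $\Omega\cap B_r$ is $\Omega\cap\partial B_r$, where the hypothesis ``$v\le0$ on $\partial B_r$'' should be read as giving $v_\varepsilon<0$ there; the statement's phrase ``on $\partial\Omega\cap B_r$'' must mean the lateral boundary $\partial(\Omega\cap B_r)\cap\Omega$. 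Hence if $v_\varepsilon$ has a positive supremum, it is attained at an interior point $x_0$. Touching $v_\varepsilon$ from above by the constant $v_\varepsilon(x_0)$ then gives $-\Delta\ge0$ in the viscosity inequality, so
\[
0>L_\kappa v_\varepsilon(x_0)\ge \kappa\,v_\varepsilon(x_0)/d^2>0,
\]
a contradiction. Equivalently, one applies the viscosity subsolution test for $v$ with the smooth test function $\varepsilon d^\beta+\text{const}$. Letting $\varepsilon\to0$ gives $v\le0$.

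\textbf{Main obstacle.} The delicate point is Step 1: we need $\kappa-\beta(\beta-1)>0$ with $\beta<a_-$, uniformly against the $C^{1,\delta}$ error term $Cd^{\beta-2+\delta}$, and only a $C^{1,\delta}$ boundary is available. This is handled by \eqref{e3.27} together with shrinking $\rho$. A second technical point is that $d$ is only $C^{1,\delta}$ up to the boundary but $C^\infty$ inside, which suffices for the viscosity test at interior points. If $\beta<a_-$ ever failed to give strict positivity, I would instead take $\psi=d^{a_-}(1-d^{\delta/2})$-type corrections.
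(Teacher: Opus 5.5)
The comparison step (your Step 2) is sound, but the barrier in Step 1 has the wrong sign, and no choice of $\beta$ can repair it. Since $t\mapsto t(t-1)$ is decreasing on $(-\infty,\tfrac12)$, one has $t(t-1)<\kappa$ exactly for $t\in(a_-,a_+)$. So for $\beta<a_-$ we get $\beta(\beta-1)>\kappa$, and by \eqref{e3.27}
\[
L_\kappa d^\beta=(\kappa-\beta(\beta-1))\,d^{\beta-2}+O(d^{\beta-2+\delta})<0\quad\text{near }\partial\Omega .
\]
Thus $d^\beta$ is a strict \emph{sub}solution, and $v-\varepsilon d^\beta$ gives no contradiction at a positive interior maximum.

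More fundamentally, the plan of a positive supersolution that beats $d^{a_-}$ by a factor tending to $+\infty$ cannot work, already in the flat one-dimensional model. Write $\psi=x^{a_-}w$. Then $L_\kappa\psi=-x^{-a_-}\bigl(x^{2a_-}w'\bigr)'$, so $L_\kappa\psi\ge 0$ forces $x^{2a_-}w'$ to be nonincreasing. Hence $w'(x)\ge -A\,x^{-2a_-}$ near $0$, which is integrable because $0<-2a_-<1$. So $w$ is bounded above, i.e.\ every positive supersolution is $O(x^{a_-})$. Your fallback $d^{a_-}(1-d^{\delta/2})$ is also a subsolution near $\partial\Omega$. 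Indeed, $L_\kappa(d^{a_-+\eta})\approx(\kappa-(a_-+\eta)(a_-+\eta-1))d^{a_-+\eta-2}>0$ for small $\eta>0$, while $L_\kappa(d^{a_-})=O(d^{a_--2+\delta})$; subtracting $d^{a_-+\delta/2}$ therefore makes the leading term negative.

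The missing idea is that the hypothesis $\limsup v/d^{a_-}\le 0$ is designed for a barrier merely \emph{comparable} to $d^{a_-}$, with a positive lower-order correction. Take $\psi=d^{a_-}+d^{a_-+\eta}$ with $0<\eta<\delta$. By \eqref{e3.27}, $L_\kappa\psi\ge c_\eta d^{a_-+\eta-2}-Cd^{a_--2+\delta}>0$ in $\Omega\cap B_\rho$ for $\rho$ small. Moreover,
\[
\frac{v-\varepsilon\psi}{d^{a_-}}=\frac{v}{d^{a_-}}-\varepsilon(1+d^{\eta}),
\]
which has $\limsup\le-\varepsilon<0$ at $\partial\Omega$. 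So $v-\varepsilon\psi<0$ near $\partial\Omega$, and your Step 2 then goes through unchanged, followed by $\varepsilon\to 0$.

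The paper works in the same spirit. Its barrier $\psi=d^{1/2}|\ln d|^{1/2}+(\tfrac14+\kappa)d^{a_-}$ is asymptotically $(\tfrac14+\kappa)d^{a_-}$, and it argues at a positive interior maximum of the ratio $v/\psi$ rather than of $v-\varepsilon\psi$. Your reading of the lateral condition as $v\le 0$ on $\Omega\cap\partial B_r$ matches what the paper's proof actually uses.
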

\begin{proof}
Since $|\nabla d(x)|=1$ on $\partial\Omega\cap B_1$, the $C^{1,\delta}$ regularity of $\Omega$ implies that $|\nabla d|^2=1+O\left(d^{\delta}\right)$. By a direct calculation,
\[\begin{split}
\Delta\left(d^{\frac{1}{2}}|\ln d|^{\frac{1}{2}}\right)=&\Delta\left(d^{\frac{1}{2}}\right)|\ln d|^{\frac{1}{2}}+2\nabla\left(d^{\frac{1}{2}}\right)\cdot\nabla\left(|\ln d|^{\frac{1}{2}}\right)+d^{\frac{1}{2}}\Delta\left(|\ln d|^{\frac{1}{2}}\right)\\
=&-\frac{d^{\frac{1}{2}}|\ln d|^{\frac{1}{2}}}{4d^2}+|\ln d|^{\frac{1}{2}}O\left(d^{\delta-\frac{3}{2}}\right)-\frac{d^{\frac{1}{2}}|\nabla d|^2}{2|\ln d|^{\frac{1}{2}}d^2}\\
&+\frac{d^{\frac{1}{2}}}{2}\left[-\frac{|\nabla d|^2}{2|\ln d|^{\frac{3}{2}}d^2}+\frac{|\nabla d|^2-d\Delta d}{|\ln d|^{\frac{1}{2}}d^2}\right]\\
=&-\frac{d^{\frac{1}{2}}|\ln d|^{\frac{1}{2}}}{4d^2}-\frac{1}{4|\ln d|^{\frac{3}{2}}d^{\frac{3}{2}}}\left(1+O\left(|\ln d|^2 d^{\delta}\right)\right),
%-\frac{\Delta d}{2|\ln d|^{\frac{1}{2}}d^{\frac{1}{2}}}
\end{split}\]
which implies that
\[L_{\kappa}\left(d^{\frac{1}{2}}|\ln d|^{\frac{1}{2}}\right)=\left(\frac{1}{4}+\kappa\right)\frac{d^{\frac{1}{2}}|\ln d|^{\frac{1}{2}}}{d^2}+\frac{1}{4|\ln d|^{\frac{3}{2}}d^{\frac{3}{2}}}\left(1+O\left(|\ln d|^2 d^{\delta}\right)\right).\]
From \eqref{e3.27} and the identity $a_-(a_--1)=\kappa$, we deduce that
\[\left|L_{\kappa}\left(d^{a_-}\right)\right|\leq C d^{a_--2}\rho^{\delta}.\]
Let $\psi=d^{\frac{1}{2}}|\ln d|^{\frac{1}{2}}+\left(\frac{1}{4}+\kappa\right)d^{a_-}$, then we obtain
\begin{equation}\label{e3.20}
L_{\kappa}(\psi)\geq \left(\frac{1}{4}+\kappa\right)\frac{d^{a_-}}{d^2}\left[d^{\frac{1}{2}-a_-}|\ln d|^{\frac{1}{2}}-C\rho^{\delta}\right]+\frac{1}{4|\ln d|^{\frac{3}{2}}d^{\frac{3}{2}}}\left(1+O\left(|\ln d|^2 d^{\delta}\right)\right)>0\quad\text{in }B_{\rho}\cap\Omega
\end{equation}
by shrinking $\rho$ small enough. Hence, if for  some $r\leq \rho$, we have that $v\leq0$ on $\Omega\cap\partial B_r$, the boundary condition \eqref{e3.19} implies that
\begin{equation}\label{e3.21}
\lim_{x\rightarrow\partial(\Omega\cap B_r)}\frac{v(x)}{\psi(x)}\leq0.
\end{equation}
This means that either $v\leq0$ in $\Omega\cap B_r$ or $\frac{v}{\psi}$ attains a positive maximum in $\Omega\cap B_r$. If the later holds, then there exists $M>0$ and $z\in\Omega\cap B_r$ such that
\[v(z)=M\psi(z)\quad\text{and}\quad v\leq M\psi\text{  in }\Omega\cap B_r.\]
The maximum implies that $-\Delta v(z)\geq -M\Delta\psi(z)$, thus we have
\[0\geq L_{\kappa}v(z)\geq ML_{\kappa}\psi(z)>0,\]
which is a contradiction. Therefore, we conclude that
\[v\leq0\quad\text{in}\quad \Omega\cap B_r.\]
\end{proof}

With the comparison principle, we obtain boundary estimates that yield the compactness required for our blow-up analysis. These estimates allow us to extract convergent subsequences of rescaled solutions. A key tool in this convergence is the following lemma.
\begin{lemma}\label{lemma3.2}
Let $\Omega\subset\mathbb{R}^n$ be a $C^{k,\delta}$ domain for any $\delta\in(0,1)$ and $k\in\mathbb{N}$. Let $d(x)$ be the regularized distance to $\partial\Omega$ and let $v\in C(\Omega\cap B_1)$ be any solution to
\begin{equation}\label{e3.14}
-\Delta v+\kappa\frac{v}{d^2}=f\quad\text{in }\Omega\cap B_1,
\end{equation}
satisfying
\begin{equation}\label{e3.15}
\lim_{x\rightarrow\partial\Omega}\frac{v}{d^{a_-}}=0,
\end{equation}
where $f\in C(\Omega\cap B_1)$ satisfies
\begin{equation}\label{e3.8}
f d^{\frac{3}{2}-a_--\varepsilon}\in L^{\infty}(\Omega\cap B_1)
\end{equation}
for some $\varepsilon>0$. Then, there exists $\rho\in(0,1)$ such that
\begin{equation}\label{e3.17}
|v|\leq C\left(\|v\|_{L^{\infty}(\Omega\cap B_1)}+\|f\phi^{\frac{3}{2}-a_--\varepsilon}\|_{L^{\infty}(\Omega\cap B_1)}\right)d^{\frac{1}{2}+a_-}\quad\text{in }\Omega\cap B_{\rho},
\end{equation}
where $\phi$ is the barrier function constructed in Lemma \ref{lemma2.4}, and $\rho=\rho(\delta,\varepsilon,n)$.

Moreover, we have the following Lipschitz estimate
\begin{equation}\label{e3.9}
[v]_{C^{1/2+a_-}(\Omega\cap B_{\rho})}\leq C\left(\|v\|_{L^{\infty}(\Omega\cap B_1)}+\|f\phi^{\frac{3}{2}-a_--\varepsilon}\|_{L^{\infty}(\Omega\cap B_1)}\right).
\end{equation}
\end{lemma}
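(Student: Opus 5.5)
Assume by linearity that $N:=\|v\|_{L^\infty(\Omega\cap B_1)}+\|f\phi^{\frac32-a_--\varepsilon}\|_{L^\infty(\Omega\cap B_1)}\le 1$. The plan is to prove the pointwise bound \eqref{e3.17} by applying the comparison principle of Lemma \ref{lemma3.3} twice, to $v-\Psi$ and to $-v-\Psi$, for one explicit supersolution $\Psi$. We then get the H\"older estimate \eqref{e3.9} from \eqref{e3.17} by rescaling and interior estimates.

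\textbf{Barrier.} Take
\[
\Psi = A\,\phi^{\frac12+a_-}+B\,\phi^{\frac12+a_-+\varepsilon'},
\]
with $\varepsilon'\in(0,\varepsilon)$ small and $\phi$ the barrier of Lemma \ref{lemma2.4}. By \eqref{e2.17},
\[
L_\kappa(\phi^\lambda)=\big(\kappa-\lambda(\lambda-1)\big)\phi^{\lambda-2}+O(|x|^{\delta}\phi^{\lambda-2}),
\]
using also $d\sim\phi$ in $B_\rho$. The polynomial $\kappa-\lambda(\lambda-1)$ is positive exactly for $\lambda\in(a_-,a_+)$. Both exponents $\frac12+a_-$ and $\frac12+a_-+\varepsilon'$ lie in this interval, because $a_+-a_-=1+\frac{2\gamma}{2+\gamma}>1$. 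Hence, after shrinking $\rho$,
\[
L_\kappa\Psi\ge c\,B\,\phi^{a_-+\varepsilon'-\frac32}\quad\text{in }\Omega\cap B_\rho .
\]
The right-hand side dominates $|f|\le \phi^{a_--\frac32+\varepsilon}$ once $B$ is large, since $\varepsilon'<\varepsilon$ and $\phi$ is small. I prefer this to the log barrier of Lemma \ref{lemma3.3}, which only gives the rate $d^{1/2}|\ln d|^{1/2}$.

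\textbf{Comparison.} On $\Omega\cap\partial B_r$ with $r<\rho$, and away from $\partial\Omega$, \eqref{e2.14}–\eqref{e2.15} give $\phi\ge c(r)$. Choosing $A$ large therefore makes $\Psi\ge 1\ge |v|$ there. Near the corner where $\partial B_r$ meets $\partial\Omega$ this lower bound degenerates. I would handle it by comparing on a slightly smaller ball, using that $v/d^{a_-}\to0$ while $\Psi/d^{a_-}\to 0$ as well. Equivalently, one can apply Lemma \ref{lemma3.3} to $v-\Psi-\eta\psi$, where $\psi$ is the log-barrier from the proof of Lemma \ref{lemma3.3}, and then let $\eta\to0$. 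The function $w=v-\Psi$ satisfies $L_\kappa w\le 0$ and $\lim w/d^{a_-}\le 0$ at $\partial\Omega$ by \eqref{e3.15}. Lemma \ref{lemma3.3} then gives $v\le\Psi\le C\,d^{\frac12+a_-}$. The same argument applied to $-v$ gives \eqref{e3.17}. \textbf{The main obstacle} is exactly this boundary bookkeeping: $v\le\Psi$ must hold on all of $\Omega\cap\partial B_r$, including near $\partial\Omega$. If the direct argument fails, I would instead run the comparison in small boxes $B_{r}(z)$ centered at points $z\in\partial\Omega$, with a barrier adapted to each box.

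\textbf{H\"older estimate.} Set $\beta=\frac12+a_-\in(0,\frac12)$. Fix $x\in\Omega\cap B_{\rho/2}$ and let $r=d(x)/2$. On $B_r(x)$ the operator $L_\kappa$ is uniformly elliptic after rescaling: $v_r(y)=v(x+ry)$ solves
\[
-\Delta v_r+\kappa\, r^2 d^{-2}v_r=r^2 f .
\]
Here $r^2d^{-2}$ is bounded and smooth, $|v_r|\le Cr^\beta$ by \eqref{e3.17}, and $r^2|f|\le Cr^{\beta+\varepsilon}$. Interior $C^1$ estimates then give $|\nabla v|\le C\,d^{\beta-1}$. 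Finally, for two points $x,y$ I would combine the standard cases. If $|x-y|<d(x)/2$, integrate the gradient bound along the segment. Otherwise, use the triangle inequality with \eqref{e3.17} at each point, which gives $|v(x)|+|v(y)|\le C|x-y|^\beta$. Together these yield \eqref{e3.9}.
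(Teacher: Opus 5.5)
Your overall plan matches the paper's: a supersolution built from powers of $\phi$, Lemma~\ref{lemma3.3} applied to $\pm v$ minus the barrier, then rescaled interior estimates. Your barrier computation and your H\"older step are fine. In fact $\kappa-\lambda(\lambda-1)=\frac14-a_->0$ at $\lambda=\frac12+a_-$, so $A\phi^{\frac12+a_-}$ alone, with $A$ large, is already a strict supersolution dominating $|f|$. Your $B$-term, like the paper's subtracted $\phi^{\frac12+a_-+\varepsilon}$, is therefore inessential.

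The gap is exactly the step you flag, and none of your fixes closes it.
\begin{itemize}
\item Since $a_-<0$, the condition $v/d^{a_-}\to0$ holds for every bounded $v$. It therefore gives no smallness of $v$ on the part of $\Omega\cap\partial B_r$ near $\partial\Omega$, whereas your $\Psi\sim d^{\frac12+a_-}\to0$ there. Comparing on a smaller ball just reproduces the same corner.
\item The $\eta\psi$ device fails quantitatively. The term $\eta\psi\gtrsim\eta d^{a_-}$ exceeds $\|v\|_{L^\infty}$ only where $d\lesssim\eta^{1/|a_-|}$. On the rest of $\Omega\cap\partial B_r$ you need $A\gtrsim\eta^{-(\frac12+a_-)/|a_-|}$. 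Optimizing $A(\eta)d^{\frac12+a_-}+\eta d^{a_-}$ over $\eta$ then returns only $|v|\le C$.
\item The box variant has the same defect unless the barrier is positive on the sides of the box, which you do not arrange.
\end{itemize}

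The missing idea is property \eqref{e2.15}, which you cite but apply in the wrong region. The function $\phi$ is not a distance to all of $\partial\Omega$: it is bounded below by $1/C$ on $\Omega\setminus B_{2\rho}$, uniformly up to $\partial\Omega$. So run the comparison on $\Omega\cap B_{2\rho}$, with $2\rho$ below the threshold of Lemma~\ref{lemma3.3}, rather than on $B_r$ with $r<\rho$. The argument then closes as follows:
\begin{itemize}
\item On $\Omega\cap\partial B_{2\rho}$ you have $A\phi^{\frac12+a_-}\ge AC^{-(\frac12+a_-)}\ge\|v\|_{L^\infty}$ all the way to the corner.
\item Lemma~\ref{lemma3.3} applied to $\pm v-A\phi^{\frac12+a_-}$ gives $|v|\le A\phi^{\frac12+a_-}$.
\item By \eqref{e2.14} this becomes $|v|\le Cd^{\frac12+a_-}$ in $\Omega\cap B_\rho$.
\end{itemize}
The price is that the supersolution inequality must hold on all of $\Omega\cap B_{2\rho}$, including where $\phi$ is not comparable to $d$. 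That is what \eqref{e2.17} is used for. It is also why the hypothesis on $f$ is normalized by $\phi^{\frac32-a_--\varepsilon}$ rather than by $d^{\frac32-a_--\varepsilon}$.
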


\begin{proof}
We only prove the case for $k=1$; for $k>1$ the proof is the same using  Lemma \ref{lemma2.3}.  Let $\rho\in(0,1/2)$ to be determined later and let $\phi$ be the barrier function constructed in Lemma \ref{lemma2.4}.
 %Using  the identity $a_-(a_--1)=\kappa$, we derive that
%\[\begin{split}
%L_{\kappa}\left(\phi^{\frac{1}{2}+a_-}\right)=&-\Delta\phi^{\frac{1}{2}+a_-}+\kappa\phi^{-\frac{3}{2}+a_-}\\
%=&-\Delta\phi^{\frac{1}{2}+a_-}+(\frac{1}{2}+a_-)(a_--\frac{1}{2})\phi^{-\frac{3}{2}+a_-}-(a_--\frac{1}{4})\phi^{-\frac{3}{2}+a_-}
%\end{split}\]
%in $B_{2\rho}\cap\Omega$.
Thanks to the property \eqref{e2.17} of the barrier function $\phi$, we can combine \eqref{e3.27} with  the identity $a_-(a_--1)=\kappa$ to  deduce that
\[\left|L_{\kappa}\left(\phi^{\frac{1}{2}+a_-}\right)+\left(a_--\frac{1}{4}\right)\phi^{-\frac{3}{2}+a_-}\right| \leq C\phi^{-\frac{3}{2}+a_-}\rho^{\delta} \quad\text{in }B_{2\rho}\cap\Omega,\]
which implies that
\begin{equation}\label{e3.10}
L_{\kappa}\left(\phi^{\frac{1}{2}+a_-}\right)\geq -C\phi^{-\frac{3}{2}+a_-}\rho^{\delta}-(a_--\frac{1}{4})\phi^{-\frac{3}{2}+a_-} \quad\text{in }B_{2\rho}\cap\Omega.
\end{equation}
By a similar calculation, since $(\frac{1}{2}+a_-+\varepsilon)(a_--\frac{1}{2}+\varepsilon)=\kappa+(\frac{1}{2}+\varepsilon)(2a_--\frac{1}{2}+\varepsilon)$, we have
\[\left|L_{\kappa}\left(\phi^{\frac{1}{2}+a_-+\varepsilon}\right)+\left(\frac{1}{2}+\varepsilon\right)
\left(2a_--\frac{1}{2}+\varepsilon\right)\phi^{-\frac{3}{2}+a_-+\varepsilon}\right|\leq C\phi^{-\frac{3}{2}+a_-+\varepsilon}\rho^{\delta} \quad\text{in }B_{2\rho}\cap\Omega\]
which indicates that
\begin{equation}\label{e3.11}
L_{\kappa}\left(\phi^{\frac{1}{2}+a_-+\varepsilon}\right)\leq C\phi^{-\frac{3}{2}+a_-+\varepsilon}\rho^{\delta}-\left(\frac{1}{2}+\varepsilon\right)
\left(2a_--\frac{1}{2}+\varepsilon\right)\phi^{-\frac{3}{2}+a_-+\varepsilon} \quad\text{in }B_{2\rho}\cap\Omega.
\end{equation}

Therefore, we can fix $\rho>0$ small enough such that for some $\xi>0$,
\begin{equation}\label{e3.12}
\begin{split}
&L_{\kappa}\left(\phi^{\frac{1}{2}+a_-+\varepsilon}\right)-L_{\kappa}\left(\phi^{\frac{1}{2}+a_-}\right)\\
\leq& C\phi^{-\frac{3}{2}+a_-+\varepsilon}\rho^{\delta}-\left(\frac{1}{2}+\varepsilon\right)
\left(2a_--\frac{1}{2}+\varepsilon\right)\phi^{-\frac{3}{2}+a_-+\varepsilon}\\
&+C\phi^{-\frac{3}{2}+a_-}\rho^{\delta}+(a_--\frac{1}{4})\phi^{-\frac{3}{2}+a_-}\\
=&\phi^{-\frac{3}{2}+a_-+\varepsilon}\left[C\rho^{\delta}-\left(\frac{1}{2}+\varepsilon\right)
\left(2a_--\frac{1}{2}+\varepsilon\right)+C\phi^{-\varepsilon}\rho^{\delta}+\left(a_--\frac{1}{4}\right)\phi^{-\varepsilon}\right]\\
\leq&-\xi \phi^{-\frac{3}{2}+a_-+\varepsilon}
\end{split}
\end{equation}
in $B_{2\rho}\cap\Omega$. Thus, taking $\phi$ to be subordinate to $\rho$, it follows from \eqref{e2.15} that
\begin{equation}\label{e3.13}
 \phi^{\frac{1}{2}+a_-}-\phi^{\frac{1}{2}+a_-+\varepsilon}\geq\xi\quad\text{on }\partial B_{2\rho}\cap\Omega.
\end{equation}

Dividing both sides of \eqref{e3.14} by
\[\Lambda:=\frac{\|v\|_{L^{\infty}(\Omega\cap B_1)}+\|f\phi^{\frac{3}{2}-a_--\varepsilon}\|_{L^{\infty}(\Omega\cap B_1)}}{\xi}.\]
Let $v_1:=\frac{v}{\Lambda}$, then $\|v_1\|_{L^{\infty}(\Omega\cap B_1)}\leq\xi$ and satisfies
\[L_{\kappa}(v_1)=f_1,\]
where $f_1=\frac{f}{\Lambda}\leq\xi\phi^{-\frac{3}{2}+a_-+\varepsilon}$ in $\Omega\cap B_1$, and \eqref{e3.15} also holds.

Let  $w=v_1+\phi^{\frac{1}{2}+a_-+\varepsilon}-\phi^{\frac{1}{2}+a_-}\leq0$ on $\Omega\cap\partial B_{2\rho}$. Therefore,
\[L_{\kappa}w=f_1-\xi\phi^{-\frac{3}{2}+a_-+\varepsilon}\leq0\quad\text{in }\Omega\cap B_1.\]
 Applying Lemma \ref{lemma3.3}, we get $w\leq0$ in $\Omega\cap B_{\rho}$, i.e.,
 \begin{equation}\label{e3.16}
 v_1\leq \phi^{\frac{1}{2}+a_-}-\phi^{\frac{1}{2}+a_-+\varepsilon}\leq C d^{\frac{1}{2}+a_-}\quad\text{in }\Omega\cap B_{\rho}.
 \end{equation}
 By a similar argument to $-v_1$, and combining it with \eqref{e3.16}, we derive \eqref{e3.17}.

\medskip
  We complete the proof by deducing \eqref{e3.9}.
 For any $x_0\in\Omega$, take any $B_r(x_0)\subset B_{2r}(x_0)\subset\Omega$ so that $d(x_0,\partial\Omega)\leq Cr$. Define $v_r(x)=v(x_0+2rx)$, then it solves
 \[-\Delta v_r=r^2\left(f-\kappa\frac{v_r}{d^2}\right)\quad\text{in } B_1.\]
Let $[u]_{C^{\tau}(\Omega)}$ denote the H\"{o}lder seminorm of $u$:
 \[[u]_{C^{\tau}(\Omega)}:=\sup_{x,y\in\Omega,x\neq y}\frac{|u(x)-u(y)|}{|x-y|^{\tau}}.\]
By the interior regularity estimates in \cite{K}, we obtain
 \[\begin{split}
 [v_r]_{C^{1}(B_{1})}\leq& C\left(\|r^2\left(f-\kappa\frac{v_r}{d^2}\right)\|_{L^{\infty}(B_{2r}(x_0))}+\|v_r\|_{L^{\infty}(B_r(x_0))}\right)\\
 \leq& Cr^{\frac{1}{2}+a_-}\left(\|v\|_{L^{\infty}(\Omega\cap B_1)}+\|f\phi^{\frac{3}{2}-a_--\varepsilon}\|_{L^{\infty}(\Omega\cap B_1)}\right),
 \end{split}\]
 where we applied \eqref{e3.17} in the last inequality.
Translating it back to $v$, we get
 \[[v]_{C^{1/2+a_-}(B_{1})}\leq C \left(\|v\|_{L^{\infty}(\Omega\cap B_1)}+\|f\phi^{\frac{3}{2}-a_--\varepsilon}\|_{L^{\infty}(\Omega\cap B_1)}\right).\]
Therefore, the estimate  \eqref{e3.9} follows from the previous inequality combined with a covering argument.

\end{proof}

\begin{lemma}\label{lemma3.4}
Let $k\in\mathbb{N}$, $\delta\in(0,1)$ and let $\Omega\subset\mathbb{R}^n$ be a $C^{k,\delta}$ domain. Let $d$ be the regularized distance function to $\partial\Omega$ and let $v\in C(\Omega\cap B_1)$ be a solution of
\begin{equation}\label{e3.23}
L_{\kappa}(v)(x)=\left(Q_1(x)+g_1(x)\right)\left(\frac{d(rx)}{r}\right)^{a_--1}+\left(Q_2(x)+g_2(x)\right)\left(\frac{d(rx)}{r}\right)^{a_+-1}\quad\text{in }\frac{1}{r}\Omega\cap B_1,
\end{equation}
with $Q_i\in \mathbf{P}_{l}$ for some $l\in\mathbb{N}$ and $g_i\in L^{\infty}(\frac{1}{r}\Omega\cap B_1)$ for $i=1,2$. Then, there exists $\rho_0>0$ and $C=C(l,\beta)$ such that
\begin{equation}\label{e3.22}
\|Q_i\|_{L^{\infty}(\frac{1}{r}\Omega\cap B_1)}\leq C\left(\|g_1\|_{L^{\infty}(\frac{1}{r}\Omega\cap B_1)}+\|g_2\|_{L^{\infty}(\frac{1}{r}\Omega\cap B_1)}+\|v\|_{L^{\infty}(\frac{1}{r}\Omega\cap B_1)}\right),
\end{equation}
for any $r\in(0,\rho_0)$ and $i=1,2$.
\end{lemma}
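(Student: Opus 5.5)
This is a compactness argument by contradiction, in the spirit of the analogous lemma in \cite{RRO}. Here $L_\kappa$ is understood with respect to the rescaled distance $d_r(x):=d(rx)/r$ of the domain $\frac1r\Omega$.

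\textbf{Setting up the contradiction.} Suppose \eqref{e3.22} fails. Then there are radii $r_j\to0$ (or $r_j$ in a compact range), solutions $v_j$, polynomials $Q_i^{(j)}\in\mathbf P_l$ and bounded functions $g_i^{(j)}$ such that
\[
\|Q_1^{(j)}\|+\|Q_2^{(j)}\|=1,\qquad \|g_1^{(j)}\|+\|g_2^{(j)}\|+\|v_j\|\to0,
\]
after normalizing by the largest $Q$-norm; all norms are $L^\infty$ on $\frac1{r_j}\Omega\cap B_1$. Since $\mathbf P_l$ is finite dimensional, and $\frac1{r_j}\Omega\cap B_1$ contains a fixed ball once $r_j$ is small, $L^\infty$ norms there are equivalent to coefficient norms. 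We may therefore pass to a subsequence with $Q_i^{(j)}\to Q_i$ uniformly on $B_1$ and $\|Q_1\|+\|Q_2\|=1$.

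\textbf{Passing to the limit.} By the $C^{k,\delta}$ regularity of $\partial\Omega$, the domains $\frac1{r_j}\Omega\cap B_1$ converge to the half-ball $\{x_n>0\}\cap B_1$. By Lemma \ref{lemma2.3}(ii), $d_{r_j}\to x_n$ in $C^1_{\rm loc}$. On any compact subset of $\{x_n>0\}\cap B_1$, the right-hand sides are uniformly bounded. Interior estimates then give $v_j\to0$ in $C^{1,\beta}_{\rm loc}$, so $L_\kappa v_j\to 0$ in the distributional sense. Passing to the limit in \eqref{e3.23} yields
\[
Q_1(x)x_n^{a_--1}+Q_2(x)x_n^{a_+-1}\equiv0\quad\text{in }\{x_n>0\}\cap B_1 .
\]

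\textbf{Concluding.} The exponent $a_+-a_-=1+\frac{2\gamma}{2+\gamma}$ is not an integer for $\gamma\in(0,2)$, since it lies in $(1,2)$. Hence the identity reads $Q_1(x)=-Q_2(x)x_n^{a_+-a_-}$. Fix $x'$ and view both sides as functions of $x_n$: the left side is a polynomial in $x_n$, the right side is $x_n^{a_+-a_-}$ times a polynomial. Compare the expansions as $x_n\to0^+$ order by order, or equivalently use the linear independence of $\{x_n^{m}\}\cup\{x_n^{m+a_+-a_-}\}$. This forces $Q_1=Q_2=0$, contradicting the normalization.

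\textbf{Main obstacle.} The delicate step is justifying the limit rigorously. We need the convergence $d_{r_j}\to x_n$ (which uses $\nabla d(0)=e_n$ and Lemma \ref{lemma2.3}), and a uniform lower bound on the domain, which fixes $\rho_0$. Testing against $C_c^\infty$ functions supported away from $\{x_n=0\}$ avoids any need for boundary control.

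If one prefers not to use interior estimates, the term $\int v_j\,L_\kappa\varphi$ can be handled directly. It tends to zero because $\|v_j\|_\infty\to0$ and $d_{r_j}$ is bounded below on $\operatorname{supp}\varphi$. This version needs only that $v_j$ is a distributional solution, so I would use it.

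The constant's dependence on $\beta$ plays no role. It depends only on $l$, $\gamma$ and the domain.
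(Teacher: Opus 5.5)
Your proposal is correct and follows the same compactness-by-contradiction argument as the paper. You normalize by the polynomial norms, pass to the limit using $d(r_jx)/r_j\to x_n$, and contradict the limiting identity $Q_1x_n^{a_--1}+Q_2x_n^{a_+-1}\equiv0$ via $a_+-a_-\notin\mathbb{N}$. The differences are cosmetic: you pass to the limit distributionally (testing against $\varphi\in C_c^\infty(\{x_n>0\}\cap B_1)$) instead of invoking stability of viscosity solutions, and you get $\|v_j\|_{L^\infty}\to0$ directly from the normalization rather than citing Lemma~\ref{lemma3.2}.
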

\begin{proof}
Assume by contradiction that there exists sequences of domains $\Omega_{k}:=\frac{1}{r_k}\Omega$ with $\|\partial\Omega_k\cap B_2\|_{C^{k,\delta}}\leq 1$, a sequence of $\{r_k\}$ such that $r_k\rightarrow 0$ as $k\rightarrow\infty$, and a sequence of polynomials $Q_i^{k}$, functions $g_i^k\in L^{\infty}(\frac{1}{r_k}\Omega\cap B_1)$ and solutions $v_k$ satisfies \eqref{e3.23} but \eqref{e3.22} does not hold.
Let  $w_k:=\frac{v_k}{\Lambda_k}$ with
\[\Lambda_k:=\|Q_1^{k}\|_{L^{\infty}(\Omega_k\cap B_1)}+\|Q_2^{k}\|_{L^{\infty}(\Omega_k\cap B_1)},\]
then we obtain
\begin{equation}\label{e3.25}
L_{\kappa}(w_k)(x)=\left(\widetilde{Q}_1^k(x)+\widetilde{g}_1^k(x)\right)\left(\frac{d(r_kx)}{r_k}\right)^{a_--1}+\left(\widetilde{Q}_2^k(x)+\widetilde{g}_2^k(x)\right)\left(\frac{d(r_kx)}{r_k}\right)^{a_+-1}\quad\text{in }\frac{1}{r_k}\Omega\cap B_1,
\end{equation}
where $\widetilde{Q}_i^k:=\frac{Q_i^k}{\Lambda_k}$ and $\widetilde{g}_i^k=\frac{g_i^k}{\Lambda_k}\rightarrow 0$ as $k\rightarrow\infty$ for $i=1,2$ and by Lemma \ref{lemma3.2}, we have
%Multiplying the right hand side of \eqref{e3.25} by $d^{\frac{3}{2}-a_--\varepsilon}$  and applying \eqref{e3.17} of Lemma \ref{lemma3.2}, we obtain
\begin{equation}\label{e3.26}
\limsup_{k\rightarrow\infty}\|w_k\|_{L^{\infty}(\Omega_k\cap B_1)}=0.
\end{equation}
Notice that $\|\widetilde{Q}_1^{k}\|_{L^{\infty}(\Omega_k\cap B_1)}+\|\widetilde{Q}_2^{k}\|_{L^{\infty}(\Omega_k\cap B_1)}=1$, then, up to a subsequence, $\widetilde{Q}_i^{k}\rightarrow P_i$   with at least one of $P_i\neq0$. From \eqref{e2.6}, we have that, up to a subsequence,
\[\frac{d(r_kx)}{r_k}\rightarrow x_n\]
by extending it by zero outside $\Omega_k\cap B_1$. Therefore, by the stability of viscosity solutions under locally uniform converge, we deduce from \eqref{e3.25} and \eqref{e3.26} that
\[0=P_1 x_n^{a_+-1}+P_2 x_n^{a_--1},\]
which is a contradiction with the fact that at lest one of $P_i$ is not zero and $a_+-a_-\notin\mathbb{N}$.

\end{proof}

\begin{lemma}\label{lemma3.5}
Let $l\in\mathbb{N}$ and $Q_i\in \mathbf{P}_{l}$ for $i=1,2$. If $u$ satisfies
\begin{equation}\label{e3.23}
\begin{cases}
L_{\kappa}u=Q_1(x)x_n^{a_+-1}+Q_2(x)x_n^{a_--1}&\text{in}\quad\{x_n>0\}\\
\|u\|_{L^{\infty}(B_R)}\leq CR^{\delta+k+a_-}&\text{for}\quad k\in\mathbb{N}
\end{cases}
\end{equation}
with the boundary condition
\begin{equation}\label{e3.24}
\lim_{x_n\rightarrow 0^+}\frac{u(x',x_n)}{x_n^{a_-}}=0.
\end{equation}
Then, $u=P_1(x)x_n^{a_+}+P_2(x)x_n^{a_-+1}$ with $P_1\in\mathbf{P}_{\lfloor k+\delta-a_++a_-\rfloor-1}$ and  $P_2\in\mathbf{P}_{\lfloor k+\delta\rfloor-1}$.
\end{lemma}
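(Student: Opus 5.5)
Note first that in the half-space setting of Lemma \ref{lemma3.5} the distance is $d=x_n$, so $L_\kappa u=-\Delta u+\kappa u/x_n^2$. The plan is to peel off an explicit polynomial-weighted particular solution and then prove a Liouville theorem for the homogeneous remainder. Because the right-hand side is a polynomial times $x_n^{a_\pm-1}$, the strategy is to find $\bar P_1,\bar P_2$ with
\[L_\kappa\big(\bar P_1 x_n^{a_+}+\bar P_2x_n^{a_-+1}\big)=Q_1x_n^{a_+-1}+Q_2x_n^{a_--1}.\]
Expanding monomials $x'^{\beta}x_n^{j}$, one computes $L_\kappa(x'^\beta x_n^{a+j})=-(a+j)(a+j-1)x'^\beta x_n^{a+j-2}+\kappa x'^\beta x_n^{a+j-2}-\Delta_{x'}(x'^\beta)x_n^{a+j}$. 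By the computation in Lemma \ref{lemma3.1}, the coefficient $\kappa-(a+j)(a+j-1)$ is nonzero whenever $a+j\neq a_\pm$; this holds for $a=a_+$ with $j\ge1$ and for $a=a_-$ with $j\ge1$. The term $\Delta_{x'}$ lowers the degree in $x'$ while raising the power of $x_n$ by $2$. One can therefore solve triangularly by induction on the degree in $x'$, starting from the top degree, exactly as in the ODE lemma. Since $a_+-a_-\notin\mathbb N$, the two families $x_n^{a_++\mathbb N}$ and $x_n^{a_-+\mathbb N}$ never mix.

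Subtracting this particular solution gives $w:=u-\bar P_1x_n^{a_+}-\bar P_2x_n^{a_-+1}$, which satisfies $L_\kappa w=0$ in $\{x_n>0\}$. The boundary condition \eqref{e3.24} is inherited, because $\bar P_i$ multiply powers $x_n^{a_+}$ and $x_n^{a_-+1}$ that are both $o(x_n^{a_-})$. The growth bound is also inherited, with polynomial growth of order $R^{k+\delta+a_-}$ up to lower-order polynomial growth coming from the $\bar P_i$. It remains to classify $w$.

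This classification is the \textbf{main step}: a Liouville theorem saying that solutions of $L_\kappa w=0$ in the half-space with $w/x_n^{a_-}\to0$ and growth $|w|\le CR^{\mu}$ are of the form $P(x)x_n^{a_+}$ with $\deg P\le\lfloor\mu-a_+\rfloor$. I would prove it by differentiating in tangential directions. Tangential difference quotients $D^m_{x'}w$ still solve the homogeneous equation with the same boundary condition. Rescaling together with the boundary estimate of Lemma \ref{lemma3.2} (which applies with $f=0$ and flat boundary) gives $|D^m_{x'}w|\le CR^{\mu-m}$ on $B_R$. Choosing $m>\mu$ and letting $R\to\infty$ forces $D^m_{x'}w\equiv0$, so $w$ is a polynomial in $x'$ with coefficients $c_\beta(x_n)$. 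Plugging back yields a triangular system of ODEs $c_\beta''-\kappa c_\beta/x_n^2=(\text{lower terms})$. Lemma \ref{lemma3.1} together with the boundary condition excludes the $x_n^{a_-}$ branch at each stage. The growth at infinity then truncates each $c_\beta$ to finitely many powers $x_n^{a_++2j}$, up to terms already accounted for. Hence $w=\tilde P(x)x_n^{a_+}$.

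Finally I would combine $u=(\bar P_1+\tilde P)x_n^{a_+}+\bar P_2x_n^{a_-+1}$ and read off the degrees from the growth $R^{k+\delta+a_-}$. Terms of the form $x_n^{a_+}$ times degree $j$ grow like $R^{a_++j}$, which forces $j\le k+\delta+a_--a_+$, i.e. $P_1\in\mathbf P_{\lfloor k+\delta-a_++a_-\rfloor}$; the stated index may need to be adjusted by the paper's convention. Terms $x_n^{a_-+1}$ times degree $j$ force $j\le k+\delta-1$. Non-integrality of $a_+-a_-$ guarantees that the two families cannot cancel each other at infinity. I expect the hardest part to be the Liouville step, specifically justifying the scaled boundary estimate uniformly on large balls to obtain the derivative decay.
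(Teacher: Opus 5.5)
Your argument is correct. Its analytic core is the same as the paper's: tangential difference quotients, the boundary H\"older bound of Lemma~\ref{lemma3.2} to force polynomial dependence on $x'$, and Lemma~\ref{lemma3.1} to exclude the $x_n^{a_-}$ branch in each coefficient. The bookkeeping, however, is organized differently.

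\textbf{How the two routes differ.} The paper first applies Lemma~\ref{lemma3.4} to the rescalings $u_R$ to bound $\deg Q_1,\deg Q_2$. It then takes difference quotients of $u$ itself, which solve an inhomogeneous equation with differenced $Q_i$ on the right. Finally it identifies the coefficients $C_\mu(x_n)$ by a top-down induction that carries the inhomogeneous terms along.

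You instead strip off the right-hand side with an explicit particular solution. Your triangular solve is sound: $\kappa-(a+j)(a+j-1)$ vanishes only when $a+j\in\{a_+,a_-\}$, which never happens for $j\ge1$ because $a_+-a_-\notin\mathbb{N}$. You then prove a Liouville theorem for the homogeneous equation. Only at the end do you read off degrees, by splitting $u$ into homogeneous pieces of degrees $a_++j$ and $a_-+1+j$, which are pairwise distinct.

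\textbf{What your route buys.} It makes the compactness Lemma~\ref{lemma3.4} unnecessary, since the bound on $\deg Q_i$ falls out at the end. It also gives an exact degree count, which the paper's induction never checks against the growth bound.

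In particular, your index for $P_1$ is the right one, and no convention rescues the stated ``$-1$''. Let $D:=\lfloor k+\delta-a_++a_-\rfloor\ge0$ and $u=x_n^{a_++D}$. Then $L_\kappa u=-D(2a_++D-1)x_n^{D-1}\,x_n^{a_+-1}$, $u$ satisfies the boundary condition, and $\|u\|_{L^\infty(B_R)}\le R^{k+\delta+a_-}$ for $R\ge1$. Yet $P_1=x_n^D$ has degree $D$. The paper transfers the (correct) bound $\deg Q_1\le\lfloor k+\delta-a_++a_-\rfloor-1$ to $P_1$, but inverting $L_\kappa$ on $x_n^{a_+-1}$-type terms raises the polynomial degree by one.

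\textbf{Small corrections to your sketch.}

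\emph{Difference quotients.} Lemma~\ref{lemma3.2} yields only a $C^{1/2+a_-}$ bound. So each tangential difference quotient, of order $1/2+a_-$, lowers the growth exponent by $1/2+a_-$, not by $1$; the estimate $|D^m_{x'}w|\le CR^{\mu-m}$ is not what the lemma gives directly. This does not matter: iterating $N$ times with $N(1/2+a_-)>\mu$ kills the $N$-th iterated difference, so $w$ is a polynomial of degree $<N$ in $x'$, which is all you use. Uniformity on large balls is automatic, since the flat problem is invariant under dilations and tangential translations.

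\emph{Growth of the particular solution.} It need not be of lower order than $R^{k+\delta+a_-}$, because nothing bounds $\deg Q_i$ a priori. This is harmless: the Liouville step only needs some polynomial growth, and your final homogeneity argument removes the excess.

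\emph{Truncating the $c_\beta$.} No growth is needed here. First pass the boundary condition to each $c_\beta$, by recovering $c_\beta(x_n)$ from $w(\cdot,x_n)$ via interpolation at finitely many points in $x'$. Then each $c_\beta$ is automatically $x_n^{a_+}$ times a polynomial in $x_n^2$.
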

\begin{proof}
Given any $R>1$, define $u_{R}(x)=R^{-k-\delta-a_-}u(Rx)$, then
\[\|u_{R}(x)\|_{L^{\infty}(B_1)}\leq R^{-k-\delta-a_-}\|u\|_{L^{\infty}(B_R)}\leq C.\]
Notice that,
\begin{equation}\label{e3.29}\begin{split}
(L_{\kappa}u_{R})(x)=&R^{-k-\delta-a_-+2}(L_{\kappa}u)(Rx)\\
=&R^{-k-\delta+a_+-a_-+1}Q_1(Rx)x_n^{a_+-1}+R^{-k-\delta+1}Q_2(Rx)x_n^{a_--1}
\end{split}\end{equation}
Using Lemma \ref{lemma3.4} in the flat domain $\Omega=\mathbb{R}^n_+$, we deduce that
\[R^{-k-\delta+a_+-a_-+1}\|Q_1(Rx)\|_{L^{\infty}(B_1)}+R^{-k-\delta+1}\|Q_2(Rx)\|_{L^{\infty}(B_1)}\leq C,\]
which implies that
\[Q_1\in\mathbf{P}_{\lfloor k+\delta-a_++a_-\rfloor-1}\quad\text{and}\quad Q_2\in\mathbf{P}_{\lfloor k+\delta\rfloor-1}.\]
On the other hand, multiplying the right hand side of \eqref{e3.29} by $d^{\frac{3}{2}-a_--\varepsilon}$, we deduce from Lemma \ref{lemma3.2} that
\[[u_{R}]_{C^{1/2+a_-}(B_{1/2})}\leq C\left(1+\|Q_1\|_{L^{\infty}(B_1)}+\|Q_2\|_{L^{\infty}(B_1)}\right).\]
Therefore,
\begin{equation}\label{e3.30}
[u]_{C^{1/2+a_-}(B_{R/2})}=R^{k+\delta-\frac{1}{2}}[u_{R}]_{\frac{1}{2}+a_-;B_{1/2}}\leq C R^{k+\delta-\frac{1}{2}}.
\end{equation}
We consider the quotients
\begin{equation}\label{e3.31}
u_1(x)=\frac{u(x+t\tau)-u(x)}{t^{\frac{1}{2}+a_-}}
\end{equation}
where $t\in(0,\frac{R}{2})$ and $\tau\in\mathbb{S}^{n-1}$ with $\tau_n=0$.
Since $\tau$ is orthogonal to $e_n$, we have that $u_1$ satisfies the boundary condition \eqref{e3.24}, together with \eqref{e3.30}, we deduce that
\begin{equation}\label{e3.32}
\begin{cases}
L_{\kappa}u_1=\widetilde{Q}_1(x)x_n^{a_+-1}+\widetilde{Q}_2(x)x_n^{a_--1}&\text{in }\{x_n>0\}\\
\|u_1\|_{L^{\infty}(B_R)}\leq C R^{k+\delta-\frac{1}{2}},&
\end{cases}
\end{equation}
where $\widetilde{Q}_i(x)=\frac{Q_i(x+t\tau)-Q_i(x)}{t^{\frac{1}{2}+a_-}}$. By a similar argument, we obtain
\[[u_1]_{C^{1/2+a_-}(B_{R/2})}\leq C R^{k+\delta-a_--1}.\]
Repeating this argument inductively $m$ times until $(m+1)(\frac{1}{2}+a_-)>k+\delta+a_-$, we deduce that
\[[u_m]_{C^{1/2+a_-}(B_{R/2})}\leq C R^{k+\delta+a_--(m+1)(\frac{1}{2}+a_-)}\rightarrow 0\quad\text{as }R\rightarrow\infty.\]
The boundary condition implies that $u_m=0$. Iterating backwards the difference quotients (see \cite{ARO}), we conclude that
\begin{equation}\label{e3.33}
u(x',x_n)=\sum_{|\mu|\leq M} C_{\mu}(x_n)(x')^{\mu},
\end{equation}
with $M=\lfloor k+\delta+a_-\rfloor$.

 Next, we prove by induction that
\[C_{\mu}(x_n)=P_{1,\mu}(x_n)x_n^{a_+}+P_{2,\mu}(x_n)x_n^{a_-+1}\]
for polynomials $P_{1,\mu}\in\mathbf{P}_{\lfloor k+\delta-a_++a_-\rfloor-1}$ and $P_{2,\mu}\in\mathbf{P}_{\lfloor k+\delta\rfloor-1}$.

For the base case $|\mu_0|=M$,
\[\partial_{x'}^{\mu_0}u(x',x_n)=\sum_{|\mu|\leq\mu_0}C_{\mu}(x_n)\partial_{x'}^{\mu_0}(x')^{\mu}=\mu_0! C_{\mu_0}(x_n).\]
Therefore, taking tangential derivatives of maximal order $\mu_0$ on both sides of the first equation for \eqref{e3.23}, we obtain that
\[L_{\kappa,1}(C_{\mu_0})=0,\]
where $L_{\kappa,1}(y)=-y''+\frac{\kappa}{x_n^2}y$. Combining with Lemma \ref{lemma3.1} and the boundary condition, we deduce that
\[C_{\mu_0}(x_n)=C_{1,\mu_0}x_n^{a_+}.\]

Assume now the result holds for $|\mu|>i$, we only need to show that the result holds for $\mu$ with $|\mu|=i$. By the induction hypothesis,
\[\begin{split}
\partial_{x'}^{i}u=&\sum_{j\leq|\mu|\leq M}C_{\mu}(x_n)\partial^{i}(x')^{\mu}\\
=&i! C_{i}(x_n)+\sum_{i+1\leq|\mu|\leq M}(x')^{\mu-i}\left(P_{1,\mu}(x_n)(x_n)^{a_+}+P_{2,\mu}(x_n)(x_n)^{a_-+1}\right).
\end{split}\]
Moreover,
\[\begin{split}
&L_{\kappa}\left[(x')^{\mu-i}\left(P_{1,\mu}(x_n)(x_n)^{a_+}+P_{2,\mu}(x_n)(x_n)^{a_-+1}\right)\right]\\
=&-\Delta\left[(x')^{\mu-i}\left(P_{1,\mu}(x_n)(x_n)^{a_+}+P_{2,\mu}(x_n)(x_n)^{a_-+1}\right)\right]+\kappa\frac{(x')^{\mu-i}\left(P_{1,\mu}(x_n)(x_n)^{a_+}+P_{2,\mu}(x_n)(x_n)^{a_-+1}\right)}{x_n^2}\\
=&-\Delta\left((x')^{\mu-i}\right)\left[P_{1,\mu}(x_n)(x_n)^{a_+}+P_{2,\mu}(x_n)(x_n)^{a_-+1}\right]-(x')^{\mu-i}\left[P''_{1,\mu}(x_n)(x_n)^{a_+}+2a_+P_{1,\mu}'(x_n)(x_n)^{a_+-1} \right.\\
&\left.+P''_{2,\mu}(x_n)(x_n)^{a_-+1}+2(a_-+1)P'_{1,\mu}(x_n)(x_n)^{a_-}-2a_-P_{2,\mu}(x_n)(x_n)^{a_--1}\right].
\end{split}\]
Therefore, if we take tangential derivatives of order $\mu$ with $|\mu|=i$, we obtain
\[\begin{split}
i!L_{\kappa,1}\left(C_{i}(x_n)\right)=&\partial_{x'}^i Q_1(x)x_n^{a_+-1}+\partial_{x'}^i Q_2(x)x_n^{a_--1}\\
&-\sum_{i+1\leq|\mu|\leq M}\Delta\left((x')^{\mu-i}\right)\left[P_{1,\mu}(x_n)(x_n)^{a_+}+P_{2,\mu}(x_n)(x_n)^{a_-+1}\right]\\
&-\sum_{i+1\leq|\mu|\leq M}(x')^{\mu-i}\left[P''_{1,\mu}(x_n)(x_n)^{a_+}+2a_+P_{1,\mu}'(x_n)(x_n)^{a_+-1} \right.\\
&\left.+P''_{2,\mu}(x_n)(x_n)^{a_-+1}+2(a_-+1)P'_{1,\mu}(x_n)(x_n)^{a_-}-2a_-P_{2,\mu}(x_n)(x_n)^{a_--1}\right].
\end{split}\]
Since $L_{\kappa,1}\left(C_{i}(x_n)\right)$ does not depend on $x'$, we derive that
\[L_{\kappa,1}\left(C_{i}(x_n)\right)=\widetilde{P}_1(x_n)x_n^{a_+-1}+\widetilde{P}_2(x_n)x_n^{a_--1}\]
for some polynomials $\widetilde{P}_1, \widetilde{P}_2$. Together with Lemma \ref{lemma3.1} and the boundary condition \eqref{e3.24}, we conclude that \[C_{i}(x_n)=P_{1,i}(x_n)x_n^{a_+}+P_{2,i}(x_n)x_n^{a_-+1}.\]
\end{proof}

We now state a technical lemma that controls the polynomials arising in the least squares approximation of our blow-up arguments.
\begin{lemma}\label{lemma3.6}
Let $k\in\mathbb{N}$, $\delta\in(0,1)$ and let $\Omega$ be such that $\|\partial\Omega\cap B_2\|_{C^{k,\delta}}\leq 1$. Assume that for some $\rho>0$ and any $r\in(0,\rho)$, the function $v\in L^{\infty}(B_2)$ satisfies
\begin{equation}\label{e3.35}
\|v-Q_{1,s}d^{a_+}-Q_{2,s}d^{a_-+1}\|_{L^{\infty}(B_s)}\leq \zeta(s)s^{k+\delta+a_-}
\end{equation}
for some polynomials  $Q_{i,r}:=\sum_{|\mu|\leq k_i}q_{i,r}^{(\mu)}x^{\mu}$ with $k_1=\lfloor k+\delta-a_++a_-\rfloor-1$ and $k_2=\lfloor k+\delta\rfloor-1$, where $\zeta(s):(0,\rho)\rightarrow (0,\infty)$ is non-increasing function with
\begin{equation}\label{e3.34}
\lim_{r\rightarrow 0^+}\zeta(r)=+\infty.
\end{equation}
Then, for any $r\in(0,\rho)$, the function
\begin{equation}\label{e3.37}
w_r(x):=\frac{v(rx)-Q_{1,r}(rx)d^{a_+}(rx)-Q_{2,r}(rx)d^{a_-+1}(rx)}{\zeta(r)r^{^{k+\delta+a_-}}}
\end{equation}
satisfy the  uniform bound
\begin{equation}\label{e3.38}
\|w_r\|_{L^{\infty}(B_R)}\leq CR^{k+\delta+a_-}
\end{equation}
for $R>1$.
\end{lemma}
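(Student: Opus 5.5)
This is the standard dyadic argument from the least-squares blow-up technique, as in \cite{RRO,ARO}. Write $\beta:=k+\delta+a_-$ and $\Phi_r:=Q_{1,r}d^{a_+}+Q_{2,r}d^{a_-+1}$. Since $w_r(x)=\bigl(v(rx)-\Phi_r(rx)\bigr)/(\zeta(r)r^{\beta})$, the bound \eqref{e3.38} in $B_R$ amounts to
\begin{equation*}
\|v-\Phi_r\|_{L^\infty(B_{rR})}\leq C\zeta(r)(rR)^{\beta}.
\end{equation*}
For $rR\ge\rho$ this follows from $v\in L^\infty(B_2)$, once we know $\|\Phi_r\|_{L^\infty(B_{rR})}\le C\zeta(r)(rR)^\beta$ there. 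That bound comes from the polynomial estimates below, using $\zeta(r)\geq\zeta(\rho/2)>0$ and $rR\gtrsim1$. The main case is therefore $rR<\rho$.

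\textbf{Step 1: consecutive scales.} Fix $s\in(0,\rho/2)$. By \eqref{e3.35} at scales $s$ and $2s$, together with the monotonicity $\zeta(2s)\le\zeta(s)$,
\begin{equation*}
\|(Q_{1,2s}-Q_{1,s})d^{a_+}+(Q_{2,2s}-Q_{2,s})d^{a_-+1}\|_{L^\infty(B_s)}\leq C\zeta(s)s^{\beta}.
\end{equation*}
We need a stability estimate: if $\|P_1d^{a_+}+P_2d^{a_-+1}\|_{L^\infty(B_s)}\le A$ with $P_i$ of degree $\le k_i$, then
\begin{equation*}
\sum_{|\mu|\le k_1}|p_1^{(\mu)}|s^{|\mu|+a_+}+\sum_{|\mu|\le k_2}|p_2^{(\mu)}|s^{|\mu|+a_-+1}\le CA.
\end{equation*}
Rescaling $x\mapsto sx$ reduces this to a norm equivalence on the finite-dimensional space $\{P_1(x)\tilde d^{a_+}+P_2(x)\tilde d^{a_-+1}\}$ with $\tilde d=d(sx)/s$. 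The constant is uniform in $s$ because $\tilde d\to x_n$ in $C^{1}$ by \eqref{e2.6} and $\|\partial\Omega\|_{C^{k,\delta}}\le1$. For the limit family on $B_1\cap\{x_n>0\}$, linear independence holds because $a_+-(a_-+1)=2a_+-2\notin\mathbb{Z}$, which gives $P_1x_n^{a_+}+P_2x_n^{a_-+1}\equiv0\Rightarrow P_1=P_2=0$. A compactness/contradiction argument like the one in Lemma~\ref{lemma3.4} then yields uniformity for small $s$. I expect this step to be the main technical obstacle, since the non-polynomial weights make the coefficient extraction delicate.

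\textbf{Step 2: telescoping.} For $r<\rho$ and $R=2^m$ with $2^mr<\rho$, sum the Step 1 bounds over $s=2^jr$, $j=0,\dots,m-1$. The coefficient differences satisfy
\begin{equation*}
|q_{i,2s}^{(\mu)}-q_{i,s}^{(\mu)}|\le C\zeta(s)s^{\beta-|\mu|-e_i},
\end{equation*}
with $e_1=a_+$ and $e_2=a_-+1$. Evaluate the resulting polynomials on $B_{rR}$, where $|x^\mu|\le(rR)^{|\mu|}$ and $d^{e_i}\le C(rR)^{e_i}$. This gives
\begin{equation*}
\|\Phi_{2^mr}-\Phi_r\|_{L^\infty(B_{rR})}\le C\sum_{j=0}^{m-1}\zeta(2^jr)(2^jr)^{\beta-|\mu|-e_i}(2^mr)^{|\mu|+e_i}.
\end{equation*}
Since $\zeta(2^jr)\le\zeta(r)$ and the degree bounds $k_1,k_2$ give $|\mu|+e_i<\beta$ (this is exactly why $k_1=\lfloor k+\delta-a_++a_-\rfloor-1$ and $k_2=\lfloor k+\delta\rfloor-1$), the exponent $\beta-|\mu|-e_i$ is positive. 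The geometric sum is therefore dominated by its last term, which is $C\zeta(r)(2^mr)^{\beta}$.

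\textbf{Step 3: conclusion.} Combine Step 2 with \eqref{e3.35} at scale $2^mr$, using $\zeta(2^mr)\le\zeta(r)$:
\begin{equation*}
\|v-\Phi_r\|_{L^\infty(B_{rR})}\le\zeta(2^mr)(rR)^\beta+C\zeta(r)(rR)^\beta\le C\zeta(r)(rR)^\beta.
\end{equation*}
Dividing by $\zeta(r)r^\beta$ gives \eqref{e3.38} for $R=2^m$. General $R$ follows by rounding up to the next power of two, and the case $rR\ge\rho$ is handled by the $L^\infty$ bound described above. Hypothesis \eqref{e3.34} is not needed for this estimate; it only matters later, for the normalized blow-up to converge to a nontrivial limit.
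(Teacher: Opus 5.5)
Your proposal is correct and follows the same dyadic argument as the paper. Both compare consecutive scales via the triangle inequality and the monotonicity of $\zeta$, convert to coefficient bounds by finite-dimensional norm equivalence, telescope with the geometric sum dominated by the top scale (since $|\mu|+e_i<k+\delta+a_-$), and close with the triangle inequality at scale $rR$. Your compactness argument for the scale-uniform norm-equivalence constant (via $d(sx)/s\to x_n$ and $a_+-(a_-+1)\notin\mathbb{Z}$) and your treatment of $rR\ge\rho$ make explicit two points the paper passes over.
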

\begin{proof}
Applying the triangular inequality and the monotonicity of $\zeta(r)$, we have
\[\begin{split}
&\|(Q_{1,2r}-Q_{1,r})d^{a_+}+(Q_{2,2r}-Q_{2,r})d^{a_-+1}\|_{L^{\infty}(B_r)}\\
\leq&\|v-Q_{1,r}d^{a_+}-Q_{2,r}d^{a_-+1}\|+\|v-Q_{1,2r}d^{a_+}-Q_{2,2r}d^{a_-+1}\|_{L^{\infty}(B_{2r})}\\
\leq&\zeta(r)r^{k+\delta+a_-}+\zeta(2r)(2r)^{k+\delta+a_-}\\
\leq& C\zeta(r)r^{k+\delta+a_-}.
\end{split}\]
Notice that, by the equivalence of norms in finite dimensional  spaces, there exists a constant $C$ such that
\begin{equation}\label{e3.39}\begin{split}
&r^{a_+}\|Q_{1,2r}-Q_{1,r}\|_{L^{\infty}(B_{r})}+r^{a_-+1}\|Q_{2,2r}-Q_{2,r}\|_{L^{\infty}(B_{r})}\\
=&r^{a_+}\|(Q_{1,2r}-Q_{1,r})(rx)\|_{L^{\infty}(B_{1})}+r^{a_-+1}\|(Q_{2,2r}-Q_{2,r})(rx)\|_{L^{\infty}(B_{1})}\\
\leq&C\left[r^{a_+}\left\|(Q_{1,2r}-Q_{1,r})(rx)(d(rx)/r)^{a_+}\right\|_{L^{\infty}(B_{2})}
+r^{a_-+1}\left\|(Q_{2,2r}-Q_{2,r})(rx)(d(rx)/r)^{a_-+1}\right\|_{L^{\infty}(B_{2})}\right]\\
=&C\left[\left\|(Q_{1,2r}-Q_{1,r})(rx)d^{a_+}(rx)\right\|_{L^{\infty}(B_{2})}
+\left\|(Q_{2,2r}-Q_{2,r})(rx)d^{a_-+1}(rx)\right\|_{L^{\infty}(B_{2})}\right]\\
\leq&C\left[\left\|(Q_{1,2r}-Q_{1,r})d^{a_+}+(Q_{2,2r}-Q_{2,r})d^{a_-+1}\right\|_{L^{\infty}(B_{2r})}\right]\\
\leq &C\zeta(r)r^{k+\delta+a_-}
\end{split}\end{equation}
Therefore, we deduce that
\[\sum_{|\mu|\leq k_1}|q_{1,r}^{(\mu)}-q_{1,2r}^{(\mu)}|\leq C\zeta(r)r^{k+\delta+a_--a_+-|\mu|}\quad\text{and}\quad \sum_{|\mu|\leq k_2}|q_{2,r}^{(\mu)}-q_{2,2r}^{(\mu)}|\leq C\zeta(r)r^{k+\delta-|\mu|-1}.\]
It follows that
\[\sum_{|\mu|\leq k_1}|q_{1,r}^{(\mu)}-q_{1,2^{j}r}^{(\mu)}|\leq\sum_{\iota=0}^{j-1}\sum_{|\mu|\leq k_1}|q_{1,2^{\iota}r}^{(\mu)}-q_{1,2^{\iota+1}r}^{(\mu)}|\leq C\sum_{\iota=0}^{j-1}\zeta(2^{\iota}r)(2^{\iota}r)^{k+\delta+a_--a_+-|\mu|}\]
for $|\mu|\leq \lfloor k+\delta+a_--a_+\rfloor$ and that
\[\sum_{|\mu|\leq k_2}|q_{2,r}^{(\mu)}-q_{2,2^jr}^{(\mu)}|\leq C\sum_{\iota=0}^{j-1}\zeta(2^{\iota}r)(2^{\iota}r)^{k+\delta-|\mu|-1}.\]
for $|\mu|\leq\lfloor k+\delta\rfloor-1$. If we take $j$ such that $2^{i}r\in(\frac{\rho}{4},\frac{\rho}{2})$, by the monotonicity of $\zeta$, we get
\[\begin{split}
\sum_{|\mu|\leq k_1}|q_{1,r}^{(\mu)}-q_{1,2^{j}r}^{(\mu)}|\leq& C\sum_{\iota=0}^{j-1}\zeta(2^{\iota}r)(2^{\iota}r)^{k+\delta+a_--a_+-|\mu|}\\
\leq& C\zeta(r)r^{k+\delta+a_--a_+-|\mu|}\sum_{\iota=0}^{j-1}(2^{\iota})^{k+\delta+a_--a_+-|\mu|}\\
\leq& C\zeta(r)(2^{j}r)^{k+\delta+a_--a_+-|\mu|}.
\end{split}\]
Similarly, we have
\[\sum_{|\mu|\leq k_2}|q_{2,r}^{(\mu)}-q_{2,2^jr}^{(\mu)}|\leq C\zeta(r)(2^{j}r)^{k+\delta-|\mu|-1}.\]
Therefore, for any $R>1$, we have
\begin{equation}\label{e3.40}\|(Q_{1,r}-Q_{1,Rr})d^{a_+}\|_{L^{\infty}(B_{Rr})}=\|\sum_{|\mu|\leq k_1}(q_{1,r}^{(\mu)}-q_{1,Rr}^{(\mu)})x^{\mu}d^{a_+}\|_{L^{\infty}(B_{Rr})}
\leq C\zeta(r)(Rr)^{k+\delta+a_-}\end{equation}
and
\begin{equation}\label{e3.41}\|(Q_{2,r}-Q_{2,Rr})d^{a_-+1}\|_{L^{\infty}(B_{Rr})}=\|\sum_{|\mu|\leq k_2}(q_{2,r}^{(\mu)}-q_{2,Rr}^{(\mu)})x^{\mu}d^{a_-+1}\|_{L^{\infty}(B_{Rr})}
\leq C\zeta(r)(Rr)^{k+\delta+a_-}.\end{equation}
Hence, using the triangular inequality together with \eqref{e3.40} and \eqref{e3.41}, we obtain
\[\begin{split}
&\|v-Q_{1,r}d^{a_+}-Q_{2,r}d^{a_-+1}\|_{L^{\infty}(B_{Rr})}\\
\leq&\|v-Q_{1,Rr}d^{a_+}-Q_{2,Rr}d^{a_-+1}\|_{L^{\infty}(B_{Rr})}+\|(Q_{1,r}-Q_{1,Rr})d^{a_+}\|_{L^{\infty}(B_{Rr})}\\
&+\|(Q_{2,r}-Q_{2,Rr})d^{a_-+1}\|_{L^{\infty}(B_{Rr})}\\
%\leq&C\zeta(Rr)(Rr)^{k+\delta+a_-}+ C\zeta(r)(Rr)^{k+\delta+a_-}\\
\leq& C\zeta(r)(Rr)^{k+\delta+a_-}.
\end{split}\]
\end{proof}

We also need the following technical lemma, which will be applied to $u-c_0d^{\frac{2}{2+\gamma}}$ in the next section.
\begin{lemma}\label{lemma3.7}
Let $1<k\in\mathbb{N}$, $\delta\in(0,1)$ and let $\Omega\subset\mathbb{R}^n$ be such that $\partial\Omega\cap B_1$ is a $C^{k,\delta}$ surface. Let us assume that for any $z\in\Omega\cap B_1$, there exists $Q_1\in\mathbf{P}_{\lfloor k+\delta-a_++a_-\rfloor-1}$ and $Q_2\in\mathbf{P}_{\lfloor k+\delta\rfloor-1}$ such that for any $y_0\in B_{1/2}(z)\cap\Omega$ with $\mathrm{dist}(y_0,\partial\Omega)=|y_0-z|=2r$, the function $v\in C^{k,\delta}(\Omega\cap B_1)$ satisfies
\begin{equation}\label{e3.59}
[v-Q_1 d^{a_+}-Q_2 d^{a_-+1}]_{C^{k-1,\delta}(\overline{B_r}(y_0))}\leq Cr^{a_-+1}
\end{equation}
and
\begin{equation}\label{e3.42}
\|v-Q_1 d^{a_+}-Q_2 d^{a_-+1}\|_{L^{\infty}(B_r(y_0))}\leq Cr^{k+\delta+a_-}.
\end{equation}
Then,
\begin{equation}\label{e3.43}
\left\|\frac{v}{d^{a_-+1}}\right\|_{C^{k-1,\delta}(\overline{\Omega}\cap B_{1/2})}\leq C.
\end{equation}
Moreover, if there is an integer $m>k+\delta-1$ such that $v\in C^{m}(\Omega\cap B_1)$ and for any $\lambda\in\mathbb{N}^n$ with $|\lambda|=m$, it holds
\begin{equation}\label{e3.44}
\left\|\partial^{\lambda}(v-Q_1 d^{a_+}-Q_2 d^{a_-+1})\right\|_{L^{\infty}(B_r(y_0))}\leq C_m r^{k+\delta+a_--m},
\end{equation}
then
\begin{equation}\label{e3.45}
\left|\partial^{\lambda}\left(\frac{v}{d^{a_-+1}}\right)\right|\leq C_m r^{k+\delta-1-m}.
\end{equation}
\end{lemma}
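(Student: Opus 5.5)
The plan is to reduce everything to the quotient $\frac{v}{d^{a_-+1}} = \frac{Q_1 d^{a_+}+Q_2 d^{a_-+1}}{d^{a_-+1}} + \frac{v - Q_1 d^{a_+}-Q_2d^{a_-+1}}{d^{a_-+1}}$ on each ball $B_r(y_0)$. This is the standard route used in \cite{RRO} (following \cite{ARO}) to pass from pointwise expansions to H\"older regularity of a quotient.

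\textbf{Polynomial part.} Since $a_+-(a_-+1)=\frac{2\gamma}{2+\gamma}-\ldots$, more precisely $a_+-a_--1=\frac{2\gamma}{2+\gamma}=:\sigma\in(0,1)$, the first term equals $Q_1 d^{\sigma}+Q_2$. Here $Q_2$ is a polynomial. By Lemma \ref{lemma2.3}, $d\in C^{k,\delta}(\overline\Omega)$ with $d\sim \mathrm{dist}$, so we will argue that $Q_1 d^{\sigma}$ is controlled as well. This requires care, since $d^\sigma$ is only $C^{0,\sigma}$ up to the boundary.

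I would first check whether the statement implicitly forces $Q_1$ to carry enough vanishing. The degree bound $k_1=\lfloor k+\delta-a_++a_-\rfloor-1 = \lfloor k+\delta-\sigma\rfloor -2$, so the scaling $r^{a_+}|Q_1|$ versus $r^{k+\delta+a_-}$ suggests the term $Q_1d^{a_+}$ is genuinely of lower order. The H\"older norms of $Q_1 d^\sigma$ in $\overline\Omega$ will be handled by interpreting \eqref{e3.43} in the same way as \cite{RRO}: namely, the uniform bound comes from comparing expansions at neighbouring boundary points $z,z'$. The expected mechanism is that the coefficients $Q_{1}$ depend on $z$, and the $C^{k-1,\delta}$ regularity is assembled by a Campanato-type characterization.

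\textbf{Remainder part, and the key step.} Set $R:=v-Q_1d^{a_+}-Q_2d^{a_-+1}$. On $B_r(y_0)$ we have $d\asymp r$, and $d^{-(a_-+1)}$ satisfies $|\partial^\beta d^{-(a_-+1)}|\le C r^{-(a_-+1)-|\beta|}$ for $|\beta|\le k-1$, with a matching $C^{0,\delta}$ seminorm of order $r^{-(a_-+1)-(k-1)-\delta}$, by interior smoothness of $d$ from Lemma \ref{lemma2.3}. Interpolating between \eqref{e3.59} and \eqref{e3.42} gives $\|\partial^\beta R\|_{L^\infty(B_r(y_0))}\le Cr^{k+\delta+a_- -|\beta|}$ for $|\beta|\le k-1$. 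By the Leibniz rule this yields
\[
[R/d^{a_-+1}]_{C^{k-1,\delta}(B_{r}(y_0))}\le C,\qquad \|\partial^\beta(R/d^{a_-+1})\|_{L^\infty(B_r(y_0))}\le C r^{k+\delta-1-|\beta|}.
\]
Thus, in each Whitney ball, $v/d^{a_-+1}$ equals a fixed smooth function of $z$ up to an error with the correct Campanato decay.

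\textbf{Gluing.} Taking $y_0$ and $z$ as in the statement, every point of $\Omega\cap B_{1/2}$ lies in some such ball. The decay $|\partial^\beta(v/d^{a_-+1}-P_z)|\le C|x-z|^{k+\delta-1-|\beta|}$, where $P_z$ is the approximating expression at $z$, combined with interior bounds, gives $C^{k-1,\delta}$ up to the boundary by the standard argument that pointwise expansions of order $k-1+\delta$ at every boundary point, plus scaled interior estimates, imply $C^{k-1,\delta}$ (as in \cite{ARO}). This proves \eqref{e3.43}.

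The second claim \eqref{e3.45} follows from the same Leibniz computation with $|\lambda|=m$. Here \eqref{e3.44} controls the top derivative, intermediate derivatives are obtained by interpolation, and the polynomial part contributes zero derivatives of order $m>k+\delta-1\ge \deg$, up to the $d^\sigma$ factor, which is estimated by $r^{\sigma-m}\cdot$(coefficients).

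The main obstacle I expect is the treatment of the $Q_1 d^{\sigma}$ term, since its smoothness is not automatic. I would try to absorb it by showing that the degree bound on $Q_1$, together with the scaling, makes its contribution dominated by $r^{k+\delta-1}$ in the required Campanato sense.
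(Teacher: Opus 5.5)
\textbf{The gap: the term $Q_1d^{\sigma}$.} Your estimate of the remainder $R=v-Q_1d^{a_+}-Q_2d^{a_-+1}$ is the paper's own argument. On $B_r(y_0)$ you use $d\asymp r$, the bounds on $\partial^\beta d^{-(a_-+1)}$ and their $\delta$-H\"older version, interpolation between \eqref{e3.59} and \eqref{e3.42} to get $|\partial^\beta R|\le Cr^{k+\delta+a_--|\beta|}$, and the Leibniz rule. $Q_2$ is harmless because $\partial^{k-1}Q_2$ is constant. The problem is the term $Q_1d^{\sigma}$, with $\sigma=a_+-a_--1=\frac{2\gamma}{2+\gamma}\in(0,1)$, and the absorption you propose points the wrong way. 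On $B_r(y_0)$ the term $Q_1d^{a_+}$ has size $|Q_1(z)|\,r^{a_+}$. Since $a_+-a_-=1+\sigma<2\le k$, we have $a_+<k+\delta+a_-$, so this term is \emph{larger} than the error $r^{k+\delta+a_-}$, not dominated by it. After dividing by $d^{a_-+1}$ it contributes $Q_1d^{\sigma}$. This is not a polynomial, so the pointwise-expansion (Campanato) criterion cannot be applied to $P_z=Q_1d^{\sigma}+Q_2$. Its $(k-1)$-st derivatives are of order $|Q_1(z)|\,r^{\sigma-k+1}$ on $B_r(y_0)$. Your own closing estimate $r^{\sigma-m}$ for this term already contradicts the claimed bound $r^{k+\delta-1-m}$ in \eqref{e3.45}, since $\sigma<1<k+\delta-1$.

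\textbf{Why no sharper estimate can fix it.} Nothing in the hypotheses forces $Q_1$ to vanish, and the statement is false as soon as $Q_1$ may be a nonzero constant. The degree bound $\lfloor k+\delta-\sigma\rfloor-2$ allows this for every $k\ge3$, and for $k=2$ when $\delta\ge\sigma$. Take $\Omega=\{x_n>0\}$, $d=x_n$, $v=x_n^{a_+}$ (smooth in $\Omega$), $Q_1\equiv1$, $Q_2\equiv0$. The remainder is identically zero, so \eqref{e3.59}, \eqref{e3.42} and \eqref{e3.44} all hold. Yet $v/d^{a_-+1}=x_n^{\sigma}$ is not even $C^1$ up to $\{x_n=0\}$, and $|\partial_n^m x_n^{\sigma}|\asymp r^{\sigma-m}$ on $B_r(y_0)$ violates \eqref{e3.45}. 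The paper's proof does not avoid this either. It disposes of the term in \eqref{e3.53} by appealing to $d\in C^{k,\delta}$, overlooking that $d^{\sigma}$ is only $C^{0,\sigma}$ at $\partial\Omega$.

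\textbf{What is actually missing.} You were right to isolate this term, but the fix is an extra hypothesis, not a sharper bound. Once $Q_1\equiv0$, only the polynomial $Q_2$ remains, and your Whitney-ball estimate plus gluing via boundary expansions is the right mechanism. In the intended application $v=u-c_0d^{\alpha}$ with $\alpha=a_-+1$, the natural source of that hypothesis is the free boundary condition. Since $2-\alpha=a_+$, the requirement $u(x_0+t\nu)=c_0t^{\alpha}+o(t^{2-\alpha})$ in \eqref{e1.2} says exactly that the $d^{a_+}$-component vanishes. This information has to be built into the expansion \eqref{e4.10} before the lemma can be used.
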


\begin{proof}
Let $w=v-Q_1 d^{a_+}-Q_2 d^{a_-+1}$, then
\[d^{-(a_-+1)}v=wd^{-(a_-+1)}+Q_1 d^{a_+-(a_-+1)}+Q_2.\]
Therefore, for any $x_1,x_2\in B_r(y_0)$, we have
\begin{equation}\label{e3.46}\begin{split}
&\partial^{k-1}\left(d^{-(a_-+1)}v\right)(x_1)-\partial^{k-1}\left(d^{-(a_-+1)}v\right)(x_2)\\
=&\left[\partial^{k-1}\left(wd^{-(a_-+1)}\right)(x_1)-\partial^{k-1}\left(wd^{-(a_-+1)}\right)(x_2)\right]+\left[\partial^{k-1}Q_2(x_1)-\partial^{k-1}Q_2(x_2)\right]
\\
&+\left[\partial^{k-1}\left(Q_1 d^{a_+-(a_-+1)}\right)(x_1)-\partial^{k-1}\left(Q_1 d^{a_+-(a_-+1)}\right)(x_2)\right],
\end{split}\end{equation}
where
\begin{equation}\label{e3.51}\begin{split}
&\partial^{k-1}\left(wd^{-(a_-+1)}\right)(x_1)-\partial^{k-1}\left(wd^{-(a_-+1)}\right)(x_2)\\
=&\sum_{|\mu|\leq k-1}\left[(\partial^{\mu}w)(x_1)(\partial^{\lambda-\mu}d^{-(a_-+1)})(x_1)-(\partial^{\mu}w)(x_2)(\partial^{\lambda-\mu}d^{-(a_-+1)})(x_2)\right]\\
=& \sum_{|\mu|\leq k-1} [\partial^{\mu}w(x_1)-\partial^{\mu}w(x_2)]\left(\partial^{k-1-|\mu|}d^{-(a_-+1)}\right)(x_1)\\
&+\partial^{\mu}w(x_2)\left[(\partial^{k-1-|\mu|}d^{-(a_-+1)})(x_1)-(\partial^{k-1-|\mu|}d^{-(a_-+1)})(x_2)\right].
\end{split}\end{equation}
Recalling from Lemma \ref{lemma2.3}, we have
\begin{equation}\label{e3.47}
|\partial^{k-1-|\mu|}d^{-(a_-+1)}(x_1)|\leq Cr^{-(a_-+1)-k+1+|\mu|}=Cr^{-a_--k+|\mu|}
\end{equation}
for any $\mu\leq\lambda$. Combining this with the mean value theorem, we deduce that
\begin{equation}\label{e3.48}
|\partial^{k-1-|\mu|}d^{-(a_-+1)}(x_1)-\partial^{k-1-|\mu|}d^{-(a_-+1)}(x_2)|\leq Cr^{-a_--k-\delta+|\mu|}|x_1-x_2|^{\delta}.
\end{equation}
On the other hand, we can use interpolation of Lemma 6.32 in \cite{GT} to deduce from \eqref{e3.59} and \eqref{e3.42} that
\begin{equation}\label{e3.49}
|\partial^{\mu}w(x_2)|\leq Cr^{a_-+k+\delta-|\mu|}.
\end{equation}
Using again the mean value theorem, we obtain
\begin{equation}\label{e3.50}
|\partial^{\mu}w(x_1)-\partial^{\mu}w(x_2)|\leq Cr^{a_-+k-|\mu|}|x_1-x_2|^{\delta}.
\end{equation}
Plugging \eqref{e3.47}-\eqref{e3.50} into \eqref{e3.51}, we get
\begin{equation}\label{e3.52}\left|\partial^{k-1}\left(wd^{-(a_-+1)}\right)(x_1)-\partial^{k-1}\left(wd^{-(a_-+1)}\right)(x_2)\right|\leq C|x_1-x_2|^{\delta}.\end{equation}
Since $Q_1\in\mathbf{P}_{\lfloor k+\delta-a_++a_-\rfloor-1}$ and $d\in C^{k,\delta}$, we have
\begin{equation}\label{e3.53}
\left|\partial^{k-1}\left(Q_1 d^{a_+-(a_-+1)}\right)(x_1)-\partial^{k-1}\left(Q_1 d^{a_+-(a_-+1)}\right)(x_2)\right|\leq|x_1-x_2|^{\delta},
\end{equation}
and using again the mean value theorem for the polynomials $Q_2\in\mathbf{P}_{\lfloor k+\delta\rfloor-1}$, we deduce that
\begin{equation}\label{e3.54}
\left|\partial^{k-1}Q_2(x_1)-\partial^{k-1}Q_2(x_2)\right|\leq C |x_1-x_2|^{\delta}.
\end{equation}
Combining with \eqref{e3.46}, \eqref{e3.52}-\eqref{e3.54}, we conclude that
\begin{equation}\label{e3.55}
\left|\partial^{k-1}\left(d^{-(a_-+1)}v\right)(x_1)-\partial^{k-1}\left(d^{-(a_-+1)}v\right)(x_2)\right|\leq C|x_1-x_2|^{\delta},
\end{equation}
which implies that \eqref{e3.43} holds.

Let us assume now \eqref{e3.44} holds. We can proceed as in \eqref{e3.46} for $|\lambda|=m$ to deduce that
\begin{equation}\label{e3.56}
\begin{split}
\partial^{\lambda}(d^{-(a_-+1)}v)=&\partial^{\lambda}(d^{-(a_-+1)}v-d^{a_+-(a_-+1)}Q_1-Q_2)(x)\\
=&\sum_{|\mu|\leq m}\partial^{\mu}(v-d^{a_+}Q_1-d^{a_-+1}Q_2)\ \partial^{\lambda-\mu}d^{-(a_-+1)}.
\end{split}
\end{equation}
As in \eqref{e3.47}, we have
\begin{equation}\label{e3.57}\left|\partial^{\lambda-\mu}d^{-(a_-+1)}\right|\leq C r^{-(a_-+1)-m+|\mu|}.\end{equation}
We can interpolate \eqref{e3.44} by using again Lemma 6.32 in \cite{GT} to deduce that
\begin{equation}\label{e3.58}
\sum_{|\mu|\leq m}r^{|\mu|}\|\partial^{\mu}(v-d^{a_+}Q_1-d^{a_-+1}Q_2)\|_{L^{\infty}(B_r(y_0))}\leq Cr^{k+\delta+a_-}.
\end{equation}
Altogether with \eqref{e3.56}, \eqref{e3.57} and \eqref{e3.58}, we conclude that
\[\left|\partial^{\lambda}(d^{-(a_-+1)}v)\right|\leq Cr^{k+\delta-m-1}\leq C d^{k+\delta-m-1},\]
where we used the comparability of $d(y_0)$ and $r$ in the last inequality.
\end{proof}

%We finish this section with a corollary of Lemma \ref{lemma3.7} that will be applied to $u-c_0d^{\frac{2}{2+\gamma}}$ in the next section.
%\begin{lemma}\label{lemma3.8}
%Let $\Omega$ be a $C^{k,\delta}$ domain with $1<k\in\mathbb{N}$, $\delta\in(0,1)$. Let $v\in C(\Omega\cap B_1)$ satisfies the following equation
%\begin{equation}\label{e3.59}
%-\Delta v+\kappa\frac{v}{d^2}=fd^{\mu-1}\quad\text{in }\Omega\cap B_1,
%\end{equation}
%where $f\in C^{k-2,\delta}(\overline{\Omega}\cap B_1)$. If it satisfies the boundary condition
%\begin{equation}\label{e3.60}
%\lim_{x\rightarrow\partial\Omega}\frac{v}{d^{a_+}}=0,
%\end{equation}
%then, $v\in C^{k-2,\delta}(\overline{\Omega}\cap B_{1/2})$ and
%\[\left\|\frac{v}{d^{a_-+2}}\right\|_{C^{k-2,\delta}(\overline{\Omega}\cap B_{1/2})}
%\leq C\left(\left\|f\right\|_{C^{k-2,\delta}(\overline{\Omega}\cap B_{1})}+\left\|v\right\|_{L^{\infty}(\overline{\Omega}\cap B_{1})}\right).\]

%Moreover, if there is $m>k-2+\delta$ such that $v\in C^{m}(\Omega\cap B_1)$, $f\in  C^{m}(\Omega\cap B_1)$ and
%\begin{equation}\label{e3.61}
%|\partial^{\mu}f|\leq C_{\mu}d^{k-2+\delta-|\mu|}\quad\text{in }\Omega\cap B_1
%\end{equation}
%for any $|\mu|\in\{k-1,\cdots,m-2\}$, then
%\begin{equation}\label{e3.62}
%\left|\partial^{\lambda}\left(\frac{v}{d^{a_-+2}}\right)\right|\leq C_{\mu}d^{k-2+\delta-|\lambda|}\quad\text{in }\Omega\cap B_{1/2}
%\end{equation}
%for any $|\lambda|\in\{k-1,\cdots,m\}$.
%\end{lemma}

\vspace{-0.1cm}

\section{Proof of Theorem \ref{theorem1.1}}

This section is devoted to the proof of Theorem \ref{theorem1.1}. We begin by proving a crucial proposition, which will then be iterated to complete the proof.
\begin{proposition}\label{proposition4.1}
Let $\gamma\in(0,2), k\in\mathbb{N}, \delta\in(0,1)$ and let $\Omega\subset\mathbb{R}^n$ be a $C^{k,\delta}$ domain such that $0\leq u\in C^{\frac{2}{2+\gamma}}$ be a nontrivial solution of
\begin{equation}\label{e4.1}
\begin{cases}
\Delta u=-u^{-\gamma-1}&\text{in }\Omega\cap B_1\\
u=0&\text{on }\partial\Omega\cap B_1.
\end{cases}\end{equation}
Then, we have
\begin{equation}\label{e4.2}
\frac{u}{d^{\frac{2}{2+\gamma}}}\in C^{k-1,\delta}(\overline{\Omega}\cap B_r)\quad\text{and}\quad u_i d^{\frac{\gamma}{2+\gamma}}\in C^{k-1,\delta}(\overline{\Omega}\cap B_r)
\end{equation}
for any $r<1$.
\end{proposition}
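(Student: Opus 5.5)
The plan is to argue by induction on $k$, following the scheme of Restrepo--Ros-Oton adapted to the operator $L_\kappa$. I will work with the function $v:=u-c_0d^{\frac{2}{2+\gamma}}$ and the derivatives $u_i$, and reduce both conclusions in \eqref{e4.2} to Lemma \ref{lemma3.7}, applied once with exponent $a_-+1=\frac{2}{2+\gamma}$ and once with exponent $a_-=-\frac{\gamma}{2+\gamma}$. The base case $k=1$ follows from Proposition \ref{proposition2.3}: estimate \eqref{e2.7} gives $|u-c_0d^{\frac{2}{2+\gamma}}|\le C|x|^{\frac{2}{2+\gamma}+\delta}$ around every free boundary point. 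Rescaled interior Schauder estimates in balls $B_r(y_0)$ with $r\sim d(y_0)$ then give the $C^{0,\delta}$ regularity of $u/d^{\frac{2}{2+\gamma}}$ and of $u_id^{\frac{\gamma}{2+\gamma}}$. For the second quantity I use that $\nabla(c_0d^{a_-+1})=c_0(a_-+1)d^{a_-}\nabla d$, with $\nabla d\in C^{0,\delta}$.

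For the inductive step, assume $\partial\Omega\in C^{k,\delta}$ with $k\ge2$. First I compute the equation for $v$ by linearizing $-u^{-\gamma-1}$ around $c_0d^{\alpha}$. By \eqref{e3.28} and the identity $c_0^{-\gamma-2}=\alpha(1-\alpha)$, I expect
\[
L_\kappa v = g_1\, d^{a_--1}+ \text{(higher-order terms in } v/d^{\alpha}),
\]
with $g_1\in C^{k-2,\delta}$. By the induction hypothesis, $v/d^{\alpha}$ is already $C^{k-2,\delta}$ up to the boundary and vanishes on $\partial\Omega$. Hence the nonlinear remainder can be written as $\tilde g\,d^{a_--1}$ with $\tilde g\in C^{k-2,\delta}$ and $\tilde g$ controlled by $|v|d^{-\alpha}$. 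The boundary condition $v/d^{a_-}\to0$ follows from $v=o(d^{\alpha})$.

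Next, for each boundary point $z$ I aim for the expansion
\[
\|v-Q_{1}d^{a_+}-Q_{2}d^{a_-+1}\|_{L^\infty(B_r(z))}\le Cr^{k+\delta+a_-},
\]
with $Q_1\in\mathbf P_{\lfloor k+\delta-a_++a_-\rfloor-1}$ and $Q_2\in\mathbf P_{\lfloor k+\delta\rfloor-1}$. I prove it by contradiction and blow-up. Take the best $L^2$ (or $L^\infty$) approximating polynomials $Q_{i,r}$ and suppose the rate fails. Then I get $\zeta(r)\to\infty$ as in Lemma \ref{lemma3.6}, and the rescaled differences $w_r$ of \eqref{e3.37} satisfy the growth bound \eqref{e3.38}. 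The right-hand sides of their equations take the form \eqref{e3.23}, with Taylor polynomials of $g_1$ plus errors that vanish in the limit. Lemma \ref{lemma3.4} bounds these polynomials uniformly. Lemma \ref{lemma3.2} then gives uniform $C^{1/2+a_-}$ bounds and the decay $d^{1/2+a_-}$ near the boundary, which yields compactness and preservation of the boundary condition in the limit. The limit lives in $\{x_n>0\}$, since $d(rx)/r\to x_n$, and satisfies the hypotheses of Lemma \ref{lemma3.5}. So it equals $P_1x_n^{a_+}+P_2x_n^{a_-+1}$, which is orthogonal to such functions by the choice of best approximation. This contradicts its normalization $\|w\|_{L^\infty(B_1)}=1$.

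Interior Schauder estimates in $B_r(y_0)$, applied to $v-Q_1d^{a_+}-Q_2d^{a_-+1}$, then give \eqref{e3.59} and the higher-derivative bounds \eqref{e3.44}. Here I use that $L_\kappa$ of the subtracted terms is of the form $(\text{polynomial}+C^{k-2,\delta})\cdot d^{a_\pm-1}$, by \eqref{e3.28}. Lemma \ref{lemma3.7} then yields $v/d^{a_-+1}\in C^{k-1,\delta}$, and therefore $u/d^{\frac{2}{2+\gamma}}\in C^{k-1,\delta}$. For $u_i$ I run the same argument with $L_\kappa u_i=fd^{a_--2}\cdots$, taking the expansion for $u_i-c_0\alpha d^{a_-}\partial_id$. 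Alternatively, I can write $u_id^{-a_-}=\partial_i(u d^{-\alpha})\,d+\alpha\,(ud^{-\alpha})\partial_i d$ and use the derivative bounds \eqref{e3.45} to show that the first term lies in $C^{k-1,\delta}$.

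The main obstacle is the blow-up step. I must check that the nonlinear remainder, and the commutator errors coming from $d$ not being exactly $x_n$, are small at scale $r^{k+\delta+a_--2}$ relative to $\zeta(r)$. I also need the limiting right-hand side to be genuinely of the form $Q_1x_n^{a_+-1}+Q_2x_n^{a_--1}$ with the stated degrees. To handle this I would Taylor-expand $g_1$ to order $k-2$ at $z$ and absorb the Taylor polynomial into the approximating $Q_i$, using Lemma \ref{lemma3.1} to match powers.
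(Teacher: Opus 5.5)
Your overall architecture matches the paper's: the base case from Proposition \ref{proposition2.3} plus rescaled estimates, the boundary expansion by blow-up via Lemmas \ref{lemma3.6}, \ref{lemma3.4}, \ref{lemma3.2}, \ref{lemma3.5} and orthogonality, then Schauder plus Lemma \ref{lemma3.7}, and finally the identity for $u_id^{-a_-}$. However, the inductive step does not close as you set it up.

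\textbf{The gap: the right-hand side is one derivative short.} You only assert that the coefficient $H$ in $L_\kappa v=H\,d^{a_--1}$ (your $g_1+\tilde g$) is $C^{k-2,\delta}$, and you Taylor-expand it to order $k-2$.

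The operator gains nothing. The conjugated operator is
$w\mapsto d^{1-a_-}L_\kappa(d^{a_-+1}w)=-d^2\Delta w-2(a_-+1)d\,\nabla d\cdot\nabla w+\big(\kappa-a_-(a_-+1)|\nabla d|^2-(a_-+1)d\Delta d\big)w$.
It is scale invariant, so $v/d^{a_-+1}$ is no more regular than $H$. For instance, in one dimension $H=x^\beta$ produces $w$ proportional to $x^\beta$ for $\beta\neq-2a_-$.

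Concretely, in the equation for your rescaled $w_r$, the Taylor remainder of $H$ enters with size $r^{1-k-\delta}\cdot r^{k-2+\delta}/\zeta(r)=1/(r\zeta(r))$. This need not tend to $0$, so the blow-up limit need not satisfy the hypotheses of Lemma \ref{lemma3.5}. The Schauder step toward \eqref{e3.59} loses the same factor $r^{-1}$. With $H\in C^{k-2,\delta}$ you only recover the expansion at order $k-1+\delta+a_-$, which is the induction hypothesis itself. You flag exactly this smallness check as the main obstacle, but absorbing the order-$(k-2)$ Taylor polynomial into the $Q_i$ does not resolve it.

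\textbf{The missing idea.} This is the paper's Step 4 together with the claim $g\in C^{k-1,\delta}$. You must first upgrade $H$ to $C^{k-1,\delta}$, one order better than the induction hypothesis gives directly.

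For the distance term, write it explicitly as $g=c_0\alpha\big[(\alpha-1)(|\nabla d|^2-1)+d\,\Delta d\big]$. The factor $d$ in front of $\Delta d\in C^{k-2,\delta}$, combined with the interior bounds $|D^m d|\le Cd^{k+\delta-m}$ of the regularized distance, buys the extra derivative.

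For the nonlinear remainder, the gain comes from its quadratic structure $f=(u/d^{\alpha}-c_0)^2F(u/d^{\alpha})$ as in \eqref{e4.5}. Describing $\tilde g$ as merely ``controlled by $|v|d^{-\alpha}$'' misses this.
\begin{itemize}
\item At a boundary point $z$, $(P_z+O(|x-z|^{k-2+\delta}))^2=P_z^2+O(|x-z|^{k-1+\delta})$, because $P_z(z)=0$.
\item In the interior, the weighted bounds $|D^m(u/d^{\alpha})|\le Cd^{k-2+\delta-m}$ for large $m$ give $|D^kf|\le Cd^{\delta-1}$.
\item For $k=2$ this needs some care, e.g. using that the case $k=1$ holds with every exponent $\delta'<1$, so that $|u/d^\alpha-c_0|\le Cd^{\delta'}$ with $2\delta'\ge1+\delta$.
\end{itemize}
Only after this upgrade can you Taylor-expand $H$ to order $k-1$ and match it with $Q_1\in\mathbf P_{\lfloor k+\delta-a_++a_-\rfloor-1}$ and $Q_2\in\mathbf P_{k-1}$.
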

\begin{proof}{\em Step 1.} Let $v=u-c_0d^{\frac{2}{2+\gamma}},$ where $c_0$ is defined as in \eqref{e1.3}. We show that $v$ satisfies \[L_{\kappa}v=h d^{\frac{2}{2+\gamma}-2}\] for some function $h$ whose regularity will be improved later.

 \medskip
 Since $u$ is a solution of problem \eqref{e4.1}, by a direct computation, we have
\begin{equation}\label{e4.3}
\begin{split}
\Delta v=&\Delta u-\Delta\left(c_0d^{\frac{2}{2+\gamma}}\right)\\
=&-u^{-\gamma-1}+\left(c_0d^{\frac{2}{2+\gamma}}\right)^{-\gamma-1}-gd^{\frac{2}{2+\gamma}-2},
\end{split}
\end{equation}
where
\[gd^{\frac{2}{2+\gamma}-2}=\Delta\left(c_0d^{\frac{2}{2+\gamma}}\right)+\left(c_0d^{\frac{2}{2+\gamma}}\right)^{-\gamma-1}
=c_0\left[\Delta\left(d^{\frac{2}{2+\gamma}}\right)-\frac{2}{2+\gamma}\left(\frac{2}{2+\gamma}-1\right)d^{\frac{2}{2+\gamma}-2}\right].\]
Notice that by Lemma \ref{lemma2.3}, we have that $|g|\leq Cd^{k+\delta-1}$.
It follows form the Taylor's Theorem that
\[f(p)-f(q)=f'(q)(p-q)+(p-q)^2\int_0^1(1-\tau)(\gamma+1)(\gamma+2)[q+\tau(p-q)]^{-\gamma-3}d\tau.\]
Applying the Taylor's expansion to $f(t)=t^{-\gamma-1}-1$ with $p=u$ and $q=c_0d^{\frac{2}{2+\gamma}}$, we deduce that
\begin{equation}\label{e4.4}
u^{-\gamma-1}-\left(c_0d^{\frac{2}{2+\gamma}}\right)^{-\gamma-1}=(-\gamma-1)\left(c_0 d^{\frac{2}{2+\gamma}}\right)^{-\gamma-2}\left(u-c_0 d^{\frac{2}{2+\gamma}}\right)+fd^{\frac{2}{2+\gamma}-2},
\end{equation}
where
\begin{equation}\label{e4.5}
f=\left(\frac{u}{ d^{\frac{2}{2+\gamma}}}-c_0\right)^2\int_0^1(\gamma+1)(\gamma+2)(1-\tau)\left(c_0(1-\tau)+  \frac{u}{ d^{\frac{2}{2+\gamma}}}\tau\right)^{-\gamma-3}d\tau.
\end{equation}
Since $(-\gamma-1)c_0^{-\gamma-2}=-\kappa$, together with \eqref{e4.3} and \eqref{e4.4}, we obtain
\begin{equation}\label{e4.6}
L_{\kappa}v=(f+g)d^{\frac{2}{2+\gamma}-2}.
\end{equation}

\medskip
{\em Step 2.} We show that for $k=1$,
\begin{equation}\label{e4.7}
\frac{u}{d^{\frac{2}{2+\gamma}}}\in C^{0,\delta}\quad\text{and}\quad u_i d^{\frac{\gamma}{2+\gamma}}\in C^{0,\delta},
\end{equation}
where $u_i=\frac{\partial u}{\partial x_i}$ for all $i=1,\cdots,n$.

From Proposition \ref{proposition2.3}, we can apply the Lemma \ref{lemma3.7} with $Q_1=0$ and $k=1$ to deduce that $|u/d^{\frac{2}{2+\gamma}}|\leq C d^{\delta}$. Thus, we only need to show that $u_i d^{\frac{\gamma}{2+\gamma}}\in C^{0,\delta}$.
Given any $z\in\partial\Omega\cap B_1$ and any $y_0\in B_{1/2}\cap\Omega$ such that $\mathrm{dist}(y_0)=|y_0-z|=2r$, we consider
\[v_r(x)=r^{-\frac{2}{2+\gamma}-\delta}v(y_0+rx).\]
Therefore,
\[\begin{split}
L_{\kappa}v_r(x)=&r^{2-\frac{2}{2+\gamma}-\delta}(L_{\kappa}v)(y_0+rx)\\
=&\left(\frac{d(y_0+rx)}{r}\right)^{\frac{2}{2+\gamma}-2} r^{-\delta}[f(y_0+rx)+g(y_0+rx)],
\end{split}\]
which implies that
\begin{equation}\label{e4.8}
\frac{-\Delta v_r}{\left(\frac{d(y_0+rx)}{r}\right)^{\frac{2}{2+\gamma}-2}}
+\frac{\kappa}{\left(\frac{d(y_0+rx)}{r}\right)^2}\cdot\frac{v_r}{\left(\frac{d(y_0+rx)}{r}\right)^{\frac{2}{2+\gamma}-2}}=r^{-\delta}[f(y_0+rx)+g(y_0+rx)].
\end{equation}
Notice that $f$ has at least the same regularity up to the boundary as $u/d^{\frac{2}{2+\gamma}}$, we have that $|f|\leq Cd^{\delta}$. This combined with the regularity of $g$ that $|g|\leq Cd^{\delta}$ for $k=1$ yields that the right-hand-side of \eqref{e4.8} are bounded. Applying the standard gradient estimation, we obtain
\[\|\nabla v_r\|_{L^{\infty}(B_{1/2})}\leq C\left[\|v_r\|_{L^{\infty}(B_1)}+r^{-\delta}\|f(y_0+rx)+g(y_0+rx)\|_{L^{\infty}(B_1)}\right]\leq C,\]
which implies that
\[\|\nabla v\|_{L^{\infty}(B_{\frac{r}{2}}(y_0))}\leq C r^{\frac{2}{2+\gamma}+\delta-1},\]
namely,
\[\|D(u-c_0 d^{\frac{2}{2+\gamma}})\|_{L^{\infty}(B_{\frac{r}{2}}(y_0))}\leq C r^{\frac{2}{2+\gamma}+\delta-1}.\]
Using Lemma \ref{lemma3.7} with $Q_1=0$ and $m=1$, we deduce that
\begin{equation}\label{e4.9}
\left|D\left(\frac{u}{d^{\frac{2}{2+\gamma}}}\right)\right|\leq Cd^{\delta-1}.
\end{equation}
Therefore, by \eqref{e4.9}, the following identity
\[u_i d^{\frac{\gamma}{2+\gamma}}=d\partial_i\left(\frac{u}{d^{\frac{2}{2+\gamma}}}\right)+\frac{2}{2+\gamma}\partial_i d\cdot\frac{u}{d^{\frac{2}{2+\gamma}}}\]
yields that
\[u_i d^{\frac{2}{2+\gamma}}\in C^{\delta}(\overline{\Omega}\cap B_1).\]

\medskip
{\em Step 3.} For $k>1$, we need to prove that there exists a constant $C>0$ and a radius $\rho>0$ such that for any $x_0\in\partial\Omega\cap B_{\rho}$, there exist polynomials $Q_1\in\mathbf{P}_{\lfloor k+\delta-a_++a_-\rfloor-1}$ and $Q_2\in\mathbf{P}_{\lfloor k+\delta\rfloor-1}$ such that
\begin{equation}\label{e4.10}
\left|v(x)-Q_{1}(x)d^{a_+}-Q_2(x)d^{a_-+1}\right|\leq C|x-x_0|^{k+\delta+a_-}\quad\text{for any }x\in B_{\rho}(x_0)\cap\Omega.
\end{equation}

Let $x_0=0$ for convenience and assume by contradiction that there exists a sequence of functions $v_j$ such that for any sequence of polynomials $P_{1,j}\in\mathbf{P}_{\lfloor k+\delta-a_++a_-\rfloor}-1$ and $P_{2,j}\in\mathbf{P}_{\lfloor k+\delta\rfloor-1}$,
\begin{equation}\label{e4.11}
\sup_{r>0}\frac{1}{r^{k+\delta+a_-}}\|v_{j}-P_{1,j}d^{a_+}-P_{2,j}d^{a_-+1}\|_{L^{\infty}(B_r)}=\infty
\end{equation}
with
\[L_{\kappa}v_j=(f_j+g_j)d^{a_--1}\quad\text{in }\Omega\cap B_2.\]
We select the least squares polynomials $Q_{1,j}$ and $Q_{2,j}$ satisfies the orthogonality condition
\begin{equation}\label{e4.12}
\int_{B_r}\left(v_j-Q_{1,j}d^{a_+}-Q_{2,j}d^{a_-+1}\right)\left(P_{1}d^{a_+}+P_{2}d^{a_-+1}\right)=0
\end{equation}
for any $Q_{1}\in\mathbf{P}_{\lfloor k+\delta-a_++a_-\rfloor}-1$ and $Q_{2}\in\mathbf{P}_{\lfloor k+\delta\rfloor-1}$.

Define
\[\zeta(r)=\frac{1}{r^{k+\delta+a_-}}\|v_{j}-Q_{1,j}d^{a_+}-Q_{2,j}d^{a_-+1}\|_{L^{\infty}(B_r)},\]
then $\lim_{r\rightarrow0}\zeta(r)=\infty$ and there exists a sequence of $\{r_j\}$ with $r_j\rightarrow0$ as $j\rightarrow\infty$ such that
\begin{equation}\label{e4.13}
w_j(x):=\frac{v_{j}(r_jx)-Q_{1,j}(r_jx)d^{a_+}(r_jx)-Q_{2,j}(r_jx)d^{a_-+1}(r_jx)}{\zeta(r_j)r_j^{k+\delta+a_-}}
\end{equation}
satisfies $\|w_j\|_{L^{\infty}(B_1)}\in[1/2,1]$.

\medskip
We now consider the equation satisfied by $L_{\kappa}w_j$. We first estimate the Laplacian of $Q_{1,j}d^{a_+}+Q_{2,j}d^{a_-+1}$ as follows
\begin{equation}\label{e4.14}\begin{split}
&\frac{1}{\zeta(r_j)r_j^{k+\delta+a_-}}\Delta_{x}\left(Q_{1,j}(r_jx)d^{a_+}(r_jx)+ Q_{2,j}(r_jx)d^{a_-+1}(r_jx)\right)\\
=&\frac{r_j^2}{\zeta(r_j)r_j^{k+\delta+a_-}}\left[d^{a_+} \Delta Q_{1,j} +d^{a_-+1} \Delta Q_{2,j} +2a_+d^{a_+-1}\nabla Q_{1,j} \cdot\nabla d \right.\\
&\left.+2(a_-+1)d^{a_-}\nabla  Q_{2,j} \cdot\nabla d +Q_{1,j} \Delta(d^{a_+} )+ Q_{2,j} \Delta(d^{a_-+1})\right](r_j x)
\end{split}\end{equation}
Since $\|\Omega\cap B_2\|_{C^{k,\delta}}\leq 1$, we have
\begin{equation}\label{e4.15}d(x)=P(x)+g(x),\end{equation}
where $P\in\mathbf{P}_{k}$ with $\|P\|_{L^{\infty}(\Omega\cap B_1)}\leq C$ and $g\in C^{\infty}(\Omega\cap B_2)$ with $|\nabla g|\leq C|x|^{k+\delta-1}$. Thanks to Lemma \ref{lemma2.3}, we have
\[\Delta(d^{a_+} )=\kappa d^{a_+-2} +d^{a_+-1} g_2 \]
and
\[\Delta(d^{a_-+1} )=(\kappa+2a_-)d^{a_--1} +d^{a_-} g_2\]
with $g_2\in C^{k+\delta-2}(\overline{\Omega}\cap B_1)\cap C^{\infty}(\Omega\cap B_1)$.
Therefore,
\begin{equation}\label{e4.16}\begin{split}
&\frac{r_j^2}{\zeta(r_j)r_j^{k+\delta+a_-}}\left[Q_{1,j} \Delta(d^{a_+} )+Q_{2,j} \Delta(d^{a_-+1} )-Q_{1,j} \kappa d^{a_+-2} -Q_{2,j} \kappa d^{a_--1}\right](r_jx)\\
=&\frac{r_j^2}{\zeta(r_j)r_j^{k+\delta+a_-}}\left[Q_{1,j}d^{a_+-1} g_2+Q_{2,j}(2a_-d^{a_--1} +d^{a_-} g_2 ) \right](r_jx)\\
%=&\frac{2a_-r_j^3}{\zeta(r_j)r_j^{k+\delta}}\left(\frac{d(r_j x)}{r_j}\right)^{a_--1}+m_{1,r_j}(x)
=&R_{1,r_j}(x)\left(\frac{d(r_j x)}{r_j}\right)^{a_--1}+m_{1,r_j}(x),
\end{split}\end{equation}
where $R_{1,r_j}\in\mathbf{P}_{M}$ for some $M\in\mathbb{N}$ and $|m_{1,r_j}(x)|\leq Cm(r)$ such that $m(r)\rightarrow0$ as $r\rightarrow0$.
Using \eqref{e4.15}, we deduce that
\begin{equation}\label{e4.17}\frac{r_j^2}{\zeta(r_j)r_j^{k+\delta+a_-}}\left[d^{a_+}\Delta Q_{1,j}+d^{a_-+1}\Delta Q_{2,j}\right](r_jx)=R_{2,r}(x)\left(\frac{d(r_jx)}{r_j}\right)^{a_+-1}+R_{3,r}(x)\left(\frac{d(r_jx)}{r_j}\right)^{a_-}+m_{2,r_j}(x),\end{equation}
where $R_{2,r_j},R_{3,r_j}\in\mathbf{P}_{M}$ and $|m_{2,r_j}(x)|\leq Cm(r)$.
Using again \eqref{e4.15}, we have

\begin{equation}\label{e4.18}\begin{split}
 \frac{r_j^2d^{a_+-1}}{\zeta(r_j)r_j^{k+\delta+a_-}}(\nabla Q_{1,j}\cdot\nabla d)(r_jx)
=&\frac{r_j^2d^{a_+-1}}{\zeta(r_j)r_j^{k+\delta+a_-}}(\nabla Q_{1,j}\cdot\nabla P+\nabla Q_{1,j}\cdot\nabla g)(r_jx)\\
=&R_{4,r_j}(x)\left(\frac{d(r_jx)}{r_j}\right)^{a_+-1}+m_{3,r_j}(x)
\end{split}\end{equation}
and
\begin{equation}\label{e6.1}\begin{split}
 \frac{r_j^2d^{a_-}}{\zeta(r_j)r_j^{k+\delta+a_-}}(\nabla Q_{2,j}\cdot\nabla d)(r_jx)
=&\frac{r_j^2d^{a_-}}{\zeta(r_j)r_j^{k+\delta+a_-}}(\nabla Q_{2,j}\cdot\nabla P+\nabla Q_{2,j}\cdot\nabla g)(r_jx)\\
=&R_{5,r_j}(x)\left(\frac{d(r_jx)}{r_j}\right)^{a_-}+m_{4,r_j}(x)
\end{split}\end{equation}
for some polynomials $R_{4,r_j},R_{5,r_j}\in\mathbf{P}_{M}$ and $|m_{3,r_j}(x)|\leq Cm(r)$, $|m_{4,r_j}(x)|\leq Cm(r)$.
Altogether with \eqref{e4.14}-\eqref{e6.1}, we derive that
\begin{equation}\label{e4.19}
L_{\kappa}(w_j)=\widetilde{Q}_{1,r_j}\left(\frac{d(r_jx)}{r_j}\right)^{a_+-1}+\widetilde{Q}_{2,r_j}\left(\frac{d(r_jx)}{r_j}\right)^{a_--1}+m_{r_j}(x),
\end{equation}
where $\widetilde{Q}_{1,r_j},\widetilde{Q}_{2,r_j}\in\mathbf{P}_{M}$ for some $M\in\mathbb{N}$ and $m_{r_j}(x)\rightarrow0$ as $j\rightarrow\infty$.
Moreover, Lemma \ref{lemma3.6} provides the growth bound
\begin{equation}\label{e4.20}
\|w_j\|_{L^{\infty}(B_R)}\leq CR^{k+\delta+a_-}
\end{equation}
for any $R>1$. On the other hand,  in virtue of Lemma \ref{lemma3.4}, the sequence of $\widetilde{Q}_{1,r_j},\widetilde{Q}_{2,r_j}$ are uniformly bounded in $j$, and then there exists some function $w_0$ such that $w_j\rightarrow w_0$ as $j\rightarrow\infty$. Then, by \eqref{e4.19} and \eqref{e4.20}, $w_0$ satisfies
\begin{equation}\label{e4.21}
\begin{cases}
L_{\kappa}w_0=\widetilde{Q}_{1}(x)\left(x_n\right)_+^{a_+-1}+\widetilde{Q}_{2}(x)\left(x_n\right)_+^{a_--1}&\text{in }\{x_n>0\}\\
\|w_0\|_{L^{\infty}(B_R)}\leq CR^{k+\delta+a_-}&
\end{cases}
\end{equation}
with $\widetilde{Q}_{1},\widetilde{Q}_{2}\in\mathbf{P}_{M}$ for some $M\in\mathbb{N}$. Additionally, the uniform convergence of $w_j$ and the inequality \eqref{e3.17} imply that
\begin{equation}\label{e4.22}
\|w_0\|\leq C (x_n)_+^{\frac{1}{2}+a_-}.
\end{equation}
Therefore,  Lemma \ref{lemma3.5} implies that
\[w_0=Q_1(x)x_n^{a_+}+Q_2(x)x_n^{a_-+1}\]
with $Q_1\in\mathbf{P}_{\lfloor k+\delta-a_++a_-\rfloor-1}$ and  $Q_2\in\mathbf{P}_{\lfloor k+\delta\rfloor-1}$.
Taking limits in the orthogonality condition \eqref{e4.12} as $j\rightarrow\infty$, we have
\[\int_{B_1}w_0\left(P_{1}d^{a_+}+P_{2}d^{a_-+1}\right)=0\]
for any $P_1\in\mathbf{P}_{\lfloor k+\delta-a_++a_-\rfloor-1}$ and  $P_2\in\mathbf{P}_{\lfloor k+\delta\rfloor-1}$. Thus, $w_0=0$, which is a contradiction with $\|w_0\|_{L^{\infty}(B_1)}\geq\frac{1}{2}$.

\medskip
{\em Step 4.} We now prove that $f\in C^{k-1,\delta}(\overline{\Omega}\cap B_r)$ for all $r<1$ and $k>1$.

By inductive hypothesis, we assume that
\[\frac{u}{d^{\frac{2}{2+\gamma}}}\in C^{k-2,\delta}(\overline{\Omega}\cap B_1).\]
Thanks to Lemma \ref{lemma2.3}, we have
\begin{equation}\label{e4.23}
g\in C^{k-1,\delta}(\overline{\Omega}\cap B_1),
\end{equation}
whereas $f\in C^{k-2,\delta}$ due to \eqref{e4.5}.

We first claim that
\begin{equation}\label{e4.24}
\left|D^{m}\left(\frac{u}{d^{\frac{2}{2+\gamma}}}\right)\right|\leq C d^{k+\delta-2-m}\quad\text{in }\Omega\cap B_r,
\end{equation}
for any $m<k+\delta-2$. Assume by inductive hypothesis that \eqref{e4.24} holds for any $m\leq l$, we need to show that \eqref{e4.24} holds for $m=l+1$. Arguing as Step 3, we know that for any $x_0\in\partial\Omega\cap B_1$, there exists polynomials $Q_1\in\mathbf{P}_{\lfloor k+\delta-a_++a_-\rfloor-2}$ and $Q_2\in\mathbf{P}_{\lfloor k+\delta\rfloor-2}$ such that
\begin{equation}\label{e4.25}
\left|v(x)-Q_{1}(x)d^{a_+}-Q_2(x)d^{a_-+1}\right|\leq C|x-x_0|^{k+\delta+a_--1}\quad\text{for any }x\in B_{\rho}(x_0)\cap\Omega.
\end{equation}
For any $y_0\in\Omega\cap B_{1/2}(x_0)$ such that $\mathrm{dist}(y_0)=|y_0-x_0|=2r$, we define the rescaling
\[v_r=\frac{v(rx)-Q_1(rx)d^{a_+}(rx)-Q_2(rx)d^{a_-+1}(rx)}{r^{k+\delta+a_--1}},\]
we calculate that
\begin{equation}\label{e4.26}\begin{split}
L_{\kappa}v_r=&r^{3-k-\delta}\left[\frac{1}{r}(f(y_0+rx)+g(y_0+rx))\left(\frac{d(y_0+rx)}{r}\right)^{a_--1}\right.\\
&\left.-r^{a_+-a_-}L_{\kappa}\left(Q_1(y_0+rx)\left(\frac{d(y_0+rx)}{r}\right)^{a_+}\right)
-rL_{\kappa}\left(Q_2(y_0+rx)\left(\frac{d(y_0+rx)}{r}\right)^{a_-+1}\right)\right]
\end{split}\end{equation}
Since $f+g=0$ on the boundary, we can write $\frac{1}{d}(f+g) =(f_0+g_0)$ with $(f_0+g_0)\in C^{k-3,\delta}(\overline{\Omega}\cap B_1)$. Therefore, because $d$ is comparable with $r$ near $y_0$, it follows that
\begin{equation}\label{e4.29}\begin{split}
&r^{3-k-\delta}[r^{-1}(f(y_0+rx)+g(y_0+rx))]\left(\frac{d(y_0+rx)}{r}\right)^{a_--1}\\
&=r^{3-k-\delta}[f_0(y_0+rx)+g_0(y_0+rx)]\left(\frac{d(y_0+rx)}{r}\right)^{a_--1}.
\end{split}\end{equation}
 Additionally, by Lemma \ref{lemma2.3}, we have
\begin{equation}\label{e4.27}L_{\kappa}\left(Q_1\left(\frac{d(y_0+rx)}{r}\right)^{a_+}\right)(y_0+rx)=-h_1(y_0+rx)\left(\frac{d(y_0+rx)}{r}\right)^{a_+-1}\end{equation}
and
\begin{equation}\label{e4.28}L_{\kappa}\left(Q_2\left(\frac{d(y_0+rx)}{r}\right)^{a_-+1}\right)(y_0+rx)=-h_2(y_0+rx)\left(\frac{d(y_0+rx)}{r}\right)^{a_--1}\end{equation}
for some $h_1,h_2\in C^{k-2,\delta}(\overline{\Omega}\cap B_1)$.
Altogether with \eqref{e4.26}-\eqref{e4.28}, we deduce that
\[L_{\kappa}v_r=r^{3-k-\delta}h_0(x)\left(\frac{d(y_0+rx)}{r}\right)^{a_--1},\]
where $h_0(y_0+rx)=(f_0+g_0)(y_0+rx)+( h_1d^{a_+-a_-})(y_0+rx)+rh_2(y_0+rx)$ belong to $C^{k-3,\delta}(\overline{\Omega}\cap B_1)$. Then, we obtain
\begin{equation}\label{e4.30}
\frac{-\Delta v_r}{\left(\frac{d(y_0+rx)}{r}\right)^{a_--1}}
+\frac{\kappa}{\left(\frac{d(y_0+rx)}{r}\right)^2}\cdot\frac{v_r}{\left(\frac{d(y_0+rx)}{r}\right)^{a_--1}}=r^{3-k-\delta}h_0(y_0+rx).
\end{equation}
Since
\begin{equation}\label{e4.31}\left|\partial^{l-1}h_0\right|\leq C_{l}d^{k+\delta-(l+2)},\end{equation}
the interior Schauder estimates imply that
\[\left|\partial^{l+1}v_r\right|_{L^{\infty}(B_{\frac{1}{2}})}\leq C\left(|v_r|_{L^{\infty}(B_1)}+r^{3-k-\delta}\|D^{(l+1)-2}h_0(y_0+rx)\|_{L^{\infty}(B_1)}\right).\]
that is
\[\begin{split}
&r^{-k-\delta-a_-+1+(l+1)}\left|\partial^{l+1}(v-Q_1 d^{a_+}-Q_2 d^{a_-+1})\right|_{L^{\infty}(B_{\frac{r}{2}}(y_0))}\\
\leq& C\left(1+r^{(l+1)-k-\delta+1}\|D^{(l+1)-2}h_0\|_{L^{\infty}(B_{\frac{r}{2}}(y_0))}\right)\leq C,
\end{split}\]
where we used \eqref{e4.31} in the last inequality. Therefore, we obtain
\begin{equation}\label{e4.32}
\left|\partial^{l+1}(v-Q_1 d^{a_+}-Q_2 d^{a_-+1})\right|_{L^{\infty}(B_{\frac{r}{2}}(y_0))}\leq C r^{k+\delta+a_--1-(l+1)},
\end{equation}
by Lemma \ref{lemma3.7}, we conclude that
\begin{equation}\label{e4.33}
\left|\partial^{l+1}\left(\frac{v}{d^{a_-+1}}\right)\right|_{L^{\infty}(B_{\frac{r}{2}}(y_0))}\leq Cd^{k+\delta-2-(l+1)}.
\end{equation}

Now, it remains to show that $f\in C^{k+\delta-1}$. For any $z\in\partial\Omega\cap B_r$, by inductive hypothesis for $k>1$, we have
\[\frac{u}{d^{\frac{2}{2+\gamma}}}-c_0=P_{z}(x)+O(|x-z|^{k+\delta-2}) \quad\text{in }\Omega\cap B_1,\]
where $P_z$ is some polynomials satisfying $P_{z}(z)=0$. Since $P_z(x)=O(|x-z|)$, we have
\[\left(P_{z}(x)+O(|x-z|^{k+\delta-2})\right)^2=P_z^2+O(|x-z|^{k+\delta-1}),\]
 and thus
\[\left(\frac{u}{d^{\frac{2}{2+\gamma}}}-c_0\right)^2=P_z^2+O(|x-z|^{k+\delta-1}).\]
For $k=1$, we use Proposition \ref{proposition2.3} to get
\[\frac{u}{d^{\frac{2}{2+\gamma}}}-c_0=O(|x-z|^{\delta}).\]
In both cases, we also have that
\[\int_0^1(\gamma+1)(\gamma+2)(1-\tau)\left(c_0(1-\tau)+\tau\frac{u}{d^{\frac{2}{2+\gamma}}}\right)^{-\gamma-3}d\tau\in C^{k-1,\delta} \]
and hence by \eqref{e4.5},
\[f(x)=Q_z(x)+O(|x-z|^{k+\delta-1})\]
for any $x\in \Omega\cap B_1$ and some polynomial $Q_z$ satisfying $Q_z(z)=0$. This means that $f\in C^{k-1,\delta}$ pointwise on  $\partial\Omega\cap B_r$. Next, we use the interior regularity estimates to deduce that $f\in C^{k-1,\delta}(\overline{\Omega}\cap B_r)$. Indeed, by \eqref{e4.24} and using that $|u/d^{\frac{2}{2+\gamma}}-c_0|\leq Cd$ for $k>1$, we have
\[\begin{split}
\left|D^k\left(\frac{u}{d^{\frac{2}{2+\gamma}}}-c_0\right)^2\right|\leq & C\sum_{i=0}^{k/2}\left|D^i\left(\frac{u}{d^{\frac{2}{2+\gamma}}}-c_0\right)\right|\left|D^{k-i}\left(\frac{u}{d^{\frac{2}{2+\gamma}}}-c_0\right)\right|\\
\leq&Cd^{\delta-2}d+C\sum_{i=0}^{k/2}\left|D^i\left(\frac{u}{d^{\frac{2}{2+\gamma}}}\right)\right|\left|D^{k-i}\left(\frac{u}{d^{\frac{2}{2+\gamma}}}\right)\right|\\
%\leq&Cd^{\delta-1}+C\sum_{i=1}^{k/2}(1+d^{k+\delta-2-k+i})\\
\leq&Cd^{\delta-1}
\end{split}\]
Therefore, we find
\[|D^k f|\leq C d^{\delta-1}\quad\text{in }\Omega\cap B_r,\]
and then
\[f\in C^{k-1,\delta}(\overline{\Omega}\cap B_r) \]
for $k>1$, as we wanted.

\medskip
{\em Step 5.} We conclude that \eqref{e4.2} holds for any $k>1$.

Arguing as Step 3 using \eqref{e4.10}, we get
\begin{equation}\label{e4.24}
\left|D^{m}\left(\frac{u}{d^{\frac{2}{2+\gamma}}}\right)\right|\leq C d^{k+\delta-1-m}\quad\text{in }\Omega\cap B_r,
\end{equation}
for any $m<k+\delta-1$, which implies that
\[\left|D^k\left(\frac{u}{d^{\frac{2}{2+\gamma}}}\right)\right|\leq Cd^{\delta-1}\quad\text{in }\Omega\cap B_r.\]
 We only need to show that
\begin{equation}\label{e4.35}\frac{u}{d^{\frac{2}{2+\gamma}}}\in C^{k-1,\delta}(\overline{\Omega}\cap B_r).\end{equation}
Once \eqref{e4.35} holds, we deduce that
\[D^k\left(u_i d^{\frac{\gamma}{2+\gamma}}\right)=D^k\left(d\partial_i\left(\frac{u}{d^{\frac{2}{2+\gamma}}}\right)+\frac{2}{2+\gamma}\partial_i d\cdot\frac{u}{d^{\frac{2}{2+\gamma}}}\right)\leq Cd^{\delta-1}\quad\text{in }\Omega\cap B_r.\]
This means that
\[u_i d^{\frac{\gamma}{2+\gamma}}\in C^{k-1,\delta}(\overline{\Omega}\cap B_r),\]
 which is the desired result.

In virtue of Lemma \ref{lemma3.7},  \eqref{e4.35} will follow from verifying \eqref{e3.59}.
Thanks to Lemma \ref{lemma2.3}, we have
\begin{equation}\label{e4.23}
g\in C^{k-1,\delta}(\overline{\Omega}\cap B_1),
\end{equation}
whereas $f\in C^{k-1,\delta}$ due to \eqref{e4.5}.
For any $x_0\in\partial\Omega\cap B_1$, we know that there exist  two polynomials such that \eqref{e4.10} holds. Hence, as in the proof of Step 4, for any $y_0\in\Omega\cap B_{1/2}(x_0)$ such that $\mathrm{dist}(y_0)=|y_0-x_0|=2r$, we define
\[v_r=\frac{v(rx)-Q_1(rx)d^{a_+}(rx)-Q_2(rx)d^{a_-+1}(rx)}{r^{k+\delta+a_-}}.\]
Arguing as in Step 4, we calculate that
%\begin{equation}\label{e4.26}\begin{split}
%L_{\kappa}v_r=&r^{2-k-\delta}\left[(f_0(y_0+rx)+g_0(y_0+rx))\left(\frac{d(y_0+rx)}{r}\right)^{a_--1}\right.\\
%&\left.-r^{a_+-a_-}L_{\kappa}\left(Q_1(y_0+rx)\left(\frac{d(y_0+rx)}{r}\right)^{a_+}\right)
%-rL_{\kappa}\left(Q_2(y_0+rx)\left(\frac{d(y_0+rx)}{r}\right)^{a_-+1}\right)\right]，
%\end{split}\end{equation}
%where $(f_0+g_0)=\frac{1}{r}(f+g)\in C^{k-2,\delta}$.
%By Lemma \ref{lemma2.3}, we have
%\begin{equation}\label{e4.27}L_{\kappa}\left(Q_1\left(\frac{d(y_0+rx)}{r}\right)^{a_+}\right)(y_0+rx)=-h_1(y_0+rx)\left(\frac{d(y_0+rx)}{r}\right)^{a_+-1}\end{equation}
%and
%\begin{equation}\label{e4.28}L_{\kappa}\left(Q_2\left(\frac{d(y_0+rx)}{r}\right)^{a_-+1}\right)(y_0+rx)=-h_2(y_0+rx)\left(\frac{d(y_0+rx)}{r}\right)^{a_--1}\end{equation}
%for some $h_1,h_2\in C^{k-2,\delta}(\overline{\Omega}\cap B_1)$.

%Altogether with \eqref{e4.26}-\eqref{e4.28}, we deduce that
\[L_{\kappa}v_r=r^{2-k-\delta}h_0(x)\left(\frac{d(y_0+rx)}{r}\right)^{a_--1},\]
for some $h_0$  belong to $C^{k-2,\delta}(\overline{\Omega}\cap B_1)$, namely,
\begin{equation}\label{e4.30}
\frac{-\Delta v_r}{\left(\frac{d(y_0+rx)}{r}\right)^{a_--1}}
+\frac{\kappa}{\left(\frac{d(y_0+rx)}{r}\right)^2}\cdot\frac{v_r}{\left(\frac{d(y_0+rx)}{r}\right)^{a_--1}}=r^{2-k-\delta}h_0(y_0+rx).
\end{equation}
Since $\frac{d(y_0+rx)}{r}\in C^{k,\delta}(B_1)$ and $\frac{1}{C}\leq \frac{d(y_0+rx)}{r}\leq C$, the standard interior Schauder estimates yields that
\[[v_r]_{C^{k-1,\delta}(B_{1/2})}\leq C\left(\|v_r\|_{L^{\infty}(B_1)}+r^{2-\beta}[h_0]_{C^{k-2,\delta}(B_1)}\right).\]
Rescaling back to $v$, we get
\[r^{-a_--1}[v]_{C^{k-1,\delta}(B_{r/2})}\leq C\left(1+[h_0]_{C^{k-2,\delta}(B_r(y_0))}\right)\leq C,\]
which concludes the proof.

\end{proof}

Finally, we present the proof of Theorem \ref{theorem1.1}.
\begin{proof}[Proof of Theorem \ref{theorem1.1}] Assume that $x_0=0$ and that the inner unit normal vector at $0$ is $\nu(0)=e_n$, and let $\Omega=\{u>0\}$. Since $\Delta u_i=(\gamma+1)u^{-\gamma-2}u_i$ in $\{u>0\}\cap B_1$, it follows that the function $w:=\frac{u_i}{u_n}$ satisfies
\[\mathrm{div}(u_n^2\nabla w)=0\quad\text{in }\Omega\cap B_1.\]
From Proposition \ref{proposition4.1}, we have
\[u_n^2\sim d^{s}\quad\text{and}\quad u_n^2/d^s\in C^{\delta}(B_{r_0\cap\overline{\Omega}})\text{ for any }\delta\in(0,1),\]
where $r_0>0$ and
\[s=-\frac{2\gamma}{2+\gamma}>-1.\]
Therefore, we can rewrite the equation for $w$ as
\begin{equation}\label{e4.34}
\mathrm{div}\left(a(x)d^{s}\nabla w\right)=0\quad\text{in }\Omega\cap B_{r_0},
\end{equation}
where $a(x)\in C^{\delta}(\overline{\Omega}\cap B_{1/2})$ and $\lambda\leq a(x)\leq \Lambda$ for some $\lambda,\Lambda>0$.
Using Proposition \ref{proposition2.3}, we have that
\[w=\frac{u_i}{u_n}\in C^{\delta}(B_{r_0}\cap\overline{\Omega})\]
for any $\delta\in(0,1)$. Combining with this and interior estimates, a standard argument yields $|\nabla w|\leq C d^{\delta-1}$ in $\Omega\cap B_{r_0}$. Applying the Theorem 1.1 in \cite{TTV}, there exists $r\in(0,r_0)$ such that
\[w\in C^{1,\delta}(B_r\cap\overline{\Omega})\]
with boundary condition
\[\nabla w\cdot\nu=0\quad\text{on }\partial\Omega\cap B_{r_0}.\]
The normal vector $\nu$ to $\partial\Omega$ is given by
\[\nu_i=\frac{u_i}{|\nabla u|}=\frac{u_i/u_n}{\sqrt{1+\sum_{j=1}^n(u_j/u_n)^2}},\quad i=1,\cdots,n, \]
yields that $\nu\in C^{1,\delta}$ and $\partial\Omega\in C^{2,\delta}$ (since the normal vector to the free boundary is given by a  $C^{1,\delta}$ function).

Now, since $\Omega$ is $C^{2,\alpha}$, we can apply Proposition \ref{proposition4.1} to obtain that
\[u_i d^{\frac{\gamma}{2+\gamma}}\in C^{1,\delta}(\overline{\Omega}\cap B_r),\]
and thus
\[a(x)=u_n^2/d^s\in C^{1,\delta}(\overline{\Omega}\cap B_r).\]
Using again Theorem 1.1 in \cite{TTV}, we deduce that
\[w\in C^{2,\delta}(\overline{\Omega}\cap B_r)\quad\text{for any }r\in(0,r_0).\]
Iterating this procedure, we find that
\[\partial\Omega\in C^{\infty}\quad\text{and}\quad u_id^{\frac{\gamma}{2+\gamma}}\in C^{\infty}.\]
This yields that $u/d^{\frac{2}{2+\gamma}}\in C^{\infty}$ and thus $u^{\frac{2+\gamma}{2}}=(u/d^{\frac{2}{2+\gamma}})^{\frac{2+\gamma}{2}}d\in C^{\infty}$, and the theorem is proved.

\end{proof}

\subsection*{Acknowledgements}
The authors would like to thank Dr. Daniel Restrepo for his helpful responses to some technical detail about the $C^{\infty}$ regularity for free boundary of Alt-Caffarelli-Phillips functional.
%%%%%%%%

\end{document}